\documentclass[11pt,a4paper]{article}
\usepackage{multirow}
\pdfoutput=1

\usepackage{amssymb}
\usepackage{amsmath}
\usepackage{amsfonts}
\usepackage{bbm}
\usepackage{amsthm}
\usepackage{mathrsfs}
\usepackage{hyperref}
\usepackage{color}
\usepackage[margin=2.41cm]{geometry}
\usepackage[all,cmtip]{xy}
\usepackage[utf8]{inputenc}
\usepackage{graphicx}
\usepackage{varwidth}
\usepackage{comment}
\usepackage{enumitem}

\usepackage{bm}
\usepackage{mathtools}

\usepackage{upgreek}
\usepackage{rotating}

\usepackage{tikz}
\usetikzlibrary{cd,nfold,matrix,arrows,calc,decorations.pathmorphing,fit,shapes.geometric,decorations.pathreplacing,positioning}
\tikzset{Rightarrow/.style={double equal sign distance,>={Implies},->},
triple/.style={-,preaction={draw,Rightarrow}},
quadruple/.style={preaction={draw,Rightarrow,shorten >=0pt},shorten >=1pt,-,double,double
distance=0.2pt}}

\usepackage{quiver}


\definecolor{darkred}{rgb}{0.8,0.1,0.1}
\hypersetup{
	colorlinks=true,         
	urlcolor=darkred,
	linkcolor=darkred,
	citecolor=blue,
}

\theoremstyle{plain}
\newtheorem{theo}{Theorem}[section]
\newtheorem{lem}[theo]{Lemma}
\newtheorem{propo}[theo]{Proposition}
\newtheorem{cor}[theo]{Corollary}

\theoremstyle{definition}
\newtheorem{defi}[theo]{Definition}

\newenvironment{ex}
{\pushQED{\qed}\exx}
{\popQED\endexx}

\newenvironment{rem}
{\pushQED{\qed}\remm}
{\popQED\endremm}

\newenvironment{constr}
{\pushQED{\qed}\constrr}
{\popQED\endconstrr}

\numberwithin{equation}{section}

\def\nn{\nonumber}

\def\KZ{\mathrm{KZ}}

\def\bbK{\mathbb{K}}
\def\bbR{\mathbb{R}}
\def\bbC{\mathbb{C}}
\def\bbN{\mathbb{N}}

\def\hom{\underline{\mathrm{hom}}}

\def\Sym{\mathrm{Sym}}

\def\id{\mathrm{id}}
\def\Id{\mathrm{Id}}
\def\ID{\mathrm{ID}}

\def\sf{\mathrm{sf}}
\def\ps{\mathrm{ps}}

\def\dd{\mathrm{d}}

\def\L{\mathrm{cl}}

\def\hh{\mathrm{h}}

\def\1{I}
\def\oone{\mathbbm{1}}

\def\Vec{\mathbf{Vec}}
\def\Ch{\mathbf{Ch}}

\def\dgMod{\mathbf{dgMod}}

\def\CC{\mathbf{C}}
\def\DD{\mathbf{D}}

\def\EE{\mathbf{E}}

\def\LL{\mathbf{L}}
\def\RR{\mathbf{R}}

\def\dgCat{\mathbf{dgCat}}

\def\ad{\mathrm{ad}}

\def\H{\mathcal{H}}
\def\L{\mathcal{L}}
\def\R{\mathcal{R}}
\def\O{\mathcal{O}}

\def\T{\mathsf{T}}

\def\Pol{\mathrm{Pol}}

\newcommand\und[1]{\underline{#1}}

\makeatletter
\let\@fnsymbol\@alph
\makeatother

%


\title{%
Syllepses from 3-shifted Poisson structures and second-order integration of infinitesimal 2-braidings}

\author{%
Cameron James Deverall Kemp\vspace{4mm}\\
{\small School of Mathematical Sciences, University of Nottingham,}\\
{\small University Park, Nottingham NG7 2RD, United Kingdom.}\vspace{4mm}\\
{\small \begin{tabular}{ll}
Email: & \href{mailto:cameron.kemp@nottingham.ac.uk}{\texttt{cameron.kemp@nottingham.ac.uk}}
\vspace{2mm}
\end{tabular}
}
}

\date{May 2025}


\begin{document}

\maketitle

\begin{abstract}
\noindent This paper follows on from ``Infinitesimal 2-braidings from 2-shifted Poisson structures". It is demonstrated that the hexagonators appearing at second order satisfy the requisite axioms of a braided monoidal cochain 2-category provided that the strict infinitesimal 2-braiding is totally symmetric and coherent (in Cirio and Faria Martins' sense). We show that those infinitesimal 2-braidings induced by 2-shifted Poisson structures are indeed totally symmetric and we relate coherency to the third-weight component of the Maurer-Cartan equation that a 2-shifted Poisson structure must satisfy. Furthermore, we show that 3-shifted Poisson structures and ``coboundary" 2-shifted Poisson structures induce syllepses.
\end{abstract}
\vspace{-1mm}

\paragraph*{Keywords:} syllepsis, braided monoidal 2-categories, deformation quantisation, shifted Poisson structures, Knizhnik-Zamolodchikov 2-connection, derived algebraic geometry
\vspace{-2mm}

\paragraph*{MSC 2020:} 14A30, 17B37, 18N10, 53D55
\vspace{-2mm}

\tableofcontents

\subsection{Conventions and notations}
Throughout this paper we take our ground field $\bbK$ to be a generic field of characteristic 0 and our base algebra $R$ to be a generic associative unital $\bbK$-algebra.

If we invoke the phrase ``truncation of the homs" then we are referring to the truncated tensor product $\tilde{\otimes}_R$ of two $[-1,0]$-concentrated cochain complexes $U,V\in\Ch_R^{[-1,0]}$, see \cite[(A.8b)]{Us}. In particular, this monoidal product of the base enrichment category allows us to move the hom differential $\partial$ across a product of two homotopies in a $\Ch_R^{[-1,0]}$-category. Explicitly, for two homotopies $\phi$ and $\phi'$, we have 
\begin{equation}
(\partial\phi)\phi'=\phi\,\partial\phi'\quad.
\end{equation}
Boldface is used to denote a category or highlight the term being defined whereas italics are used for emphasis. For example, a \textbf{cochain 2-category} is a $\Ch_R^{[-1,0]}$-category and ``infinitesimal 2-braidings" are defined to be \textit{pseudo}natural rather than 2-natural.

We sometimes use the notation $\equiv$ to stress that the equality holds between pseudonatural transformations.

\section{Introduction}
It is well established that braided monoidal linear categories play an active role in the study of: 3D TQFTs\footnote{For example, the Reshetikhin-Turaev construction uses modular tensor categories.} \cite{Turaev,Virelizier,Renzi}, rational 2D CFTs \cite{Fuchs,Huang}, (2+1)D topological phases of matter\footnote{Braided fusion categories model the fusion and braiding statistics of quasi-particles.} \cite{Kong} and, more specifically, topological quantum computing \cite{Delaney}. More generally, they have seen applications to broader topics within Topology and Computation \cite{Stay}. 

There are many known results and conjectures concerning the application of braided monoidal 2-categories to, for example, 4D TQFTs \cite{Neuchl}. See \cite[Section 2.1]{Schommer} for a clear and concise overview of the history of braided monoidal 2-categories and the terms involved therein. In particular, the 2-categorical context means that the braiding includes data of ``hexagonators" and one has room to study additional structures on top of the braiding such as ``syllepses"; these too have applications, e.g., in (3+1)D topological phases of matter \cite{Decoppet}.

The relevant braided monoidal linear (2-)categories we are concerned with are constructed as deformation quantisations of symmetric strict monoidal linear (2-)categories. In this vein, it was Cartier \cite{Cartier} who showed how to categorify Drinfeld's algebraic deformation quantisation results \cite{Drinfeld90}, i.e., Cartier translated it into the problem of integrating a symmetric infinitesimal braiding on a symmetric strict monoidal $\bbC$-linear category to construct a braided monoidal $\bbC[[\hbar]]$-linear category, see \cite[Theorem XX.6.1]{Kassel}. Briefly, naturality of the infinitesimal braiding $t:\otimes\Rightarrow\otimes$ guarantees the \textbf{four-term relations}:
\begin{equation}\label{eq:four-term relations}
[t_{ij},t_{ik}+t_{jk}]=0=[t_{jk},t_{ij}+t_{ik}]\qquad,\qquad|\{i,j,k\}|=3\quad,
\end{equation}
see \cite[Lemma XX.4.3]{Kassel}. Such relations \eqref{eq:four-term relations} imply flatness of the \textbf{Knizhnik-Zamolodchikov (KZ) connection}, 
\begin{equation}\label{eq:KZ connection}
\nabla_{\mathrm{KZ}}:=\hbar\sum_{1\leq i<j\leq n+1}\left(\frac{\dd z_i-\dd z_j}{z_i-z_j}\right)t_{ij}\quad,
\end{equation}
on the \textbf{configuration space of $n+1$ indistinguishable particles on the complex line}, 
\begin{equation}\label{eq:complex configuration space}
Y_{n+1}:=\{(z_1,\ldots,z_{n+1})\in\bbC^{n+1}|\mathrm{~if~}i\neq j\mathrm{~then~}z_i\neq z_j\}\quad,
\end{equation}
see \cite[Proposition XIX.2.1]{Kassel}. The Drinfeld KZ series $\Phi_\KZ(t_{12},t_{23})$ is then constructed as the non-singular factor of a parallel transport with respect to $\nabla_\mathrm{KZ}$ along a straightforward path in $Y_3$, see \cite[Theorem 20]{BRW}, and flatness is essential to proving that the pentagon (with associator $\alpha:=\Phi_\KZ(t_{12},t_{23})$) and hexagon (with braiding $\sigma:=\gamma\circ e^{\hbar\pi it}$ where $\gamma$ is the symmetric braiding) axioms are indeed satisfied.

With the goal of finding the appropriate 2-categorical analogue of the KZ connection, Cirio and Faria Martins (CM) \cite{Joao1,Joao,Joao2} considered the higher gauge theoretic concept of a ``2-connection"\footnote{See \cite{Baez} for a lucid introduction.}. Specifically, \cite[Theorem 10]{Joao1} finds necessary and sufficient conditions for their, by construction, fake flat KZ 2-connection to be 2-flat and \cite[Theorem 13]{Joao1} gives sufficient conditions for the 2-connection to descend to $X_n:=Y_n/\mathrm{S}_n$. They then abstracted from these conditions the notion of a ``Free infinitesimal 2-Yang-Baxter operator in a differential crossed module" in \cite[Definition 26]{Joao2}. They reversed the order of this investigation in their latter work \cite{Joao}, i.e., they \textit{began} with the definition of a ``strict infinitesimal 2-braiding" \cite[Definition 14]{Joao}. Importantly, defining infinitesimal 2-braidings as \textit{pseudo}natural endomorphisms of the monoidal product gave rise to obstructions to the four-term relations and these obstructions were described in \cite[(21)]{Joao}. They then provide the simple notions of a ``coherent" infinitesimal 2-braiding \cite[Definition 17]{Joao} and ``totally symmetric" infinitesimal 2-braiding \cite[Definition 18]{Joao}. Assuming the infinitesimal 2-braiding to be strict, the KZ 2-connection they construct in \cite[(58)]{Joao} is fake flat. Moreover, \cite[Theorem 23]{Joao} proves that it is 2-flat if the infinitesimal 2-braiding is also coherent and \cite[Theorem 24]{Joao} proves that it descends to $X_n:=Y_n/\mathrm{S}_n$ if the infinitesimal 2-braiding is totally symmetric. 

With $\alpha:=\Phi_\KZ(t_{12},t_{23})$ as ansatz associator and $\sigma:=\gamma\circ e^{\hbar\pi it}$ as ansatz braiding, it was found in \cite[(3.44)]{Us} that the hexagon axioms are obstructed at second order for a symmetric strict $t$. We prove herein that, assuming $t$ is totally symmetric and strict, these obstructions are hexagonators if and only if $t$ is coherent. This not only provides good indication that CM have found incredibly powerful notions for the problem of 2-categorical deformation quantisation but it also encourages the study of the 2-holonomy of their KZ 2-connection for the express purposes of constructing hexagonator series (to all orders in $\hbar$) whose second-order term coincides with such ``infinitesimal hexagonators". We will address this problem in a future work.

\subsection{Summary}
Section \ref{sec:Setup} introduces the objects of central interest, i.e., braided strictly-unital monoidal $\Ch_R^{[-1,0]}$-categories, see Definition \ref{def:bra str un mon cat}. In order to contextualise this definition, we will first need to recall essential concepts in \cite[Appendix A.2]{Us} surrounding the (closed) symmetric strict monoidal tricategory $\dgCat_R^{[-1,0],\ps}$ of $\Ch_R^{[-1,0]}$-categories. That being said, we slightly simplify terminology and notation. For instance: we will call them ``pseudonatural transformations" rather than ``$\Ch_R^{[-1,0]}$-enriched pseudo-natural transformations", we denote the homotopy components as $\xi_f$ rather than $\xi_{U,U'}[f]$, we will denote the $\circ$-composition (see Constructions \ref{con:vert comp pseudo} and \ref{con:vertcomp mods}) by juxtaposition, we introduce compact notation for whiskerings \eqref{eq:whiskering pseudo by functor} and \eqref{subeq:whisker mod by pseudo}, etc.  
 
Subsection \ref{subsec:tricat universe} introduces the important concept of ``pseudonatural isomorphisms" (see Definition \ref{def:psnat auto}) and mentions that they are preserved by juxtaposing, whiskering and monoidally composing. We also collect various structures of $\dgCat_R^{[-1,0],\ps}$ that will be necessary for definitions and constructions appearing in later sections. For example, the horizontal $*$ and vertical $\circ$ composition of pseudonatural transformations does not satisfy the exchange law\footnote{It is this fact \textit{alone} that makes $\dgCat_R^{[-1,0],\ps}$ a \textit{weak} 3-category.} but rather this is relaxed by what we call the ``exchanger" (see Construction \ref{con:comp constraint}) and we mention in Construction \ref{con:add mods} that we can $R$-linearly add pseudonatural transformations and modifications; the ``4-term relationators" of Subsection \ref{subsec:4T relationators} are constructed from both of these structures.

Subsection \ref{subs:brastrunmon cats} specifies the general definition of a braided monoidal bicategory in \cite[Appendix C]{Schommer} to our context where: we work with $\Ch^{[-1,0]}_R$-categories instead of bicategories, the monoidal product $\otimes$ is a $\Ch^{[-1,0]}_R$-functor rather than a pseudofunctor, the associator and braiding are pseudonatural isomorphisms instead of ``pseudonatural equivalences", the unitors and 2-unitors are all identities. The latter part of Definition \ref{def:strictly-unital monoidal Ch-cat} fixes the notions of a ``pentagonal strictly-unital monoidal $\Ch^{[-1,0]}_R$-category" and a ``strict monoidal $\Ch^{[-1,0]}_R$-category". Definition \ref{def:hexagonal braiding} introduces the concept of a ``hexagonally-braided" strict monoidal $\Ch^{[-1,0]}_R$-category and Remark \ref{rem:symstrict mon ChCat defi} discusses the even more special case of a symmetric strict monoidal $\Ch^{[-1,0]}_R$-category. 

Definition \ref{def:syllepsis} specifies the general notion of a symmetric syllepsis on a braided monoidal bicategory \cite[Definition C.5]{Schommer} to our context of braided strictly-unital monoidal $\Ch^{[-1,0]}_R$-categories. In Subsubsection \ref{subsub:inf syllep on symstrmon}, Remark \ref{rem:symmetrically-sylleptic symmetric strict} analyses the scenario of symmetric syllepses on symmetric strict monoidal $\Ch^{[-1,0]}_R$-categories and Definition \ref{def:infinitesimal syllepsis} introduces our notion of an ``infinitesimal syllepsis". To finish Section \ref{sec:syllepses}, Subsection \ref{subs:syllep from 3-shifted} shows that 3-shifted Poisson structures induce infinitesimal syllepses in much the same way that 2-shifted Poisson structures induce infinitesimal 2-braidings, i.e., we substitute $\pi_{n=3}^{(2)}$ for $\pi_{n=2}^{(2)}$ in \cite[(3.25a)]{Us}.

Section \ref{sec:inf2bra and coboundary syllepses} recalls the definition of a $\gamma$-equivariant semi-strict infinitesimal 2-braiding as in \cite[Definitions 3.2 and 3.6]{Us}. Actually, we will call them ``symmetric strict infinitesimal 2-braidings", see Definitions \ref{def: strict t} and \ref{def:sym t}. Definition \ref{def:Breen t} introduces the notion of a ``Breen infinitesimal 2-braiding" which is meant to formalise the order $h$ relation coming from the strict hexagonal Breen polytope axiom \eqref{eqn:strict Breen axiom} that a hexagonally-braided strict monoidal $\Ch_{\bbK[h]/(h^2)}^{[-1,0]}$-category must satisfy. We also introduce, in Subsubsection \ref{subsub:coboundary inf syllep}, our notion of a ``coboundary infinitesimal syllepsis" and show that these are induced by 2-shifted Poisson structures whose weight 2 term is coboundary. Subsection \ref{subsec:index notation} recalls the subscript index notation $t_{ij}:\otimes^n\Rightarrow\otimes^n$ from \cite[(3.32)]{Us} and then uses this notation to introduce our notions of ``left/right pre-hexagonators", upon choosing Drinfeld's KZ series $\Phi_\KZ(t_{12},t_{23})$ as our ansatz associator and $\gamma e^{\frac{h}{2}t}$ as our ansatz braiding.

Section \ref{sec:2nd order} is about showing that the hexagonators admit nontrivial second-order terms and satisfy the requisite axioms of a braided pentagonal strictly-unital monoidal $\Ch_{\bbK[h]/(h^3)}^{[-1,0]}$-category. Assuming a symmetric strict infinitesimal 2-braiding $t$, Subsubsection \ref{subsub:inf hex} constructs the ``infinitesimal hexagonators" \eqref{eq:inf hexagonators} from the 4-term relationators. 

Subsection \ref{subsec:verifying axioms} recalls CM's \cite{Joao} important notions of ``coherency" (see Definition \ref{def: coherency}) and ``total symmetry" (see Definition \ref{def: total sym}) of an infinitesimal 2-braiding. Remark \ref{rem:our t is coherent} delves into the difficulty of showing that those infinitesimal 2-braidings induced by 2-shifted Poisson structures \cite[Theorem 3.10]{Us} are indeed coherent\footnote{We will address this problem head-on in a future work where we will generalise Cartan's formula \cite[Proposition 3.6]{Poisson} to the context of commutative differential graded algebras (CDGAs) and/or construct the infinitesimal 2-braiding on the symmetric monoidal cochain 2-category $\mathsf{Ho}_2(\mathbf{wRep}_\bbK L)$ of weak representations of the $\mathbb{L}_\infty$-algebra $L$, equivariant maps and homotopies modulo 2-homotopies.}.

We demonstrate in Propositions \ref{propo: SM1/2 hold}, \ref{prop:SM3/4 hold} and \ref{propo: h^2 Breen polytope} that a coherent totally symmetric strict infinitesimal 2-braiding provides a second-order deformation quantisation of a symmetric strict monoidal $\Ch^{[-1,0]}_\bbK$-category to a braided pentagonal strictly-unital monoidal $\Ch^{[-1,0]}_{\bbK[h]/(h^3)}$-category. We then show, in Remark \ref{rem:h^2 coboundary syllepsis}, that the coboundary symmetric syllepsis of Remark \ref{rem:coboundary 2-shifted induces syllep} is also a symmetric syllepsis on the aforementioned braided pentagonal strictly-unital monoidal $\Ch^{[-1,0]}_{\bbK[h]/(h^3)}$-category.

We include Appendix \ref{app:Third-order data} for three reasons: the first is that we want to show that the pentagonator $\Pi$ is \textit{actually} nontrivial and this nontriviality begins at order $h^3$. The second reason is that we wish to determine the order $h^3$ term of the right pre-hexagonator \eqref{eq:symstr right pre-hex}. Even though we do not wish to go through the tedious exercise of showing that these third-order expansions (of the pentagonator and hexagonators) satisfy the requisite axioms, the third reason for the appendix is that we wish to show that the third-order expansion of the coboundary syllepsis satisfies its requisite axioms.

\section{Setup}\label{sec:Setup}
See \cite[Chapters 4, 11 and 12]{2D-cats} for the utmost general context concerning all the definitions and constructions appearing throughout this section.

\subsection{The closed symmetric strict monoidal tricategory of cochain 2-categories}\label{subsec:tricat universe}
This subsection begins from \cite[Appendix A.2]{Us} and only recalls the necessary details. For example, we will not recall the horizontal composition $*$ of modifications for we will not use such a composition throughout the entire paper.
\subsubsection{Pseudonatural transformations, modifications and their compositions}
In particular, we start from \cite[Definition A.5]{Us}.
\begin{defi}\label{def:pseudonatural}
Given $\Ch_R^{[-1,0]}$-functors $F,G : \CC\to\DD$, a \textbf{pseudonatural
transformation} $\xi : F\Rightarrow G$ consists of the following two pieces of data:
\begin{itemize}
\item[(i)] For each 0-cell $U\in \CC$, a 1-cell $\xi_U\in\DD[F(U),G(U)]^0$ in the $\hom$-object $\DD[F(U),G(U)]\in\Ch_R^{[-1,0]}$ which we call a \textbf{cochain component}.

\item[(ii)] For each pair of 0-cells $U,U'\in \CC$, an $R$-linear map whose output we call a \textbf{homotopy component},
\begin{flalign}\label{eqn:doubleindexedcomponents}
\xi_{(-)}\,:\, \CC[U,U']^0~\longrightarrow~\DD[F(U),G(U')]^{-1}\quad,
\end{flalign}
such that for all 1-cells $f\in\CC[U,U']^0$,
\begin{subequations}\label{eqn: dubindex ind as homotopy}
\begin{flalign}
G(f)\, \xi_U - \xi_{U'}\,F(f)\,=\partial\xi_f\quad,
\end{flalign}
and for all 2-cells $q\in \CC[U,U']^{-1}$,
\begin{flalign}\label{eq:naturality of homotopy components}
G(q)\, \xi_U - \xi_{U'}\,F(q)\,=\, \xi_{\partial q}\quad.
\end{flalign}
\end{subequations}
\end{itemize}
These two pieces of data have to satisfy the following two axioms:
\begin{itemize}
\item[(1)] For all $U\in \CC$,
\begin{subequations}
\begin{flalign}\label{eqn: homotopy kills unit}
\xi_{1_U} \,=0\quad.
\end{flalign}
\item[(2)] For all $f\in \CC[U,U']^0$ and $f'\in \CC[U',U'']^0$,
\begin{flalign}\label{eqn:dubindex splits prods}
\xi_{f' f}\,=\,\,\xi_{f'}\, F(f) + G(f')\,\xi_f\quad.
\end{flalign}
\end{subequations}
\end{itemize}
A \textbf{pseudonatural endomorphism} is one of the form $\upsilon : F\Rightarrow F$.
\end{defi}
\begin{rem}
Let us note that a pseudonatural transformation with trivial homotopy components, $\xi_f=0$, is simply a $\Ch_R^{[-1,0]}$-natural transformation.
The \textbf{zero pseudonatural transformation} $0:F\Rightarrow G$ is such an example.
\end{rem}
\begin{ex}\label{ex:identity pseudonatural transformation}
The \textbf{identity pseudonatural transformation}
$\Id_F : F\Rightarrow F : \CC\to \DD$, defined by $(\Id_F)_U =1_{F(U)}$ and
$(\Id_{F})_f=0$, is both a pseudonatural endomorphism and $\Ch_R^{[-1,0]}$-natural transformation, i.e., it is a $\Ch_R^{[-1,0]}$-natural endomorphism. We sometimes denote such identities as $1:=\Id_F$ when the context makes it clear what the $\Ch^{[-1,0]}_R$-functor $F$ is.
\end{ex}
\begin{defi}\label{defi: modification}
Given two pseudonatural transformations $\xi,\xi'\in\dgCat_R^{[-1,0],\ps}[\CC,\DD](F,G)$, a \textbf{modification} $\Xi:\xi\Rrightarrow\xi':F\Rightarrow G:\CC\rightarrow\DD$ consists of the following datum:
\begin{enumerate}
\item[(i)] For each 0-cell $U\in\CC$, a 2-cell $\Xi_U\in\DD[F(U),G(U)]^{-1}$ such that 
\begin{equation}\label{eqn: mod is homot between pseudos}
\xi_U-\xi'_U=\partial\Xi_U\quad.
\end{equation}
\end{enumerate}
This datum is subject to the single axiom:
\begin{enumerate}
\item For every 1-cell $f\in\CC[U,U']^0$,
\begin{equation}\label{eqn:mod single condition}
\Xi_{U'}F(f)+\xi_f=\xi'_f+G(f)\Xi_U\quad.
\end{equation}
\end{enumerate}
An \textbf{endomodification} $\Xi\in\mathrm{Mod}_F$ is a modification of the form $\Xi:\xi\Rrightarrow\xi':F\Rightarrow F$, i.e., it is a modification between pseudonatural endomorphisms and \textit{not} an endomorphic modification $\Theta:\theta\Rrightarrow\theta:F\Rightarrow G$. We denote by $\ID_\xi:=\mathbf{0}:\xi\Rrightarrow\xi$ the \textbf{zero modification} which is an instance of an endomorphic modification. 
\end{defi}

\begin{constr}\label{con:vert comp pseudo}
The \textbf{vertical composite pseudonatural transformation}
\begin{subequations}\label{eqn: ver comp of pseudos}
\begin{equation}
\begin{tikzcd}
	{\CC} && {\DD}
	\arrow[""{name=0, anchor=center, inner sep=0}, "G"{description}, from=1-1, to=1-3]
	\arrow[""{name=1, anchor=center, inner sep=0}, "F", curve={height=-32pt}, from=1-1, to=1-3]
	\arrow[""{name=2, anchor=center, inner sep=0}, "H"',curve={height=32pt}, from=1-1, to=1-3]
	\arrow["\xi"', shorten <=5pt, shorten >=5pt, Rightarrow, from=1, to=0]
	\arrow["\theta"', shorten <=5pt, shorten >=5pt, Rightarrow, from=0, to=2]
\end{tikzcd}~~\stackrel{\circ}{\longmapsto}~~
\begin{tikzcd}
	{\CC} && {\DD}
	\arrow[""{name=1, anchor=center, inner sep=0}, "F", curve={height=-32pt}, from=1-1, to=1-3]
	\arrow[""{name=2, anchor=center, inner sep=0}, "H"', curve={height=32pt}, from=1-1, to=1-3]
	\arrow["\theta\xi"', shorten <=5pt, shorten >=5pt, Rightarrow, from=1, to=2]
\end{tikzcd}
\end{equation}
in the $2$-category $\dgCat_R^{[-1,0],\ps}\big[\CC,\DD\big]$ is defined by:
\begin{alignat}{2}
(\theta\xi)_U\,&:=\, \theta_U\,\xi_U\quad&&,\\
(\theta \xi)_f\,&:=\, \theta_f\,\xi_U + \theta_{U'}\,\xi_f\quad&&.\label{eq:hom components of vercomp pseudos}
\end{alignat}
\end{subequations}
This composition
is associative and unital with respect to the identity 1.
\end{constr}
\begin{defi}\label{def:psnat auto}
A pseudonatural transformation $\xi:F\Rightarrow G:\CC\rightarrow\DD$ is a \textbf{pseudonatural isomorphism} if there exists another pseudonatural transformation $\xi^{-1}:G\Rightarrow F:\CC\rightarrow\DD$ such that $\xi^{-1}\xi=\Id_F$ and $\xi\,\xi^{-1}=\Id_G$. A \textbf{pseudonatural automorphism} is a pseudonatural endomorphism which is also a pseudonatural isomorphism.
\end{defi}
It is obvious from the above definition that pseudonatural isomorphisms are closed under vertical composition. 
\begin{rem}\label{rem:pseudonatural endo is auto iff}
The formulae for the vertical composition/unit in Construction \ref{con:vert comp pseudo} tells us that a pseudonatural transformation $\xi:F\Rightarrow G:\CC\rightarrow\DD$ is a pseudonatural isomorphism if and only if the cochain components $\xi_U\in\DD[F(U),G(U)]^0$ are isomorphisms $\xi_U:F(U)\cong G(U)$. In other words, it can be easily checked that setting:
\begin{subequations}\label{subeq:inverse pseudo iso formulae}
\begin{alignat}{2}
\xi^{-1}_U:=&(\xi_U)^{-1}:G(U)\cong F(U)\quad&&,\\
\xi^{-1}_f:=&-(\xi_{U'})^{-1}\xi_f(\xi_U)^{-1}\in\DD\left[G(U),F(U')\right]^{-1}\quad&&,
\end{alignat}
\end{subequations}
produces a well-defined pseudonatural transformation $\xi^{-1}:G\Rightarrow F:\CC\rightarrow\DD$ which, by construction, is the \textbf{inverse} to $\xi$. 
\end{rem}
\begin{constr}
The \textbf{horizontal composite pseudonatural transformation}
\begin{subequations}
\begin{equation}
\begin{tikzcd}
	{\CC} && {\DD} && {\EE}
	\arrow[""{name=0, anchor=center, inner sep=0}, "F", curve={height=-32pt}, from=1-1, to=1-3]
	\arrow[""{name=1, anchor=center, inner sep=0}, "G"', curve={height=32pt}, from=1-1, to=1-3]
	\arrow[""{name=2, anchor=center, inner sep=0}, "{F'}", curve={height=-32pt}, from=1-3, to=1-5]
	\arrow[""{name=3, anchor=center, inner sep=0}, "{G'}"', curve={height=32pt}, from=1-3, to=1-5]
	\arrow["\xi\,"', shorten <=6pt, shorten >=6pt, Rightarrow, from=0, to=1]
	\arrow["{\,\upsilon}"', shorten <=6pt, shorten >=6pt, Rightarrow, from=2, to=3]
\end{tikzcd}
~~\stackrel{\ast}{\longmapsto}~~
\begin{tikzcd}
	{\CC} && {\EE}
	\arrow[""{name=0, anchor=center, inner sep=0}, "F' F", curve={height=-32pt}, from=1-1, to=1-3]
	\arrow[""{name=1, anchor=center, inner sep=0}, "G' G"', curve={height=32pt}, from=1-1, to=1-3]
	\arrow["\upsilon\ast\xi\,"', shorten <=6pt, shorten >=6pt, Rightarrow, from=0, to=1]
\end{tikzcd}
\end{equation}
in the tricategory $\dgCat_R^{[-1,0],\ps}$ is defined by
\begin{alignat}{2}
(\upsilon\ast\xi)_U\,&:=\,\upsilon_{G(U)}\,F'(\xi_U)\quad&&,\\
(\upsilon\ast\xi)_f\,&:=\,\upsilon_{G(f)}\,F'(\xi_U)
+\upsilon_{G(U')}\,F'\big(\xi_f\big)\quad&&.
\end{alignat}
\end{subequations}
This composition
is associative and unital with respect to the particular identity pseudonatural transformations
$\Id_{\id_\CC}:\id_\CC\Rightarrow\id_\CC:\CC\to\CC$. 
\end{constr}
\begin{rem}
The horizontal composition of two identity pseudonatural transformations is another identity pseudonatural transformation, i.e.,
\begin{equation}\label{eq:horcomp is strictly unitary}
\Id_{F'}*\Id_F=\Id_{F'F}\quad.
\end{equation}
This fact that $\ast$ is \textit{strictly unitary} will be implicit in many constructions throughout.
\end{rem}
\begin{rem}\label{rem:whiskering pseudo by functor}
\textbf{Whiskering} pseudonatural transformations by $\Ch^{[-1,0]}_R$-functors works as usual:
\begin{equation}\label{eq:whiskering pseudo by functor}
(\upsilon*\Id_G)_U=\upsilon_{G(U)}\,\,,\,\,(\upsilon*\Id_G)_f=\upsilon_{G(f)}\quad,\quad(\Id_{F'}*\xi)_U=F'(\xi_U)\,\,,\,\,(\Id_{F'}*\xi)_f=F'(\xi_f)\,,
\end{equation}
hence pseudonatural isomorphisms are closed under whiskering. We will denote \eqref{eq:whiskering pseudo by functor} by
\begin{equation}\label{eq:notation for whiskering pseudo}
\upsilon_G:=\upsilon*\Id_G\qquad,\qquad F'(\xi):=\Id_{F'}*\xi\quad.
\end{equation}
Associativity of horizontal composition gives us $H(\upsilon_F)=H(\upsilon)_F$.
\end{rem} 
\begin{constr}\label{con:comp constraint}
Given any composable diagram of the form
\begin{subequations}\label{eqn:compositioncoherences}
\begin{equation}
\begin{tikzcd}
	{\CC} && {\DD} && {\EE}
	\arrow[""{name=0, anchor=center, inner sep=0}, "F", curve={height=-32pt}, from=1-1, to=1-3]
	\arrow[""{name=1, anchor=center, inner sep=0}, "G"{description}, from=1-1, to=1-3]
	\arrow[""{name=2, anchor=center, inner sep=0}, "H"', curve={height=32pt}, from=1-1, to=1-3]
	\arrow[""{name=3, anchor=center, inner sep=0}, "{F'}", curve={height=-32pt}, from=1-3, to=1-5]
	\arrow[""{name=4, anchor=center, inner sep=0}, "{H'}"', curve={height=32pt}, from=1-3, to=1-5]
	\arrow[""{name=5, anchor=center, inner sep=0}, "{G'}"{description}, from=1-3, to=1-5]
	\arrow["\xi"', shorten <=5pt, shorten >=5pt, Rightarrow, from=0, to=1]
	\arrow["\theta"', shorten <=5pt, shorten >=5pt, Rightarrow, from=1, to=2]
	\arrow["{\xi'}"', shorten <=5pt, shorten >=5pt, Rightarrow, from=3, to=5]
	\arrow["{\theta'}"', shorten <=5pt, shorten >=5pt, Rightarrow, from=5, to=4]
\end{tikzcd}\quad,
\end{equation}
the \textbf{exchanger} is the modification
\begin{flalign}
\ast^2_{(\theta',\theta),(\xi',\xi)}\,:\,(\theta'\ast\theta)(\xi'\ast\xi)~\Rrightarrow~\theta'\xi'\ast\theta\xi
\end{flalign}
whose components are defined by
\begin{flalign}
\big(\ast^2_{(\theta',\theta),(\xi',\xi)}\big)_U\,:=\,\theta'_{H(U)}\,\xi'_{\theta_U}F'(\xi_U)\quad.
\end{flalign}
\end{subequations}
\end{constr} 
\begin{rem}\label{rem:whiskering and exchanger}
The exchanger \eqref{eqn:compositioncoherences} is trivial if $\xi'$ is a $\Ch_R^{[-1,0]}$-natural transformation or $\theta=1$, because $\xi'_{1_{G(U)}}=0$, hence whiskering is functorial, i.e.:
\begin{subequations}
\begin{equation}
\theta'_F\xi'_F=(\theta'\xi')_F\qquad,\qquad F'(\theta)F'(\xi)=F'(\theta\xi)\quad.
\end{equation}
In addition, strict unitarity \eqref{eq:horcomp is strictly unitary} gives us:
\begin{equation}
(\xi'_G)^{-1}:=(\xi'*\Id_G)^{-1}=\xi'^{-1}*\Id_G=:\xi'^{-1}_G\quad,\quad\left(F'(\xi)\right)^{-1}:=(\Id_{F'}*\xi)^{-1}=\Id_{F'}*\xi^{-1}=:F'(\xi^{-1})\,.
\end{equation}
\end{subequations}
\end{rem}
\begin{constr}\label{con:lat comp mods}
The \textbf{lateral composite modification}
\begin{subequations}\label{eqn:lat comp mods}
\begin{equation}
\begin{tikzcd}
	{\CC} &&& {\DD}
	\arrow[""{name=0, anchor=center, inner sep=0}, "G"', curve={height=36pt}, from=1-1, to=1-4]
	\arrow[""{name=1, anchor=center, inner sep=0}, "F", curve={height=-36pt}, from=1-1, to=1-4]
	\arrow[""{name=2, anchor=center, inner sep=0}, "{\,\xi''}", curve={height=-28pt}, shorten <=8pt, shorten >=8pt, Rightarrow, from=1, to=0]
	\arrow[""{name=3, anchor=center, inner sep=0}, "\xi\,"', curve={height=28pt}, shorten <=8pt, shorten >=8pt, Rightarrow, from=1, to=0]
	\arrow[""{name=4, anchor=center, inner sep=0}, "{\xi'}"{description}, shorten <=6pt, shorten >=6pt, Rightarrow, from=1, to=0]
	\arrow["\Xi", shorten <=4pt, shorten >=4pt, triple, from=3, to=4]
	\arrow["{\Xi'}", shorten <=4pt, shorten >=4pt, triple, from=4, to=2]
\end{tikzcd}
~~\stackrel{\cdot}{\longmapsto}~~
\begin{tikzcd}
	{\CC} &&& {\DD}
	\arrow[""{name=0, anchor=center, inner sep=0}, "G"', curve={height=36pt}, from=1-1, to=1-4]
	\arrow[""{name=1, anchor=center, inner sep=0}, "F", curve={height=-36pt}, from=1-1, to=1-4]
	\arrow[""{name=2, anchor=center, inner sep=0}, "{\,\xi''}", curve={height=-28pt}, shorten <=8pt, shorten >=8pt, Rightarrow, from=1, to=0]
	\arrow[""{name=3, anchor=center, inner sep=0}, "\xi\,"', curve={height=28pt}, shorten <=8pt, shorten >=8pt, Rightarrow, from=1, to=0]
	\arrow["\Xi'\cdot\Xi", shorten <=4pt, shorten >=4pt, triple, from=3, to=2]
\end{tikzcd}
\end{equation}
in the category $\dgCat_R^{[-1,0],\ps}\big[\CC,\DD\big](F,G)$ is defined by
\begin{flalign}
(\Xi'\cdot\Xi)_U \,:=\, \Xi'_U + \Xi_U\quad.
\end{flalign}
\end{subequations}
This composition is associative, unital
with respect to $\mathbf{0}$ and invertible\footnote{Every modification is thus an isomorphic modification and $\ast$ is \textit{pseudo}functorial. Furthermore, $\dgCat_R^{[-1,0],\ps}$ is actually a weak $(3,2)$-category.} with respect to the \textbf{reverse} modifications $\overleftarrow{\Xi}:\xi'\Rrightarrow\xi:F\Rightarrow G:\CC\rightarrow\DD$ defined by
\begin{equation}\label{interior inverse modifications}
\overleftarrow{\Xi}_U:=-\Xi_U\quad.
\end{equation}
\end{constr}
\begin{constr}\label{con:vertcomp mods}
The \textbf{vertical composite modification}
\begin{subequations}\label{eqn: def ver comp mod}
\begin{equation}
\begin{tikzcd}
	{\CC} && {\DD}
	\arrow[""{name=0, anchor=center, inner sep=0}, "G"{description}, from=1-1, to=1-3]
	\arrow[""{name=1, anchor=center, inner sep=0}, "H"', curve={height=36pt}, from=1-1, to=1-3]
	\arrow[""{name=2, anchor=center, inner sep=0}, "F", curve={height=-36pt}, from=1-1, to=1-3]
	\arrow[""{name=3, anchor=center, inner sep=0}, "{\,\xi'}", curve={height=-16pt}, shorten <=5pt, shorten >=5pt, Rightarrow, from=2, to=0]
	\arrow[""{name=4, anchor=center, inner sep=0}, "\xi\,"', curve={height=16pt}, shorten <=5pt, shorten >=5pt, Rightarrow, from=2, to=0]
	\arrow[""{name=5, anchor=center, inner sep=0}, "\theta\,"', curve={height=16pt}, shorten <=5pt, shorten >=5pt, Rightarrow, from=0, to=1]
	\arrow[""{name=6, anchor=center, inner sep=0}, "{\,\theta'}", curve={height=-16pt}, shorten <=5pt, shorten >=5pt, Rightarrow, from=0, to=1]
	\arrow["\Xi", shorten <=5pt, shorten >=5pt, triple, from=4, to=3]
	\arrow["\Theta", shorten <=5pt, shorten >=5pt, triple, from=5, to=6]
\end{tikzcd}
~~\stackrel{\circ}{\longmapsto}~~
\begin{tikzcd}
	{\CC} &&& {\DD}
	\arrow[""{name=1, anchor=center, inner sep=0}, "H"', curve={height=36pt}, from=1-1, to=1-4]
	\arrow[""{name=2, anchor=center, inner sep=0}, "F", curve={height=-36pt}, from=1-1, to=1-4]
	\arrow[""{name=3, anchor=center, inner sep=0}, "{\,\theta'   \xi'}", curve={height=-20pt}, shorten <=5pt, shorten >=5pt, Rightarrow, from=2, to=1]
	\arrow[""{name=4, anchor=center, inner sep=0}, "\theta  \xi\,"', curve={height=20pt}, shorten <=5pt, shorten >=5pt, Rightarrow, from=2, to=1]
	\arrow["\Theta  \Xi", shorten <=5pt, shorten >=5pt, triple, from=4, to=3]
\end{tikzcd}
\end{equation}
in the $2$-category $\dgCat_R^{[-1,0],\ps}\big[\CC,\DD\big]$ is defined by
\begin{flalign}\label{eq:vertcomp mods}
(\Theta \Xi)_U\,:=\, \Theta_U\xi'_U + \theta_U\Xi_U\quad.
\end{flalign}
\end{subequations} 
This composition is associative and unital
with respect to the particular identity modifications
$\ID_{\Id_F} : \Id_F\Rrightarrow \Id_F : F\Rightarrow F : \CC\to \DD$.
We denote the \textbf{post-whiskering} by
\begin{subequations}\label{subeq:whisker mod by pseudo}
\begin{equation}\label{eq:post-whisker mod by pseudo}
\theta\Xi:=\ID_\theta\,\Xi
\end{equation}
and the \textbf{pre-whiskering} by
\begin{equation}\label{eq:pre-whisker mod by pseudo}
\Theta\xi':=\Theta\,\ID_{\xi'}\quad.
\end{equation}
Using this notation, we can write 
\begin{equation}\label{eq:nice vercomp mods}
\Theta\Xi:=\Theta\xi'\cdot\theta\Xi\quad.
\end{equation}
\end{subequations}
\end{constr}
\begin{defi}\label{def:isomodification}
If $\xi,\xi':F\Rightarrow G$ are pseudonatural isomorphisms then a modification between them $\Xi:\xi\Rrightarrow\xi'$ is called an \textbf{isomodification}. An \textbf{automodification} $\Xi\in\mathrm{Mod}_F^\times$ is a modification between pseudonatural automorphisms.
\end{defi}
It is obvious from the above definition that isomodifications are closed under vertical composition and whiskering by a pseudonatural isomorphism.
\begin{rem}\label{rem:inverse isomodification}
If $\Xi:\xi\Rrightarrow\xi':F\Rightarrow G$ is an isomodification then
\begin{equation}\label{eq:inverse mod}
\Xi^{-1}:=\xi^{-1}\overleftarrow{\Xi}\xi'^{-1}:\xi^{-1}\Rrightarrow\xi'^{-1}:G\Rightarrow F
\end{equation}
clearly defines the \textbf{inverse} isomodification.
\end{rem}
\subsubsection{The closed symmetric strict monoidal structure}
We now recall the closed symmetric strict monoidal structure on the tricategory $\dgCat_R^{[-1,0],\ps}$. Again, we only recall those aspects of the structure that will be relevant throughout, e.g., we will not recall the formula for the monoidal composite modification. We will be brief in describing the closedness; we will describe how to $R$-linearly add pseudonatural transformations/modifications and differentiate modifications to get pseudonatural transformations but we will not go through the tedious effort of showing that the inner hom 3-functor is right adjoint to the monoidal composite 3-functor. 
\begin{constr}
The $\Ch_R^{[-1,0]}$-category $\CC\boxtimes\DD$ is defined by: 
\begin{enumerate}
\item[(i)] Objects are simply 2-tuples which we denote by juxtaposition, i.e., $UV\in\CC\boxtimes\DD$ where $U\in\CC$ and $V\in\DD$.
\item[(ii)] The hom 2-term complexes are given by the truncated tensor product, i.e., 
\begin{equation}
f\boxtimes g:=  \begin{cases}
     0\,, & |f|=|g|=-1\\
     f\otimes_R g\,, & \mathrm{otherwise}
    \end{cases}:UV\rightarrow U'V'\quad.
\end{equation}
\item[(iii)] Cochain composition is given by the middle-four exchange, i.e.,
\begin{equation}
(f'\boxtimes g')(f\boxtimes g)=(f'f)\boxtimes(g'g)
\end{equation}
has no sign for passing $f$ through $g'$ because if they both were of degree $-1$ then the truncation would kill this composite.
\item[(iv)] Cochain units are simply given by mapping $\oone\in\bbK\subseteq R$ to $1_U\boxtimes1_V:=1_U\otimes_\bbR1_V=1_{UV}$.
\end{enumerate}
\end{constr}
To a pair of $\Ch_R^{[-1,0]}$-functors, $F : \CC\to \CC'$ and $G : \DD\to\DD' $, we assign the 
usual $\Ch_R^{[-1,0]}$-functor, 
$F\boxtimes G : \CC\boxtimes \DD\to \CC'\boxtimes \DD'$,
from \cite[Section 1.4]{Kelly}.
\begin{constr}\label{con:monprod pseudos}
The \textbf{monoidal composite pseudonatural transformation}
\begin{subequations}
\begin{equation}
\begin{tikzcd}
	{\CC} && {\CC'}
	\arrow[""{name=0, anchor=center, inner sep=0}, "F", curve={height=-32pt}, from=1-1, to=1-3]
	\arrow[""{name=1, anchor=center, inner sep=0}, "{F'}"', curve={height=32pt}, from=1-1, to=1-3]
	\arrow["\xi\,"', shorten <=6pt, shorten >=6pt, Rightarrow, from=0, to=1]
\end{tikzcd}
~~
\begin{tikzcd}
	{\DD} && {\DD'}
	\arrow[""{name=0, anchor=center, inner sep=0}, "G", curve={height=-32pt}, from=1-1, to=1-3]
	\arrow[""{name=1, anchor=center, inner sep=0}, "{G'}"', curve={height=32pt}, from=1-1, to=1-3]
	\arrow["\theta\,"', shorten <=6pt, shorten >=6pt, Rightarrow, from=0, to=1]
\end{tikzcd}
~~\stackrel{\boxtimes}{\longmapsto}~~
\begin{tikzcd}
	{\CC\boxtimes \DD} && {\CC'\boxtimes \DD'}
	\arrow[""{name=0, anchor=center, inner sep=0}, "F\,\boxtimes\,G", curve={height=-32pt}, from=1-1, to=1-3]
	\arrow[""{name=1, anchor=center, inner sep=0}, "{F'\,\boxtimes\,G'}"', curve={height=32pt}, from=1-1, to=1-3]
	\arrow["\xi\,\boxtimes\,\theta\,"', shorten <=6pt, shorten >=6pt, Rightarrow, from=0, to=1]
\end{tikzcd}
\end{equation}
in the tricategory $\dgCat_R^{[-1,0],\ps}$ is defined by
\begin{alignat}{2}
(\xi\boxtimes \theta)_{UV}\,&:=\,\xi_U\boxtimes\theta_V\quad&&,\\
(\xi\boxtimes \theta)_{f\,\boxtimes \,g}\,&:=\,
\xi_f\boxtimes G'(g)\theta_V + \xi_{U'}F(f)\boxtimes \theta_g\quad&&.
\end{alignat}
\end{subequations}
\end{constr}

\begin{rem}\label{rem:boxtimes is a strictly associative 3-functor}
The monoidal composition $\boxtimes$ is a 3-functor meaning that the exchange law and preservation of identities is upheld for all three different levels of morphisms\footnote{Thus pseudonatural isomorphisms and isomodifications are preserved under monoidal composition $\boxtimes$.\label{fnote:pseudo preserved by moncomp}} in the tricategory $\dgCat_R^{[-1,0],\ps}$, this fact will become relevant, e.g., when discussing the pentagon axiom holding at order $h^2$ in Lemma \ref{lem:pent axiom upheld}. This monoidal composition is strictly associative because of the truncation and because we regard $\Ch^{[-1,0]}_R$ as a strict monoidal category. The strict monoidal unit is the $\Ch_R^{[-1,0]}$-category $\RR\,\in\,  \dgCat_R^{[-1,0],\ps}$ consisting of: a single object $\star$, morphisms
$\RR(\star,\star):= R$ and the cochain composition given by multiplication of scalars. The symmetric braiding $\tau$ has components $\tau_{\CC,\DD}:\CC\boxtimes\DD\cong\DD\boxtimes\CC$ as the obvious $\Ch^{[-1,0]}_R$-isomorphisms, which are clearly natural and satisfy the hexagon axiom
\begin{equation}\label{eq:tau satisfies hex}
\tau_{\CC,(\DD\,\boxtimes\,\EE)}=(\id_\DD\boxtimes\tau_{\CC,\EE})(\tau_{\CC,\DD}\boxtimes\id_\EE)\quad.
\end{equation}
\end{rem}
To state various definitions we will need the notion of $R$-linear addition $+$ of pseudonatural transformations and modifications. Note that in the latter case $R$-linear addition of modifications is actually distinct from lateral composition $\cdot$, e.g., scaling a modification by\footnote{That is, $\frac{1}{25}$ of the unit of $R$.} $\frac{1}{25}$ has no correlate as a lateral composite.  
\begin{constr}\label{con:add mods}
Suppose we have modifications $\Xi:\xi\Rrightarrow\xi':F\Rightarrow G$ and $\Theta:\theta\Rrightarrow\theta':F\Rightarrow G$ then, for $r,r'\in R$, we define
\begin{subequations}
\begin{equation}
r\Xi+r'\Theta:r\xi+r'\theta\Rrightarrow z\xi'+w\theta':F\Rightarrow G
\end{equation}
by
\begin{equation}
(r\xi+r'\theta)_U:=r\xi_U+r'\theta_U\,\,,\,\,
(r\xi+r'\theta)_f:=r\xi_f+r'\theta_f\qquad,\qquad(r\Xi+r'\Theta)_U:=r\Xi_U+r'\Theta_U\,.
\end{equation}
\end{subequations}
\end{constr}
\begin{rem}
Vertical composition $\circ$, of pseudonatural transformations \eqref{eqn: ver comp of pseudos} and modifications \eqref{eqn: def ver comp mod}, is $R$-bilinear. The addition of pseudonatural transformations is unital with respect to the zero pseudonatural transformation $0:F\Rightarrow G$ and every pseudonatural transformation $\xi:F\Rightarrow G$ admits an inverse with respect to this addition given by $-\xi:F\Rightarrow G$. The particular zero modification $\bm{0}:0\Rrightarrow0:F\Rightarrow G$ is the unit of addition for the modifications and the modification $\Xi:\xi\Rrightarrow\xi'$ admits $-\Xi:-\xi\Rrightarrow-\xi'$ as an inverse for this addition.
\end{rem}
\begin{defi}\label{def:coboundary of mod}
Given a modification $\Xi:\xi\Rrightarrow\xi':F\Rightarrow G$, we define the \textbf{coboundary}\footnote{Cf. the single datum of a modification \eqref{eqn: mod is homot between pseudos}.}
\begin{equation}\label{eq:coboundary pseudonat}
\partial\Xi:\equiv\xi-\xi':F\Rightarrow G\qquad.
\end{equation}
\end{defi}

\subsection{Braided strictly-unital monoidal cochain 2-categories}\label{subs:brastrunmon cats}
As already mentioned, the associator and braiding will be defined as pseudonatural isomorphisms but we already know that pseudonatural isomorphisms are closed under vertical and monoidal (see Footnote \ref{fnote:pseudo preserved by moncomp}) composition as well as whiskering (see Remark \ref{rem:whiskering pseudo by functor}). This then implies that the pentagonator \eqref{eq:Pentagonator} and hexagonators \eqref{eq:left hexagonator}/\eqref{eq:right hexagonator} will automatically be isomodifications and admit inverses as in Remark \ref{rem:inverse isomodification}.
 
\subsubsection{The associator and pentagonator}
The following definition is our specification of the definition of a monoidal bicategory as found in \cite[Definition C.1]{Schommer}.
\begin{defi}\label{def:strictly-unital monoidal Ch-cat}
A \textbf{strictly-unital monoidal $\Ch^{[-1,0]}_R$-category} $(\CC,\otimes,I,\alpha,\Pi)$ consists of a quintuple of data:
\begin{enumerate}
\item[(i)] $\CC\in\dgCat_R^{[-1,0],\ps}$.
\item[(ii)] A $\Ch^{[-1,0]}_R$-functor $\otimes:\CC\boxtimes\CC\rightarrow\CC$ called the \textbf{monoidal product} whose object map is denoted by bracketing the juxtaposition, i.e.,
\begin{subequations}
\begin{equation}
U\times V\overset{\boxtimes}{\longmapsto}UV\overset{\otimes}{\longmapsto}(UV)\quad.
\end{equation}
Again, the truncation of the homs (and the fact that $\otimes$ preserves degrees) means that the exchange never incurs a sign, i.e.,
\begin{equation}
(f'\otimes g')(f\otimes g)=(f'f)\otimes(g'g)\quad.
\end{equation}
\end{subequations}
\item[(iii)] A $\Ch^{[-1,0]}_R$-functor $\eta:\RR\rightarrow\CC$ called the \textbf{monoidal unit} given by $\eta(\star)=I\in\CC$.
\item[(iv)] A pseudonatural isomorphism $\alpha:\otimes\,(\otimes\,\boxtimes\,\id_\CC)\Rightarrow\otimes\,(\id_\CC\,\boxtimes\,\otimes):\CC\boxtimes\CC\boxtimes\CC\rightarrow\CC$ called the \textbf{associator}.
\item[(v)] An isomodification\footnote{Note that we implicitly use the fact that $\boxtimes$ is a 3-functor to write things such as $$\otimes\,(\id_\CC\boxtimes\otimes)\,(\otimes\boxtimes\id_\mathbf{C\,\boxtimes\,C})=\otimes\,(\otimes\boxtimes\otimes)=\otimes\,(\otimes\boxtimes\id_\CC)\,(\id_\mathbf{C\,\boxtimes\,C}\boxtimes\otimes)\quad.$$},
\[\begin{tikzcd}[column sep=0.1in,row sep=0.2in]
	&& {\otimes\,(\otimes\boxtimes\id_\CC)\,(\otimes\boxtimes\id_\mathbf{C\,\boxtimes\,C})} \\
	{\otimes\,(\otimes\boxtimes\id_\CC)\,(\id_\CC\boxtimes\otimes\boxtimes\id_\CC)} &&&& {\otimes\,(\id_\CC\boxtimes\otimes)\,(\otimes\boxtimes\id_\mathbf{C\,\boxtimes\,C})} \\
	\\
	{\otimes\,(\id_\CC\boxtimes\otimes)\,(\id_\CC\boxtimes\otimes\boxtimes\id_\CC)} &&&& {\otimes\,(\id_\CC\boxtimes\otimes)\,(\id_\mathbf{C\,\boxtimes\,C}\boxtimes\otimes)}
	\arrow["{\otimes\,(\alpha\,\boxtimes\,1)}"', Rightarrow, from=1-3, to=2-1]
	\arrow["{\alpha_{\otimes\,\boxtimes\,\id_{\mathbf{C\,\boxtimes\,C}}}}", Rightarrow, from=1-3, to=2-5]
	\arrow["{\alpha_{\id_\CC\,\boxtimes\,\otimes\,\boxtimes\,\id_\CC}}"', Rightarrow, from=2-1, to=4-1]
	\arrow["{\alpha_{\id_{\mathbf{C\,\boxtimes\,C}}\,\boxtimes\,\otimes}}", Rightarrow, from=2-5, to=4-5]
	\arrow["\Pi", shorten <=25pt, shorten >=15pt, Rightarrow, scaling nfold=3, from=4-1, to=2-5]
	\arrow["{\otimes\,(1\,\boxtimes\,\alpha)}"', Rightarrow, from=4-1, to=4-5]
\end{tikzcd}\]
called the \textbf{pentagonator},
\begin{subequations}
\begin{equation}\label{eq:Pentagonator}
\Pi:\left[\otimes(1\boxtimes\alpha)\right]\alpha_{\id_\CC\,\boxtimes\,\otimes\,\boxtimes\,\id_\CC}\left[\otimes(\alpha\boxtimes1)\right]\Rrightarrow\alpha_{\id_{\mathbf{C\boxtimes C}}\,\boxtimes\,\otimes}\,\alpha_{\otimes\,\boxtimes\,\id_{\mathbf{C\boxtimes C}}}\,.
\end{equation}
For every $UVWX\in\CC^{\,\boxtimes\,4}$, we must have
\begin{equation}\label{eq:pentagon as cochain}
(1_U\otimes\alpha_{VWX})\alpha_{U(VW)X}(\alpha_{UVW}\otimes1_X)-\alpha_{UV(WX)}\alpha_{(UV)WX}=\partial\Pi_{UVWX}\quad.
\end{equation}
For every $f\boxtimes g\boxtimes k\boxtimes x\in\CC^{\,\boxtimes\,4}[UVWX,U'V'W'X']^0$, we must have
\begin{flalign}
&\Pi_{U'V'W'X'}\left[\big((f\otimes g)\otimes k\big)\otimes x\right]+(f\otimes\alpha_{g\,\boxtimes\,k\,\boxtimes\,x})\alpha_{U(VW)X}(\alpha_{UVW}\otimes1_X)\nn\\&+(1_{U'}\otimes\alpha_{V'W'X'})\alpha_{f\,\boxtimes\,(g\,\otimes\,k)\,\boxtimes\,x}(\alpha_{UVW}\otimes1_X)+(1_{U'}\otimes\alpha_{V'W'X'})\alpha_{U'(V'W')X'}(\alpha_{f\,\boxtimes\,g\,\boxtimes\,k}\otimes x)
\nn\\&=\nn\\
&\alpha_{f\,\boxtimes\,g\,\boxtimes\,(k\,\otimes\,x)}\alpha_{(UV)WX}+\alpha_{U'V'(W'X')}\alpha_{(f\,\otimes\,g)\,\boxtimes\,k\,\boxtimes\,x}+\left[f\otimes\big(g\otimes(k\otimes x)\big)\right]\Pi_{UVWX}\label{eq:pentagon as homotopy}
\end{flalign}
\end{subequations}
\end{enumerate}
This data is required to satisfy the following four axioms:

1. \textbf{Associahedron}, i.e., the following two pasting diagrams in $\CC$ are equivalent:
\[\begin{tikzcd}
	{\big((((UV)W)X)Y\big)} &&& {\big(((UV)W)(XY)\big)} &&& {\big((UV)(W(XY))\big)} \\
	\\
	{\big(((U(VW))X)Y\big)} &&& {\big((U(VW))(XY)\big)} \\
	&&& {\big(U((VW)(XY))\big)} &&& {\big(U(V(W(XY)))\big)} \\
	{\big((U((VW)X))Y\big)} &&& {\big(U(((VW)X)Y)\big)} \\
	\\
	{\big((U(V(WX)))Y\big)} &&& {\big(U((V(WX)))Y)\big)} &&& {\big(U(V((WX)Y))\big)}
	\arrow["{\alpha_{((UV)W)XY}}", from=1-1, to=1-4]
	\arrow["{(\alpha_{UVW}\otimes1_X)\otimes1_Y}"', from=1-1, to=3-1]
	\arrow["{\alpha_{(UV)W(XY)}}", from=1-4, to=1-7]
	\arrow["{\alpha_{UVW}\otimes1_{(XY)}}", from=1-4, to=3-4]
	\arrow["{\alpha_{UV(W(XY))}}", from=1-7, to=4-7]
	\arrow["{-\alpha_{\alpha_{UVW}\,\boxtimes\,1_{XY}}}", Rightarrow, from=3-1, to=1-4]
	\arrow["{\alpha_{(U(VW))XY}}", from=3-1, to=3-4]
	\arrow["{\alpha_{U(VW)X}\otimes1_Y}"', from=3-1, to=5-1]
	\arrow["{\alpha_{U(VW)(XY)}}"', from=3-4, to=4-4]
	\arrow["{\Pi_{UVW(XY)}}"{description}, Rightarrow, from=4-4, to=1-7]
	\arrow["{1_U\otimes\alpha_{VW(XY)}}", from=4-4, to=4-7]
	\arrow["{\Pi_{U(VW)XY}}"{description}, curve={height=-18pt}, Rightarrow, from=5-1, to=3-4]
	\arrow["{\alpha_{U((VW)X)Y}}", from=5-1, to=5-4]
	\arrow["{(1_U\otimes\alpha_{VWX})\otimes1_Y}"', from=5-1, to=7-1]
	\arrow["{1_U\otimes\alpha_{(VW)XY}}", from=5-4, to=4-4]
	\arrow["{1_U\otimes(\alpha_{VWX}\otimes1_Y)}", from=5-4, to=7-4]
	\arrow["{-\alpha_{1_U\,\boxtimes\,\alpha_{VWX}\,\boxtimes\,1_Y}}"{description}, Rightarrow, from=7-1, to=5-4]
	\arrow["{\alpha_{U(V(WX))Y}}"', from=7-1, to=7-4]
	\arrow["{1_U\otimes\alpha_{V(WX)Y}}"', from=7-4, to=7-7]
	\arrow["{1_{1_U}\otimes\Pi_{VWXY}}"', Rightarrow, from=7-7, to=4-4]
	\arrow["{1_U\otimes(1_V\otimes\alpha_{WXY})}"', from=7-7, to=4-7]
\end{tikzcd}\]
and
\[\begin{tikzcd}
	{\big((((UV)W)X)Y\big)} &&& {\big(((UV)W)(XY)\big)} &&& {\big((UV)(W(XY))\big)} \\
	\\
	{\big(((U(VW))X)Y\big)} &&& {\big((UV)((WX)Y)\big)} \\
	&&&&&& {\big(U(V(W(XY)))\big)} \\
	{\big((U((VW)X))Y\big)} &&& {\big(((UV)(WX))Y\big)} \\
	\\
	{\big((U(V(WX)))Y\big)} &&& {\big(U((V(WX)))Y)\big)} &&& {\big(U(V((WX)Y))\big)}
	\arrow["{\alpha_{((UV)W)XY}}", from=1-1, to=1-4]
	\arrow["{(\alpha_{UVW}\otimes1_X)\otimes1_Y}"', from=1-1, to=3-1]
	\arrow[""{name=0, anchor=center, inner sep=0}, "{\alpha_{(UV)WX}\otimes1_Y}"', from=1-1, to=5-4]
	\arrow["{\alpha_{(UV)W(XY)}}", from=1-4, to=1-7]
	\arrow[""{name=1, anchor=center, inner sep=0}, "{\alpha_{UV(W(XY))}}", from=1-7, to=4-7]
	\arrow[""{name=2, anchor=center, inner sep=0}, "{\alpha_{U(VW)X}\otimes1_Y}"', from=3-1, to=5-1]
	\arrow["{1_{(UV)}\otimes\alpha_{WXY}}"{description}, from=3-4, to=1-7]
	\arrow[""{name=3, anchor=center, inner sep=0}, "{\alpha_{UV((WX)Y)}}"{description}, from=3-4, to=7-7]
	\arrow["{(1_U\otimes\alpha_{VWX})\otimes1_Y}"', from=5-1, to=7-1]
	\arrow["{\alpha_{(UV)(WX)Y}}"{description}, from=5-4, to=3-4]
	\arrow["{\alpha_{UV(WX)}\otimes1_Y}"{description}, from=5-4, to=7-1]
	\arrow["{\alpha_{U(V(WX))Y}}"', from=7-1, to=7-4]
	\arrow["{1_U\otimes\alpha_{V(WX)Y}}"', from=7-4, to=7-7]
	\arrow["{1_U\otimes(1_V\otimes\alpha_{WXY})}"', from=7-7, to=4-7]
	\arrow["{\Pi_{(UV)WXY}}"{description}, Rightarrow, from=0, to=1-4]
	\arrow["{\Pi_{UVWX}\otimes1_{1_Y}}"{description}, Rightarrow, from=2, to=5-4]
	\arrow["{\alpha_{1_{UV}\,\boxtimes\,\alpha_{WXY}}}"{description}, curve={height=-30pt}, shorten <=10pt, shorten >=10pt, Rightarrow, from=3, to=1]
	\arrow["{\Pi_{UV(WX)Y}}"{description}, shorten >=7pt, Rightarrow, from=7-4, to=3]
\end{tikzcd}\]
As an equality between homotopies, this reads,
\begin{align}
&(1_U\otimes\alpha_{VW(XY)})\Pi_{U(VW)XY}\left([\alpha_{UVW}\otimes1_X]\otimes1_Y\right)-\left(1_U\otimes\alpha_{VW(XY)}\right)\alpha_{U(VW)(XY)}\alpha_{\alpha_{UVW}\,\boxtimes\,1_{XY}}\nn\\&+(1_U\otimes\Pi_{VWXY})\alpha_{U((VW)X)Y}\left(\alpha_{U(VW)X}[\alpha_{UVW}\otimes1_X]\otimes1_Y\right)+\Pi_{UVW(XY)}\alpha_{((UV)W)XY}\nn\\&-\left(1_U\otimes[1_V\otimes\alpha_{WXY}]\alpha_{V(WX)Y}\right)\alpha_{1_U\,\boxtimes\,\alpha_{VWX}\,\boxtimes\,1_Y}\left(\alpha_{U(VW)X}[\alpha_{UVW}\otimes1_X]\otimes1_Y\right)\nn\\&=\label{eq:associahedron}\\&\left(1_U\otimes[1_V\otimes\alpha_{WXY}]\alpha_{V(WX)Y}\right)\alpha_{U(V(WX))Y}\left(\Pi_{UVWX}\otimes1_Y\right)+\alpha_{UV(W(XY))}\Pi_{(UV)WXY}\nn\\&+\left(1_U\otimes[1_V\otimes\alpha_{WXY}]\right)\Pi_{UV(WX)Y}\left(\alpha_{(UV)WX}\otimes1_Y\right)+\alpha_{1_{UV}\,\boxtimes\,\alpha_{WXY}}\alpha_{(UV)(WX)Y}(\alpha_{(UV)WX}\otimes1_Y)\nn
\end{align}
2. \textbf{Unitality}, i.e., the monoidal product $\otimes$ is a retraction of both the left unit $\eta\boxtimes\id_\CC$ and the right unit $\id_\CC\boxtimes\eta$,
\begin{subequations}
\begin{equation}\label{eq:Unitality of monoidal product}
\begin{tikzcd}
	{\CC\boxtimes\CC} \\
	\\
	{\RR\boxtimes\CC=\CC=\CC\boxtimes\RR}
	\arrow["\otimes"{description}, from=1-1, to=3-1]
	\arrow["{\id_\CC\,\boxtimes\,\eta}"', shift right=5, curve={height=18pt}, from=3-1, to=1-1]
	\arrow["{\eta\,\boxtimes\,\id_\CC}", shift left=5, curve={height=-18pt}, from=3-1, to=1-1]
\end{tikzcd}\qquad,\qquad\otimes\,(\eta\boxtimes\id_\CC)=\id_\CC=\otimes\,(\id_\CC\boxtimes \eta)\quad.
\end{equation}
In terms of objects and morphisms, this demands:
\begin{equation}\label{eq:unitality in terms of object/morphism}
(I U)=U=(UI)\qquad,\qquad1_I\otimes f=f=f\otimes1_I\quad.
\end{equation}
\end{subequations}
3. \textbf{Triangle}; the associator $\alpha$ is a retraction of the middle insertion of the unit $\Id_{\id_\CC\,\boxtimes\,\eta\,\boxtimes\,\id_\CC}$,
\begin{subequations}
\begin{equation}
\alpha_{\id_\CC\,\boxtimes\,\eta\,\boxtimes\,\id_\CC}=\Id_\otimes:\otimes\,(\otimes\boxtimes\id_\CC)\,(\id_\CC\boxtimes\eta\boxtimes\id_\CC)=\otimes:\CC\boxtimes\CC\rightarrow\CC\quad.
\end{equation}
In terms of objects and morphisms, this demands:
\begin{equation}\label{eq:triangle as cochain and homotopy}
\alpha_{UI V}=1_{(UV)}\qquad,\qquad\alpha_{f\,\boxtimes\,1_I\,\boxtimes\,g}=0\quad.
\end{equation}
\end{subequations}
4. \textbf{Prism}; the pentagonator $\Pi$ is a retraction of both middle insertions of the unit, 
\begin{equation}\label{eq:Prism}
\Pi_{UI WX}=0=\Pi_{UVI X}\quad.
\end{equation}
If the pentagon axiom is upheld and the pentagonator $\Pi$ is trivial then we say $(\CC,\otimes,I,\alpha)$ is a \textbf{pentagonal strictly-unital monoidal $\Ch^{[-1,0]}_R$-category}.
If, in addition, the monoidal product is associative\footnote{In terms of objects and morphisms, this reads as:
\begin{equation}
((UV)W)=:(UVW):=(U(VW))\qquad,\qquad(f\otimes g)\otimes k=:f\otimes g\otimes k:=f\otimes(g\otimes k)\quad.
\end{equation}} $\otimes(\otimes\boxtimes\id_\CC)=\otimes(\id_\CC\boxtimes\otimes)$ and the associator is trivial $\alpha=\Id_{\otimes\,(\otimes\,\boxtimes\,\id_\CC)}$ then we say $(\CC,\otimes,I)$ is a \textbf{strict monoidal $\Ch^{[-1,0]}_R$-category}.
\end{defi}
\begin{rem}
In the familiar context of an ordinary monoidal 1-category $(\CC,\otimes,I,\alpha,\lambda,\rho)$, the triangle axiom
\begin{subequations}
\begin{equation}
\begin{tikzcd}
	{(U\otimes I)\otimes W} && {U\otimes(I\otimes W)} \\
	& {U\otimes W}
	\arrow["{\alpha_{UI W}}", from=1-1, to=1-3]
	\arrow["{\rho_U\,\otimes\,1_W}"', from=1-1, to=2-2]
	\arrow["{1_U\,\otimes\,\lambda_W}", from=1-3, to=2-2]
\end{tikzcd}
\end{equation}
implies both the ``left" triangle identity
\begin{equation}
\begin{tikzcd}
	{(I\otimes V)\otimes W} && {I\otimes(V\otimes W)} \\
	& {V\otimes W}
	\arrow["{\alpha_{I VW}}", from=1-1, to=1-3]
	\arrow["{\lambda_V\,\otimes\,1_W}"', from=1-1, to=2-2]
	\arrow["{\lambda_{V\otimes W}}", from=1-3, to=2-2]
\end{tikzcd}
\end{equation}
and the ``right" triangle identity
\begin{equation}
\begin{tikzcd}
	{(U\otimes V)\otimes I} && {U\otimes(V\otimes I)} \\
	& {U\otimes V}
	\arrow["{\alpha_{UVI}}", from=1-1, to=1-3]
	\arrow["{\rho_{U\otimes V}}"', from=1-1, to=2-2]
	\arrow["{1_U\,\otimes\,\rho_V}", from=1-3, to=2-2]
\end{tikzcd}
\end{equation}
\end{subequations}
The proof of this usually relies on the naturality of the associator; for example, see the diagrammatic proof of \cite[Lemma XI.2.2]{Kassel}. In the case of Definition \ref{def:strictly-unital monoidal Ch-cat}, the triangle axiom \eqref{eq:triangle as cochain and homotopy} still provides these left and right triangle identities despite the fact that the associator $\alpha$ is a \textit{pseudo}natural isomorphism. This is in large part because of strict unitality \eqref{eq:unitality in terms of object/morphism} and the fact that the pentagon axiom holds if one of the middle tensorands is the monoidal unit \eqref{eq:Prism}. Let us show this for the left triangle identity by plugging in $U=V=I$ into \eqref{eq:pentagon as cochain},
\begin{subequations}
\begin{equation}
\alpha_{II(WX)}\alpha_{(II)WX}=(1_I\otimes\alpha_{I WX})\alpha_{I(I W)X}(\alpha_{II W}\otimes1_X)\quad\implies\quad\alpha_{I WX}=1_{(WX)}\quad,
\end{equation}
and $f=g=1_I$ into \eqref{eq:pentagon as homotopy},
\begin{equation}
\alpha_{(1_I\,\otimes\,1_I)\,\boxtimes\,k\,\boxtimes\,x}=1_I\otimes\alpha_{1_I\,\boxtimes\,k\,\boxtimes\,x}+\alpha_{1_I\,\boxtimes\,(1_I\,\otimes\,k)\,\boxtimes\,x}\quad\implies\quad\alpha_{1_I\,\boxtimes\,k\,\boxtimes\,x}=0\quad.
\end{equation}
\end{subequations}
By plugging $W=X=I$ into \eqref{eq:pentagon as cochain} and $k=x=1_I$ into \eqref{eq:pentagon as homotopy} one shows the right triangle identity also holds hence the three 2-unitors in \cite[Definition C.1]{Schommer} are all trivial. 

Furthermore, we can plug $U=V=I$ into \eqref{eq:associahedron}. We use the fact that all three aforementioned triangle identities hold together with \eqref{eq:Prism} to get
\begin{equation}
2\Pi_{I WXY}=\Pi_{I WXY}\qquad\implies\qquad\Pi_{I WXY}=0\quad.
\end{equation}
Plugging $X=Y=I$ into \eqref{eq:associahedron} would likewise give use that $\Pi_{UVWI}=0$. 

In summary, the prismatic axioms \cite[Axiom SM2 of Definition C.1]{Schommer} indeed hold because: both the left and right unitor are identities, all three 2-unitors are trivial, the pentagonator is trivial if one of its subscripts is $I$, the associator homotopy components of the unitors (which are unit 1-cells) vanishes as in \eqref{eqn: homotopy kills unit}.
\end{rem}
\subsubsection{The braiding and hexagonators}
The following definition is our specification of the definition of a braiding on a monoidal bicategory as found in \cite[Definition C.2]{Schommer}.
\begin{defi}\label{def:bra str un mon cat}
A \textbf{braided strictly-unital monoidal $\Ch_R^{[-1,0]}$-category} consists of a strictly-unital monoidal $\Ch_R^{[-1,0]}$-category $(\CC,\otimes,I,\alpha,\Pi)$ together with a triple $(\sigma,\H^L,\H^R)$ of data where:
\begin{enumerate}
\item[(i)] The \textbf{braiding} $\sigma:\otimes\Rightarrow\otimes\,\tau_{\CC,\CC}:\CC\boxtimes\CC\rightarrow\CC$ is a pseudonatural isomorphism.
\item[(ii)] The \textbf{left hexagonator}\footnote{Note that we implicitly use the fact that $\tau$ is a natural transformation to write things such as $$\tau_{\CC,\CC}\,(\id_\CC\boxtimes\otimes)=(\otimes\boxtimes\id_\CC)\tau_{\CC,(\CC\,\boxtimes\,\CC)}$$ and we also use the fact that $\tau$ is a symmetric braiding hence satisfies the hexagon axiom \eqref{eq:tau satisfies hex}.},
\[\begin{tikzcd}[column sep=0.1in,row sep=0.2in]
	& {\otimes(\id_{\CC}\boxtimes\otimes)} && {\otimes(\otimes\boxtimes\id_{\CC})\tau_{\mathbf{C,(C\,\boxtimes\,C)}}} \\
	\\
	{\otimes(\otimes\boxtimes\id_{\CC})} &&&& {\otimes(\id_{\CC}\boxtimes\otimes)\tau_{\mathbf{C,(C\,\boxtimes\,C)}}} \\
	\\
	& {\otimes(\otimes\tau_{\mathbf{C,C}}\boxtimes\id_{\CC})} && {\otimes(\id_{\CC}\boxtimes\otimes)(\tau_{\mathbf{C,C}}\boxtimes\id_{\CC})}
	\arrow[""{name=0, anchor=center, inner sep=0}, "{\sigma_{\id_{\CC}\,\boxtimes\,\otimes}}", Rightarrow, from=1-2, to=1-4]
	\arrow["{\alpha_{\tau_{\mathbf{C,(C\,\boxtimes\,C)}}}}", Rightarrow, from=1-4, to=3-5]
	\arrow["\alpha", Rightarrow, from=3-1, to=1-2]
	\arrow["{\otimes(\sigma\,\boxtimes\,1)}"', Rightarrow, from=3-1, to=5-2]
	\arrow[""{name=1, anchor=center, inner sep=0}, "{\alpha_{\tau_{\mathbf{C,C}}\,\boxtimes\,\id_{\CC}}}", curve={height=-15pt}, Rightarrow, from=5-2, to=5-4]
	\arrow["{\otimes(1\,\boxtimes\,\sigma)_{\tau_{\mathbf{C,C}}\,\boxtimes\,\id_{\CC}}}"', Rightarrow, from=5-4, to=3-5]
	\arrow["{\H^L}", shorten <=20pt, shorten >=20pt, Rightarrow, scaling nfold=3, from=0, to=1]
\end{tikzcd}\]
\begin{subequations}
is an isomodification,
\begin{equation}\label{eq:left hexagonator}
\H^L:\alpha_{\tau_{\mathbf{C,(C\boxtimes C)}}}\,\sigma_{\id_{\CC}\,\boxtimes\,\otimes}\,\alpha\Rrightarrow\left(\big[\otimes(1\boxtimes\sigma)\big]\alpha\right)_{\tau_{\mathbf{C,C}}\,\boxtimes\,\id_{\CC}}\left[\otimes(\sigma\boxtimes1)\right]\,.
\end{equation}
The following diagrams in $\CC$ will appear throughout the enumeration of the axioms:
\begin{equation}\label{eq:left hexagonator as 2-cell}
\begin{tikzcd}
	& {(U(VW))} && {((VW)U)} \\
	{((UV)W)} &&&& {(V(WU))} \\
	& {((VU)W)} && {(V(UW))}
	\arrow[""{name=0, anchor=center, inner sep=0}, "{\sigma_{U(VW)}}", from=1-2, to=1-4]
	\arrow["{\alpha_{VWU}}", from=1-4, to=2-5]
	\arrow["{\alpha_{UVW}}", from=2-1, to=1-2]
	\arrow["{\sigma_{UV}\,\otimes\,1_W}"', from=2-1, to=3-2]
	\arrow[""{name=1, anchor=center, inner sep=0}, "{\alpha_{VUW}}"', from=3-2, to=3-4]
	\arrow["{1_V\,\otimes\,\sigma_{UW}}"', from=3-4, to=2-5]
	\arrow["{\H^L_{UVW}}", shorten <=13pt, shorten >=13pt, Rightarrow, from=0, to=1]
\end{tikzcd}
\end{equation}
\begin{equation}
\begin{tikzcd}
	& {(U(VW))} && {((VW)U)} \\
	{((UV)W)} &&&& {(V(WU))} \\
	& {((VU)W)} && {(V(UW))}
	\arrow["{\alpha^{-1}_{UVW}}"', from=1-2, to=2-1]
	\arrow[""{name=0, anchor=center, inner sep=0}, "{\sigma^{-1}_{U(VW)}}"', from=1-4, to=1-2]
	\arrow["{\alpha^{-1}_{VWU}}"', from=2-5, to=1-4]
	\arrow["{1_V\,\otimes\,\sigma^{-1}_{UW}}", from=2-5, to=3-4]
	\arrow["{\sigma^{-1}_{UV}\,\otimes\,1_W}", from=3-2, to=2-1]
	\arrow[""{name=1, anchor=center, inner sep=0}, "{\alpha^{-1}_{VUW}}", from=3-4, to=3-2]
	\arrow["{(\H^L)^{-1}_{UVW}}", shorten <=13pt, shorten >=13pt, Rightarrow, from=0, to=1]
\end{tikzcd}
\end{equation}
\end{subequations}
\item[(iii)] The \textbf{right hexagonator},
\[\begin{tikzcd}[column sep=0.1in,row sep=0.2in]
	& {\otimes(\otimes\boxtimes\id_{\CC})} && {\otimes(\id_\CC\boxtimes\otimes)\tau_{\mathbf{(C\,\boxtimes\,C),C}}} \\
	\\
	{\otimes(\id_\CC\boxtimes\otimes)} &&&& {\otimes(\otimes\boxtimes\id_{\CC})\tau_{\mathbf{(C\,\boxtimes\,C),C}}} \\
	\\
	& {\otimes(\id_\CC\boxtimes\otimes\tau_{\mathbf{C,C}})} && {\otimes(\otimes\boxtimes\id_\CC)(\id_\CC\boxtimes\tau_{\mathbf{C,C}})}
	\arrow[""{name=0, anchor=center, inner sep=0}, "{\sigma_{\otimes\,\boxtimes\,\id_\CC}}", Rightarrow, from=1-2, to=1-4]
	\arrow["{\alpha^{-1}_{\tau_{\mathbf{(C\,\boxtimes\,C),C}}}}", Rightarrow, from=1-4, to=3-5]
	\arrow["\alpha^{-1}", Rightarrow, from=3-1, to=1-2]
	\arrow["{\otimes(1\,\boxtimes\,\sigma)}"', Rightarrow, from=3-1, to=5-2]
	\arrow[""{name=1, anchor=center, inner sep=0}, "{\alpha^{-1}_{\id_\CC\,\boxtimes\,\tau_{\mathbf{C,C}}}}", curve={height=-14pt}, Rightarrow, from=5-2, to=5-4]
	\arrow["{\otimes(\sigma\,\boxtimes\,1)_{\id_\CC\,\boxtimes\,\tau_{\mathbf{C,C}}}}"', Rightarrow, from=5-4, to=3-5]
	\arrow["{\H^R}", shorten <=20pt, shorten >=20pt, Rightarrow, scaling nfold=3, from=0, to=1]
\end{tikzcd}\]
\begin{subequations}
is an isomodification,
\begin{equation}\label{eq:right hexagonator}
\H^R:\alpha^{-1}_{\tau_{\mathbf{(C\,\boxtimes\,C),C}}}\,\sigma_{\otimes\,\boxtimes\,\id_\CC}\,\alpha^{-1}\Rrightarrow\left(\big[\otimes(\sigma\boxtimes1)\big]\alpha^{-1}\right)_{\id_\CC\,\boxtimes\,\tau_{\mathbf{C,C}}}\left[\otimes(1\boxtimes\sigma)\right]\,.
\end{equation}
The following diagrams in $\CC$ will also appear throughout the enumeration of the axioms:
\begin{equation}\label{eq:right hexagonator as 2-cell}
\begin{tikzcd}
	& {((UV)W)} && {(W(UV))} \\
	{(U(VW))} &&&& {((WU)V)} \\
	& {(U(WV))} && {((UW)V)}
	\arrow[""{name=0, anchor=center, inner sep=0}, "{\sigma_{(UV)W}}", from=1-2, to=1-4]
	\arrow["{\alpha^{-1}_{WUV}}", from=1-4, to=2-5]
	\arrow["{\alpha^{-1}_{UVW}}", from=2-1, to=1-2]
	\arrow["{1_U\otimes\sigma_{VW}}"', from=2-1, to=3-2]
	\arrow[""{name=1, anchor=center, inner sep=0}, "{\alpha^{-1}_{UWV}}"', from=3-2, to=3-4]
	\arrow["{\sigma_{UW}\otimes1_V}"', from=3-4, to=2-5]
	\arrow["{\H^R_{UVW}}", shorten <=13pt, shorten >=13pt, Rightarrow, from=0, to=1]
\end{tikzcd}
\end{equation}
\begin{equation}
\begin{tikzcd}
	& {((UV)W)} && {(W(UV))} \\
	{(U(VW))} &&&& {((WU)V)} \\
	& {(U(WV))} && {((UW)V)}
	\arrow["{\alpha_{UVW}}"', from=1-2, to=2-1]
	\arrow[""{name=0, anchor=center, inner sep=0}, "{\sigma^{-1}_{(UV)W}}"', from=1-4, to=1-2]
	\arrow["{\alpha_{WUV}}"', from=2-5, to=1-4]
	\arrow["{\sigma^{-1}_{UW}\otimes1_V}", from=2-5, to=3-4]
	\arrow["{1_U\otimes\sigma^{-1}_{VW}}", from=3-2, to=2-1]
	\arrow[""{name=1, anchor=center, inner sep=0}, "{\alpha_{UWV}}", from=3-4, to=3-2]
	\arrow["{(\H^R)^{-1}_{UVW}}", shorten <=13pt, shorten >=13pt, Rightarrow, from=0, to=1]
\end{tikzcd}
\end{equation}
\end{subequations}
\end{enumerate}
This data is required to satisfy the following four axioms:

1. \textbf{Left tetrahedron}, i.e., the following two pasting diagrams in $\CC$ are equivalent:
\[\begin{tikzcd}[column sep=0.15in,row sep=0.3in]
	&& {\big((U(VW))X\big)} \\
	& {\big(((UV)W)X\big)} && {\big(U((VW)X)\big)} \\
	{\big(((VU)W)X\big)} & {\big((UV)(WX)\big)} && {\big(U(V(WX))\big)} & {\big(((VW)X)U\big)} \\
	\\
	{\big((V(UW))X\big)} & {\big((VU)(WX)\big)} && {\big((V(WX))U\big)} & {\big((VW)(XU)\big)} \\
	\\
	{\big(V((UW)X)\big)} & {\big(V(U(WX))\big)} && {\big(V((WX)U)\big)} & {\big(V(W(XU))\big)}
	\arrow["{\alpha_{U(VW)X}}", from=1-3, to=2-4]
	\arrow["{\alpha_{UVW}\otimes1_X}", from=2-2, to=1-3]
	\arrow["{(\sigma_{UV}\otimes1_W)\otimes1_X}"', from=2-2, to=3-1]
	\arrow["{\alpha_{(UV)WX}}", from=2-2, to=3-2]
	\arrow["{1_U\otimes\alpha_{VWX}}"', from=2-4, to=3-4]
	\arrow["{\sigma_{U((VW)X)}}", from=2-4, to=3-5]
	\arrow["{\alpha_{VUW}\otimes1_X}"', from=3-1, to=5-1]
	\arrow[""{name=0, anchor=center, inner sep=0}, "{\alpha_{(VU)WX}}"{description}, from=3-1, to=5-2]
	\arrow[""{name=1, anchor=center, inner sep=0}, "{\alpha_{UV(WX)}}"', from=3-2, to=3-4]
	\arrow[""{name=2, anchor=center, inner sep=0}, "{\sigma_{UV}\otimes1_{(WX)}}", from=3-2, to=5-2]
	\arrow[""{name=3, anchor=center, inner sep=0}, "{\sigma_{U(V(WX))}}"', from=3-4, to=5-4]
	\arrow[""{name=4, anchor=center, inner sep=0}, "{\alpha_{VWX}\otimes1_U}"{description}, from=3-5, to=5-4]
	\arrow["{\alpha_{(VW)XU}}", from=3-5, to=5-5]
	\arrow[""{name=5, anchor=center, inner sep=0}, "{\alpha_{V(UW)X}}"', from=5-1, to=7-1]
	\arrow[""{name=6, anchor=center, inner sep=0}, "{\alpha_{VU(WX)}}", from=5-2, to=7-2]
	\arrow[""{name=7, anchor=center, inner sep=0}, "{\alpha_{V(WX)U}}"', from=5-4, to=7-4]
	\arrow[""{name=8, anchor=center, inner sep=0}, "{\alpha_{VW(XU)}}", from=5-5, to=7-5]
	\arrow["{1_V\otimes\alpha_{UWX}}", curve={height=-12pt}, from=7-1, to=7-2]
	\arrow[""{name=9, anchor=center, inner sep=0}, "{1_V\otimes\sigma_{U(WX)}}"', from=7-2, to=7-4]
	\arrow["{1_V\otimes\alpha_{WXU}}", curve={height=-12pt}, from=7-4, to=7-5]
	\arrow["{\Pi_{UVWX}}"', shorten <=4pt, shorten >=4pt, Rightarrow, from=1-3, to=1]
	\arrow["{\alpha_{\sigma_{UV}\,\boxtimes \,1_W\,\boxtimes \,1_X}}"'{pos=0.7}, shift right=3, Rightarrow, from=2, to=0]
	\arrow["{\H^L_{UV(WX)}}"', shorten <=17pt, shorten >=17pt, Rightarrow, from=1, to=9]
	\arrow["{\sigma_{1_U\,\boxtimes \,\alpha_{VWX}}}"'{pos=0.3}, shift right=3, Rightarrow, from=4, to=3]
	\arrow["{\overleftarrow{\Pi}_{VUWX}}"', shift right=3, Rightarrow, from=6, to=5]
	\arrow["{\overleftarrow{\Pi}_{VWXU}}"', shift right=5, Rightarrow, from=8, to=7]
\end{tikzcd}\]
and
\[\begin{tikzcd}[column sep=0.15in,row sep=0.3in]
	&& {\big((U(VW))X\big)} \\
	& {\big(((UV)W)X\big)} && {\big(U((VW)X)\big)} \\
	{\big(((VU)W)X\big)} && {\big(((VW)U)X\big)} && {\big(((VW)X)U\big)} \\
	& {\big((V(WU))X\big)} && {\big((VW)(UX)\big)} \\
	{\big((V(UW))X\big)} &&&& {\big((VW)(XU)\big)} \\
	& {\big(V((WU)X)\big)} && {\big(V(W(UX))\big)} \\
	{\big(V((UW)X)\big)} & {\big(V(U(WX))\big)} && {\big(V((WX)U)\big)} & {\big(V(W(XU))\big)}
	\arrow["{\alpha_{U(VW)X}}", from=1-3, to=2-4]
	\arrow[""{name=0, anchor=center, inner sep=0}, "{\sigma_{U(VW)}\otimes1_X}"{description}, from=1-3, to=3-3]
	\arrow["{\alpha_{UVW}\otimes1_X}", from=2-2, to=1-3]
	\arrow["{(\sigma_{UV}\otimes1_W)\otimes1_X}"', from=2-2, to=3-1]
	\arrow["{\sigma_{U((VW)X)}}", from=2-4, to=3-5]
	\arrow["{\H^L_{U(VW)X}}"{description}, Rightarrow, from=2-4, to=4-4]
	\arrow[""{name=1, anchor=center, inner sep=0}, "{\alpha_{VUW}\otimes1_X}"', from=3-1, to=5-1]
	\arrow["{\alpha_{VWU}\otimes1_X}"{description}, from=3-3, to=4-2]
	\arrow["{\alpha_{(VW)UX}}"{description}, from=3-3, to=4-4]
	\arrow["{\alpha_{(VW)XU}}", from=3-5, to=5-5]
	\arrow[""{name=2, anchor=center, inner sep=0}, "{\alpha_{V(WU)X}}"{description}, from=4-2, to=6-2]
	\arrow[""{name=3, anchor=center, inner sep=0}, "{1_{(VW)}\otimes\,\sigma_{UX}}"{description}, from=4-4, to=5-5]
	\arrow[""{name=4, anchor=center, inner sep=0}, "{\alpha_{VW(UX)}}"{description}, from=4-4, to=6-4]
	\arrow[""{name=5, anchor=center, inner sep=0}, "{(1_V\otimes\,\sigma_{UW})\otimes1_X}"{description}, from=5-1, to=4-2]
	\arrow["{\alpha_{V(UW)X}}"', from=5-1, to=7-1]
	\arrow["{\alpha_{VW(XU)}}", from=5-5, to=7-5]
	\arrow[""{name=6, anchor=center, inner sep=0}, "{1_V\otimes\,\alpha_{WUX}}", from=6-2, to=6-4]
	\arrow[""{name=7, anchor=center, inner sep=0}, "{1_V\otimes(1_W\otimes\,\sigma_{UX})}"{description}, from=6-4, to=7-5]
	\arrow[""{name=8, anchor=center, inner sep=0}, "{1_V\otimes(\sigma_{UW}\otimes1_X)}"{description}, from=7-1, to=6-2]
	\arrow["{1_V\otimes\,\alpha_{UWX}}"', curve={height=12pt}, from=7-1, to=7-2]
	\arrow[""{name=9, anchor=center, inner sep=0}, "{1_V\otimes\,\sigma_{U(WX)}}"', from=7-2, to=7-4]
	\arrow["{1_V\otimes\,\alpha_{WXU}}"', curve={height=12pt}, from=7-4, to=7-5]
	\arrow["{\H^L_{UVW}\otimes1_{1_X}}"{description}, shorten <=25pt, shorten >=25pt, Rightarrow, from=0, to=1]
	\arrow["{\overleftarrow{\Pi}_{VWUX}}"', shorten <=38pt, shorten >=38pt, Rightarrow, from=4, to=2]
	\arrow["{-\alpha_{1_V\,\boxtimes \,1_W\,\boxtimes \,\sigma_{UX}}}"{description}, shorten <=4pt, shorten >=4pt, Rightarrow, from=3, to=7]
	\arrow["{-\alpha_{1_V\,\boxtimes \,\sigma_{UW}\,\boxtimes \,1_X}}"{description}, shorten <=4pt, shorten >=4pt, Rightarrow, from=5, to=8]
	\arrow["{1_{1_V}\otimes\,\overleftarrow{\H^L}_{UWX}}"{description}, shorten <=2pt, shorten >=2pt, Rightarrow, from=6, to=9]
\end{tikzcd}\]
which reads explicitly as,
\begin{align}
&(1_V\otimes\alpha_{WXU})\alpha_{V(WX)U}\sigma_{1_U\,\boxtimes \,\alpha_{VWX}}\alpha_{U(VW)X}(\alpha_{UVW}\otimes1_X)-\Pi_{VWXU}\sigma_{U((VW)X)}\alpha_{U(VW)X}(\alpha_{UVW}\otimes1_X)\nn\\
&+(1_V\otimes\alpha_{WXU})\H^L_{UV(WX)}\alpha_{(UV)WX}+(1_V\otimes\alpha_{WXU}\sigma_{U(WX)})\alpha_{VU(WX)}\alpha_{\sigma_{UV}\,\boxtimes \,1_W\,\boxtimes \,1_X}\nn\\&+(1_V\otimes\alpha_{WXU})\alpha_{V(WX)U}\sigma_{U(V(WX))}\Pi_{UVWX}-(1_V\otimes\alpha_{WXU}\sigma_{U(WX)})\Pi_{VUWX}\big((\sigma_{UV}\otimes1_W)\otimes1_X\big)\nn\\&=\label{eq:left tetrahedron}\\
&\alpha_{VW(XU)}\H^L_{U(VW)X}(\alpha_{UVW}\otimes1_X)-\left(1_V\otimes(1_W\otimes\sigma_{UX})\alpha_{WUX}\right)\alpha_{1_V\,\boxtimes \,\sigma_{UW}\,\boxtimes \,1_X}\left(\alpha_{VUW}(\sigma_{UV}\otimes1_W)\otimes1_X\right)\nn\\&-\big(1_V\otimes(1_W\otimes\sigma_{UX})\big)\Pi_{VWUX}\big(\sigma_{U(VW)}\alpha_{UVW}\otimes1_X\big)-\alpha_{1_V\,\boxtimes \,1_W\,\boxtimes \,\sigma_{UX}}\alpha_{(VW)UX}(\sigma_{U(VW)}\alpha_{UVW}\otimes1_X)\nn\\&+\left(1_V\otimes(1_W\otimes\sigma_{UX})\alpha_{WUX}\right)\alpha_{V(WU)X}(\H^L_{UVW}\otimes1_X)-\left(1_V\otimes\H^L_{UWX}\right)\alpha_{V(UW)X}\left(\alpha_{VUW}(\sigma_{UV}\otimes1_W)\otimes1_X\right)\nn
\end{align}

2. \textbf{Right tetrahedron}, i.e., the following two pasting diagrams in $\CC$ are equivalent:
\[\begin{tikzcd}[column sep=0.15in,row sep=0.3in]
	&& {\big(U((VW)X)\big)} \\
	& {\big(U(V(WX))\big)} && {\big((U(VW))X\big)} \\
	{\big(U(V(XW))\big)} & {\big((UV)(WX)\big)} && {\big(((UV)W)X\big)} & {\big(X(U(VW))\big)} \\
	\\
	{\big(U((VX)W)\big)} & {\big((UV)(XW)\big)} && {\big(X((UV)W)\big)} & {\big((XU)(VW)\big)} \\
	\\
	{\big((U(VX))W\big)} & {\big(((UV)X)W\big)} && {\big((X(UV))W\big)} & {\big(((XU)V)W\big)}
	\arrow["{\alpha^{-1}_{U(VW)X}}", from=1-3, to=2-4]
	\arrow["{1_U\otimes\alpha^{-1}_{VWX}}", from=2-2, to=1-3]
	\arrow["{1_U\otimes(1_V\otimes\sigma_{WX})}"', from=2-2, to=3-1]
	\arrow["{\alpha^{-1}_{UV(WX)}}", from=2-2, to=3-2]
	\arrow["{\alpha^{-1}_{UVW}\otimes1_X}"', from=2-4, to=3-4]
	\arrow["{\sigma_{(U(VW))X}}", from=2-4, to=3-5]
	\arrow["{1_U\otimes\alpha^{-1}_{VXW}}"', from=3-1, to=5-1]
	\arrow[""{name=0, anchor=center, inner sep=0}, "{\alpha^{-1}_{UV(XW)}}"{description}, from=3-1, to=5-2]
	\arrow[""{name=1, anchor=center, inner sep=0}, "{\alpha^{-1}_{(UV)WX}}"', from=3-2, to=3-4]
	\arrow[""{name=2, anchor=center, inner sep=0}, "{1_{(UV)}\otimes\sigma_{WX}}", from=3-2, to=5-2]
	\arrow[""{name=3, anchor=center, inner sep=0}, "{\sigma_{((UV)W)X}}"', from=3-4, to=5-4]
	\arrow[""{name=4, anchor=center, inner sep=0}, "{1_X\otimes\alpha^{-1}_{UVW}}"{description}, from=3-5, to=5-4]
	\arrow["{\alpha^{-1}_{XU(VW)}}", from=3-5, to=5-5]
	\arrow[""{name=5, anchor=center, inner sep=0}, "{\alpha^{-1}_{U(VX)W}}"', from=5-1, to=7-1]
	\arrow[""{name=6, anchor=center, inner sep=0}, "{\alpha^{-1}_{(UV)XW}}", from=5-2, to=7-2]
	\arrow[""{name=7, anchor=center, inner sep=0}, "{\alpha^{-1}_{X(UV)W}}"', from=5-4, to=7-4]
	\arrow[""{name=8, anchor=center, inner sep=0}, "{\alpha^{-1}_{(XU)VW}}", from=5-5, to=7-5]
	\arrow["{\alpha^{-1}_{UVX}\otimes1_W}", curve={height=-12pt}, from=7-1, to=7-2]
	\arrow[""{name=9, anchor=center, inner sep=0}, "{\sigma_{(UV)X}\otimes1_W}"', from=7-2, to=7-4]
	\arrow["{\alpha^{-1}_{XUV}\otimes1_W}", curve={height=-12pt}, from=7-4, to=7-5]
	\arrow["{\Pi^{-1}_{UVWX}}"', shorten <=4pt, shorten >=4pt, Rightarrow, from=1-3, to=1]
	\arrow["{\alpha^{-1}_{1_{UV}\,\boxtimes\,\sigma_{WX}}}"'{pos=0.7}, shift right=3, Rightarrow, from=2, to=0]
	\arrow["{\H^R_{(UV)WX}}"', shorten <=17pt, shorten >=17pt, Rightarrow, from=1, to=9]
	\arrow["{\sigma_{\alpha^{-1}_{UVW}\,\boxtimes\,1_X}}"'{pos=0.3}, shift right=3, Rightarrow, from=4, to=3]
	\arrow["{\overleftarrow{\Pi^{-1}}_{UVXW}}"', shift right=3, Rightarrow, from=6, to=5]
	\arrow["{\overleftarrow{\Pi^{-1}}_{XUVW}}"', shift right, Rightarrow, from=8, to=7]
\end{tikzcd}\]
and 
\[\begin{tikzcd}[column sep=0.15in,row sep=0.3in]
	&& {\big(U((VW)X)\big)} \\
	& {\big(U(V(WX))\big)} && {\big((U(VW))X\big)} \\
	{\big(U(V(XW))\big)} && {\big(U(X(VW))\big)} && {\big(X(U(VW))\big)} \\
	& {\big(U((XV)W)\big)} && {\big((UX)(VW)\big)} \\
	{\big(U((VX)W)\big)} &&&& {\big((XU)(VW)\big)} \\
	& {\big((U(XV))W\big)} && {\big(((UX)V)W\big)} \\
	{\big((U(VX))W\big)} & {\big(((UV)X)W\big)} && {\big((X(UV))W\big)} & {\big(((XU)V)W\big)}
	\arrow["{\alpha^{-1}_{U(VW)X}}", from=1-3, to=2-4]
	\arrow[""{name=0, anchor=center, inner sep=0}, "{1_U\otimes\sigma_{(VW)X}}"{description}, from=1-3, to=3-3]
	\arrow["{1_U\otimes\alpha^{-1}_{VWX}}", from=2-2, to=1-3]
	\arrow["{1_U\otimes(1_V\otimes\sigma_{WX})}"', from=2-2, to=3-1]
	\arrow["{\sigma_{(U(VW))X}}", from=2-4, to=3-5]
	\arrow["{\overleftarrow{\H^R}_{U(VW)X}}"{description}, Rightarrow, from=2-4, to=4-4]
	\arrow[""{name=1, anchor=center, inner sep=0}, "{1_U\otimes\alpha^{-1}_{VXW}}"', from=3-1, to=5-1]
	\arrow["{1_U\otimes\alpha^{-1}_{XVW}}", from=3-3, to=4-2]
	\arrow["{\alpha^{-1}_{UX(VW)}}"', from=3-3, to=4-4]
	\arrow["{\alpha^{-1}_{XU(VW)}}", from=3-5, to=5-5]
	\arrow[""{name=2, anchor=center, inner sep=0}, "{\alpha^{-1}_{U(XV)W}}"{description}, from=4-2, to=6-2]
	\arrow[""{name=3, anchor=center, inner sep=0}, "{\sigma_{UX}\otimes1_{(VW)}}"{description}, from=4-4, to=5-5]
	\arrow[""{name=4, anchor=center, inner sep=0}, "{\alpha^{-1}_{(UX)VW}}"{description}, from=4-4, to=6-4]
	\arrow[""{name=5, anchor=center, inner sep=0}, "{1_U\otimes(\sigma_{VX}\otimes1_W)}"{description}, from=5-1, to=4-2]
	\arrow["{\alpha^{-1}_{U(VX)W}}"', from=5-1, to=7-1]
	\arrow["{\alpha^{-1}_{(XU)VW}}", from=5-5, to=7-5]
	\arrow[""{name=6, anchor=center, inner sep=0}, "{\alpha^{-1}_{UXV}\otimes1_W}", from=6-2, to=6-4]
	\arrow[""{name=7, anchor=center, inner sep=0}, "{(\sigma_{UX}\otimes1_V)\otimes1_W}"{description}, from=6-4, to=7-5]
	\arrow[""{name=8, anchor=center, inner sep=0}, "{(1_U\otimes\sigma_{VX})\otimes1_W}"{description}, from=7-1, to=6-2]
	\arrow["{\alpha^{-1}_{UVX}\otimes1_W}"', curve={height=12pt}, from=7-1, to=7-2]
	\arrow[""{name=9, anchor=center, inner sep=0}, "{\sigma_{(UV)X}\otimes1_W}"', from=7-2, to=7-4]
	\arrow["{\alpha^{-1}_{XUV}\otimes1_W}"', curve={height=12pt}, from=7-4, to=7-5]
	\arrow["{1_{1_U}\otimes\overleftarrow{\H^R}_{VWX}}"{description}, shorten <=25pt, shorten >=25pt, Rightarrow, from=0, to=1]
	\arrow["{\overleftarrow{\Pi^{-1}}_{UXVW}}"', shorten <=25pt, shorten >=25pt, Rightarrow, from=4, to=2]
	\arrow["{-\alpha^{-1}_{\sigma_{UX}\,\boxtimes\,1_{VW}}}"{description}, shorten <=9pt, shorten >=9pt, Rightarrow, from=3, to=7]
	\arrow["{-\alpha^{-1}_{1_U\,\boxtimes\,\sigma_{VX}\,\boxtimes\,1_W}}"{description}, shorten <=9pt, shorten >=9pt, Rightarrow, from=5, to=8]
	\arrow["{\overleftarrow{\H^R}_{UVX}\otimes1_{1_W}}"{description}, Rightarrow, from=6, to=9]
\end{tikzcd}\]
which reads explicitly as,
\begin{align}
&(\alpha^{-1}_{XUV}\otimes1_W)\alpha^{-1}_{X(UV)W}\sigma_{\alpha^{-1}_{UVW}\,\boxtimes\,1_X}\alpha^{-1}_{U(VW)X}(1_U\otimes\alpha^{-1}_{VWX})-\Pi^{-1}_{XUVW}\sigma_{(U(VW))X)}\alpha^{-1}_{U(VW)X}(1_U\otimes\alpha^{-1}_{VWX})\nn\\
&+(\alpha^{-1}_{XUV}\otimes1_W)\H^R_{(UV)WX}\alpha^{-1}_{UV(WX)}+(\alpha^{-1}_{XUV}\sigma_{(UV)X}\otimes1_W)\alpha^{-1}_{(UV)XW}\alpha^{-1}_{1_{UV}\,\boxtimes\,\sigma_{WX}}\nn\\&+(\alpha^{-1}_{XUV}\otimes1_W)\alpha^{-1}_{X(UV)W}\sigma_{((UV)W)X}\Pi^{-1}_{UVWX}-(\alpha^{-1}_{XUV}\sigma_{(UV)X}\otimes1_W)\Pi^{-1}_{UVXW}\big(1_U\otimes(1_V\otimes\sigma_{WX})\big)\nn\\&=\label{eq:right tetrahedron}\\
&-\alpha^{-1}_{(XU)VW}\H^R_{U(VW)X}(1_U\otimes\alpha^{-1}_{VWX})-\left((\sigma_{UX}\otimes1_V)\alpha^{-1}_{UXV}\otimes1_W\right)\alpha^{-1}_{1_U\,\boxtimes \,\sigma_{VX}\,\boxtimes \,1_W}\left(1_U\otimes\alpha^{-1}_{VXW}(1_V\otimes\sigma_{WX})\right)\nn\\&-\big((\sigma_{UX}\otimes1_V)\otimes1_W\big)\Pi^{-1}_{UXVW}\big(1_U\otimes\sigma_{(VW)X}\alpha^{-1}_{VWX}\big)-\alpha^{-1}_{\sigma_{UX}\,\boxtimes\,1_{VW}}\alpha^{-1}_{UX(VW)}\big(1_U\otimes\sigma_{(VW)X}\alpha^{-1}_{VWX}\big)\nn\\&-\left((\sigma_{UX}\otimes1_V)\alpha^{-1}_{UXV}\otimes1_W\right)\alpha^{-1}_{U(XV)W}(1_U\otimes\H^R_{VWX})-\left(\H^R_{UVX}\otimes1_W\right)\alpha^{-1}_{U(VX)W}\left(1_U\otimes\alpha^{-1}_{VXW}(1_V\otimes\sigma_{WX})\right)\nn
\end{align}
3. \textbf{Hexahedron}, i.e., the following two pasting diagrams in $\CC$ are equivalent:
\[\begin{tikzcd}[column sep=0.15in,row sep=0.3in]
	& {\big((U(VW))X\big)} && {\big(U((VW)X)\big)} \\
	{\big(((UV)W)X\big)} &&&& {\big(U(V(WX))\big)} \\
	{\big((W(UV))X\big)} && {\big((UV)(WX)\big)} && {\big(U((WX)V)\big)} \\
	\\
	\\
	\\
	{\big(W((UV)X)\big)} && {\big((WX)(UV)\big)} && {\big((U(WX))V\big)} \\
	{\big(W(X(UV))\big)} &&&& {\big(((WX)U)V\big)} \\
	& {\big(W((XU)V)\big)} && {\big((W(XU))V\big)}
	\arrow["{\alpha_{U(VW)X}}", from=1-2, to=1-4]
	\arrow["{\alpha^{-1}_{UVW}\otimes1_X}"', from=1-2, to=2-1]
	\arrow["{1_U\otimes\alpha_{VWX}}", from=1-4, to=2-5]
	\arrow["{\alpha^{-1}_{UV(WX)}\overleftarrow{\Pi}_{UVWX}(\alpha^{-1}_{UVW}\otimes1_X)}", shorten <=20pt, shorten >=20pt, Rightarrow, from=2-1, to=2-5]
	\arrow["{\sigma_{(UV)W}\otimes1_X}"', from=2-1, to=3-1]
	\arrow["{\alpha_{(UV)WX}}"', from=2-1, to=3-3]
	\arrow["{\alpha^{-1}_{UV(WX)}}", from=2-5, to=3-3]
	\arrow["{1_U\otimes\sigma_{V(WX)}}", from=2-5, to=3-5]
	\arrow[""{name=0, anchor=center, inner sep=0}, "{\alpha_{W(UV)X}}"', from=3-1, to=7-1]
	\arrow[""{name=1, anchor=center, inner sep=0}, "{\sigma_{(UV)(WX)}}"{description}, from=3-3, to=7-3]
	\arrow[""{name=2, anchor=center, inner sep=0}, "{\alpha^{-1}_{U(WX)V}}", from=3-5, to=7-5]
	\arrow["{1_W\otimes\sigma_{(UV)X}}"', from=7-1, to=8-1]
	\arrow["{\alpha_{WX(UV)}}"', from=7-3, to=8-1]
	\arrow["{\alpha^{-1}_{(WX)UV}}", from=7-3, to=8-5]
	\arrow["{\sigma_{U(WX)}\otimes1_V}", from=7-5, to=8-5]
	\arrow["{(1_W\otimes\alpha^{-1}_{XUV})\overleftarrow{\Pi}_{WXUV}\alpha^{-1}_{(WX)UV}}", shorten <=20pt, shorten >=20pt, Rightarrow, from=8-1, to=8-5]
	\arrow["{1_W\otimes\alpha^{-1}_{XUV}}"', from=8-1, to=9-2]
	\arrow["{\alpha_{WXU}\otimes1_V}", from=8-5, to=9-4]
	\arrow["{\alpha_{W(XU)V}}", from=9-4, to=9-2]
	\arrow["{\overleftarrow{\H^L}_{(UV)WX}}", shorten <=38pt, shorten >=38pt, Rightarrow, from=0, to=1]
	\arrow["{\H^R_{UV(WX)}}", shorten <=38pt, shorten >=38pt, Rightarrow, from=1, to=2]
\end{tikzcd}\]
and
\[\begin{tikzcd}[column sep=0.15in,row sep=0.3in]
	{\big(((UV)W)X\big)} & {\big((U(VW))X\big)} && {\big(U((VW)X)\big)} & {\big(U(V(WX))\big)} \\
	{\big((W(UV))X\big)} &&&& {\big(U((WX)V)\big)} \\
	& {\big((U(WV))X\big)} && {\big(U((WV)X)\big)} \\
	\\
	{\big(((WU)V)X\big)} & {\big(((UW)V)X\big)} & {\big((UW)(VX)\big)} & {\big(U(W(VX))\big)} & {\big(U(W(XV))\big)} \\
	\\
	& {\big((WU)(VX)\big)} & {} & {\big((UW)(XV)\big)} \\
	\\
	{\big(W(U(VX))\big)} & {\big(W(U(XV))\big)} & {\big((WU)(XV)\big)} & {\big(((WU)X)V\big)} & {\big(((UW)X)V\big)} \\
	\\
	& {\big(W((UX)V)\big)} && {\big((W(UX))V\big)} \\
	{\big(W((UV)X)\big)} &&&& {\big((U(WX))V\big)} \\
	{\big(W(X(UV))\big)} & {\big(W((XU)V)\big)} && {\big((W(XU))V\big)} & {\big(((WX)U)V\big)}
	\arrow["{\sigma_{(UV)W}\otimes1_X}"', from=1-1, to=2-1]
	\arrow[""{name=0, anchor=center, inner sep=0}, "{\alpha^{-1}_{UVW}\otimes1_X}"', curve={height=12pt}, from=1-2, to=1-1]
	\arrow["{\alpha_{U(VW)X}}", from=1-2, to=1-4]
	\arrow[""{name=1, anchor=center, inner sep=0}, "{(1_U\otimes\sigma_{VW})\otimes1_X}"{description}, from=1-2, to=3-2]
	\arrow[""{name=2, anchor=center, inner sep=0}, "{1_U\otimes\alpha_{VWX}}", curve={height=-12pt}, from=1-4, to=1-5]
	\arrow[""{name=3, anchor=center, inner sep=0}, "{1_U\otimes(\sigma_{VW}\otimes1_X)}"{description}, from=1-4, to=3-4]
	\arrow["{1_U\otimes\sigma_{V(WX)}}", from=1-5, to=2-5]
	\arrow["{\alpha^{-1}_{WUV}\otimes1_X}"{description}, from=2-1, to=5-1]
	\arrow[""{name=4, anchor=center, inner sep=0}, "{\alpha_{W(UV)X}}"{description, pos=0.6}, shift right=5, curve={height=30pt}, dashed, from=2-1, to=12-1]
	\arrow["{1_U\otimes\alpha_{WXV}}"{description}, from=2-5, to=5-5]
	\arrow[""{name=5, anchor=center, inner sep=0}, "{\alpha^{-1}_{U(WX)V}}"{description, pos=0.6}, shift left=5, curve={height=-30pt}, dashed, from=2-5, to=12-5]
	\arrow["{\alpha_{U(WV)X}}"{description}, from=3-2, to=3-4]
	\arrow[""{name=6, anchor=center, inner sep=0}, "{\alpha^{-1}_{UWV}\otimes1_X}"{description, pos=0.4}, from=3-2, to=5-2]
	\arrow[""{name=7, anchor=center, inner sep=0}, "{1_U\otimes\alpha_{WVX}}"{description, pos=0.4}, from=3-4, to=5-4]
	\arrow[""{name=8, anchor=center, inner sep=0}, "{\alpha_{(WU)VX}}"{description, pos=0.3}, from=5-1, to=7-2]
	\arrow[""{name=9, anchor=center, inner sep=0}, "{(\sigma_{UW}\otimes1_V)\otimes1_X}"', curve={height=12pt}, from=5-2, to=5-1]
	\arrow["{\alpha_{(UW)VX}}", curve={height=-12pt}, from=5-2, to=5-3]
	\arrow[""{name=10, anchor=center, inner sep=0}, "{\sigma_{UW}\otimes1_{(VX)}}"{description, pos=0.3}, from=5-3, to=7-2]
	\arrow[""{name=11, anchor=center, inner sep=0}, "{1_{(UW)}\otimes\sigma_{VX}}"{description, pos=0.3}, from=5-3, to=7-4]
	\arrow["{\alpha^{-1}_{UW(VX)}}"', curve={height=12pt}, from=5-4, to=5-3]
	\arrow[""{name=12, anchor=center, inner sep=0}, "{1_U\otimes(1_W\otimes\sigma_{VX})}", curve={height=-12pt}, from=5-4, to=5-5]
	\arrow[""{name=13, anchor=center, inner sep=0}, "{\alpha^{-1}_{UW(XV)}}"{description, pos=0.3}, from=5-5, to=7-4]
	\arrow[""{name=14, anchor=center, inner sep=0}, "{\alpha_{WU(VX)}}"{description, pos=0.7}, from=7-2, to=9-1]
	\arrow[""{name=15, anchor=center, inner sep=0}, "{1_{(WU)}\otimes\sigma_{VX}}"{description, pos=0.7}, from=7-2, to=9-3]
	\arrow[from=7-3, to=7-3, loop, in=55, out=125, distance=10mm]
	\arrow[""{name=16, anchor=center, inner sep=0}, "{\sigma_{UW}\otimes1_{(XV)}}"{description, pos=0.7}, from=7-4, to=9-3]
	\arrow[""{name=17, anchor=center, inner sep=0}, "{\alpha^{-1}_{(UW)XV}}"{description, pos=0.7}, from=7-4, to=9-5]
	\arrow[""{name=18, anchor=center, inner sep=0}, "{1_W\otimes(1_U\otimes\sigma_{VX})}"', curve={height=12pt}, from=9-1, to=9-2]
	\arrow["{1_W\otimes\alpha^{-1}_{UVX}}"{description}, from=9-1, to=12-1]
	\arrow[""{name=19, anchor=center, inner sep=0}, "{1_W\otimes\alpha^{-1}_{UXV}}"{description}, from=9-2, to=11-2]
	\arrow["{\alpha_{WU(XV)}}", curve={height=-12pt}, from=9-3, to=9-2]
	\arrow["{\alpha^{-1}_{(WU)XV}}"', curve={height=12pt}, from=9-3, to=9-4]
	\arrow[""{name=20, anchor=center, inner sep=0}, "{\alpha_{WUX}\otimes1_V}"{description}, from=9-4, to=11-4]
	\arrow[""{name=21, anchor=center, inner sep=0}, "{(\sigma_{UW}\otimes1_X)\otimes1_V}", curve={height=-12pt}, from=9-5, to=9-4]
	\arrow["{\alpha_{UWX}\otimes1_V}"{description}, from=9-5, to=12-5]
	\arrow[""{name=22, anchor=center, inner sep=0}, "{1_W\otimes(\sigma_{UX}\otimes1_V)}"{description}, from=11-2, to=13-2]
	\arrow["{\alpha_{W(UX)V}}"{description}, from=11-4, to=11-2]
	\arrow[""{name=23, anchor=center, inner sep=0}, "{(1_W\otimes\sigma_{UX})\otimes1_V}"{description}, from=11-4, to=13-4]
	\arrow["{1_W\otimes\sigma_{(UV)X}}"', from=12-1, to=13-1]
	\arrow["{\sigma_{U(WX)}\otimes1_V}", from=12-5, to=13-5]
	\arrow[""{name=24, anchor=center, inner sep=0}, "{1_W\otimes\alpha^{-1}_{XUV}}"', curve={height=12pt}, from=13-1, to=13-2]
	\arrow["{\alpha_{W(XU)V}}", from=13-4, to=13-2]
	\arrow[""{name=25, anchor=center, inner sep=0}, "{\alpha_{WXU}\otimes1_V}", curve={height=-12pt}, from=13-5, to=13-4]
	\arrow["{-\alpha_{1_U\,\boxtimes\,\sigma_{VW}\,\boxtimes\,1_X}}", shorten <=38pt, shorten >=38pt, Rightarrow, from=1, to=3]
	\arrow["{\H^R_{UVW}\otimes1_{1_X}}"{description}, shorten <=17pt, shorten >=17pt, Rightarrow, from=0, to=9]
	\arrow["{(1_W\otimes\alpha^{-1}_{UVX})\Pi_{WUVX}(\alpha^{-1}_{WUV}\otimes1_X)}"{pos=0.4}, Rightarrow, from=4, to=7-2]
	\arrow["{\alpha^{-1}_{UW(VX)}\overleftarrow{\Pi}_{UWVX}(\alpha^{-1}_{UWV}\otimes1_X)}", shorten <=38pt, shorten >=38pt, Rightarrow, from=6, to=7]
	\arrow["{-\alpha_{\sigma_{UW}\,\boxtimes\,1_{VX}}}", shift left=4, shorten <=13pt, shorten >=13pt, Rightarrow, from=8, to=10]
	\arrow["{\alpha^{-1}_{1_{UW}\,\boxtimes\,\sigma_{VX}}}", shift left=4, shorten <=13pt, shorten >=13pt, Rightarrow, from=11, to=13]
	\arrow["{1_{1_U}\otimes\overleftarrow{\H^L}_{VWX}}"{description}, shorten <=17pt, shorten >=17pt, Rightarrow, from=12, to=2]
	\arrow["{\alpha_{1_{WU}\,\boxtimes \,\sigma_{VX}}}", shift right=5, shorten <=13pt, shorten >=13pt, Rightarrow, from=14, to=15]
	\arrow["{-\alpha^{-1}_{\sigma_{UW}\,\boxtimes\,1_{XV}}}", shift right=5, shorten <=13pt, shorten >=13pt, Rightarrow, from=16, to=17]
	\arrow["{(\alpha_{UWX}\otimes1_V)\overleftarrow{\Pi^{-1}}_{UWXV}(1_U\otimes\alpha_{WXV})}"{pos=0.6}, shorten >=8pt, Rightarrow, from=7-4, to=5]
	\arrow["{(1_W\otimes\alpha^{-1}_{UXV})\overleftarrow{\Pi}_{WUXV}\alpha^{-1}_{(WU)XV}}"', shorten <=26pt, shorten >=26pt, Rightarrow, from=19, to=20]
	\arrow["{\overleftarrow{\H^L}_{UWX}\otimes1_{1_V}}"{description}, shorten <=17pt, shorten >=17pt, Rightarrow, from=21, to=25]
	\arrow["{\alpha_{1_W\,\boxtimes\,\sigma_{UX}\,\boxtimes\,1_V}}", shorten <=38pt, shorten >=38pt, Rightarrow, from=22, to=23]
	\arrow["{1_{1_W}\otimes\H^R_{UVX}}"{description}, shorten <=17pt, shorten >=17pt, Rightarrow, from=24, to=18]
\end{tikzcd}\]
which reads explicitly as,
\begin{footnotesize}
\begin{flalign}
&\alpha_{W(XU)V}(\alpha_{WXU}\otimes1_V)\left[\H^R_{UV(WX)}(1_U\otimes\alpha_{VWX})\alpha_{U(VW)X}-\alpha^{-1}_{(WX)UV}\sigma_{(UV)(WX)}\alpha^{-1}_{UV(WX)}\Pi_{UVWX}(\alpha^{-1}_{UVW}\otimes1_X)\right]\nn\\&-(1_W\otimes\alpha^{-1}_{XUV})\left[\Pi_{WXUV}\alpha^{-1}_{(WX)UV}\sigma_{(UV)(WX)}\alpha_{(UV)WX}+\H^L_{(UV)WX}\right](\alpha^{-1}_{UVW}\otimes1_X)\nn\\&=\label{eq:Hexahedron}\\&\left[(1_W\otimes\alpha^{-1}_{XUV}\sigma_{(UV)X}\alpha^{-1}_{UVX})\Pi_{WUVX}+(1_W\otimes\,\H^R_{UVX})\alpha_{WU(VX)}\alpha_{(WU)VX}\right](\alpha^{-1}_{WUV}\sigma_{(UV)W}\alpha^{-1}_{UVW}\otimes1_X)\nn\\&+[1_W\otimes(\sigma_{UX}\otimes1_V)\alpha^{-1}_{UXV}(1_U\otimes\sigma_{VX})]\alpha_{WU(VX)}\left[\alpha_{(WU)VX}(\H^R_{UVW}\otimes1_X)-\alpha_{\sigma_{UW}\boxtimes1_{VX}}\left(\alpha^{-1}_{UWV}(1_U\otimes\sigma_{VW})\otimes1_X\right)\right]\nn\\&+(1_W\otimes[\sigma_{UX}\otimes1_V]\alpha^{-1}_{UXV})\alpha_{1_{WU}\boxtimes\sigma_{VX}}(\sigma_{UW}\otimes1_{(VX)})\alpha_{(UW)VX}\left(\alpha^{-1}_{UWV}[1_U\otimes\sigma_{VW}]\otimes1_X\right)\nn\\&+\alpha_{1_W\boxtimes\sigma_{UX}\boxtimes1_V}(\alpha_{WUX}\otimes1_V)\alpha^{-1}_{(WU)XV}[\sigma_{UW}\otimes\sigma_{VX}]\alpha_{(UW)VX}\left[\alpha^{-1}_{UWV}[1_U\otimes\sigma_{VW}]\otimes1_X\right]\nn\\&-\left[1_W\otimes[\sigma_{UX}\otimes1_V]\alpha^{-1}_{UXV}\right]\Pi_{WUXV}\alpha^{-1}_{(WU)XV}[\sigma_{UW}\otimes\sigma_{VX}]\alpha_{(UW)VX}\left[\alpha^{-1}_{UWV}[1_U\otimes\sigma_{VW}]\otimes1_X\right]\nn\\&-\alpha_{W(XU)V}\left[(1_W\otimes\sigma_{UX})\alpha_{WUX}\otimes1_V\right]\alpha^{-1}_{(WU)XV}(\sigma_{UW}\otimes\sigma_{VX})\alpha^{-1}_{UW(VX)}\Pi_{UWVX}[\alpha^{-1}_{UWV}(1_U\otimes\sigma_{VW})\otimes1_X]\nn\\&-\alpha_{W(XU)V}\left[(1_W\otimes\sigma_{UX})\alpha_{WUX}\otimes1_V\right]\alpha^{-1}_{(WU)XV}(\sigma_{UW}\otimes\sigma_{VX})\alpha^{-1}_{UW(VX)}(1_U\otimes\alpha_{WVX})\alpha_{1_U\boxtimes\sigma_{VW}\boxtimes1_X}\nn\\&-\alpha_{W(XU)V}\left[(1_W\otimes\sigma_{UX})\alpha_{WUX}\otimes1_V\right]\alpha^{-1}_{\sigma_{UW}\boxtimes1_{XV}}(1_{(UW)}\otimes\sigma_{VX})\alpha^{-1}_{UW(VX)}[1_U\otimes\alpha_{WVX}(\sigma_{VW}\otimes1_X)]\alpha_{U(VW)X}\nn\\&+\alpha_{W(XU)V}\left[(1_W\otimes\sigma_{UX})\alpha_{WUX}(\sigma_{UW}\otimes1_X)\otimes1_V\right]\alpha^{-1}_{(UW)XV}\alpha^{-1}_{1_{UW}\boxtimes\sigma_{VX}}[1_U\otimes\alpha_{WVX}(\sigma_{VW}\otimes1_X)]\alpha_{U(VW)X}\nn\\&-\alpha_{W(XU)V}(\H^L_{UWX}\otimes1_V)\alpha^{-1}_{(UW)XV}\alpha^{-1}_{UW(XV)}[1_U\otimes(1_W\otimes\sigma_{VX})\alpha_{WVX}(\sigma_{VW}\otimes1_X)]\alpha_{U(VW)X}\nn\\&-\alpha_{W(XU)V}\left(\alpha_{WXU}\sigma_{U(WX)}\alpha_{UWX}\otimes1_V\right)\alpha^{-1}_{(UW)XV}\alpha^{-1}_{UW(XV)}(1_U\otimes\H^L_{VWX})\alpha_{U(VW)X}\nn\\&-\alpha_{W(XU)V}\left(\alpha_{WXU}\sigma_{U(WX)}\alpha_{UWX}\otimes1_V\right)\Pi^{-1}_{UWXV}(1_U\otimes\alpha_{WXV}\sigma_{V(WX)}\alpha_{VWX})\alpha_{U(VW)X}\nn
\end{flalign}    
\end{footnotesize}

4. \textbf{Breen polytope}, i.e., the following two pasting diagrams in $\CC$ are equivalent:
\[\begin{tikzcd}[column sep=0.15in,row sep=0.3in]
	& {\big((UV)W\big)} \\
	{\big((VU)W\big)} && {\big(U(VW)\big)} \\
	{\big(V(UW)\big)} && {\big(U(WV)\big)} \\
	{\big(V(WU)\big)} && {\big((UW)V\big)} \\
	{\big((VW)U\big)} && {\big((WU)V\big)} \\
	{\big((WV)U\big)} && {\big(W(UV)\big)} \\
	& {\big(W(VU)\big)}
	\arrow[""{name=0, anchor=center, inner sep=0}, "{\sigma_{UV}\otimes1_W}"', from=1-2, to=2-1]
	\arrow["{\alpha_{UVW}}", from=1-2, to=2-3]
	\arrow["{\alpha_{VUW}}"', from=2-1, to=3-1]
	\arrow["{1_U\otimes\sigma_{VW}}", from=2-3, to=3-3]
	\arrow[""{name=1, anchor=center, inner sep=0}, "{\sigma_{U(VW)}}"{description}, curve={height=12pt}, from=2-3, to=5-1]
	\arrow["{1_V\otimes\sigma_{UW}}"', from=3-1, to=4-1]
	\arrow["{\alpha^{-1}_{UWV}}", from=3-3, to=4-3]
	\arrow[""{name=2, anchor=center, inner sep=0}, "{\sigma_{U(WV)}}"{description}, curve={height=-12pt}, from=3-3, to=6-1]
	\arrow["{\alpha^{-1}_{VWU}}"', from=4-1, to=5-1]
	\arrow["{\sigma_{UW}\otimes1_V}", from=4-3, to=5-3]
	\arrow["{\sigma_{VW}\otimes1_U}"', from=5-1, to=6-1]
	\arrow["{\alpha_{WUV}}", from=5-3, to=6-3]
	\arrow["{\alpha_{WVU}}"', from=6-1, to=7-2]
	\arrow[""{name=3, anchor=center, inner sep=0}, "{1_W\otimes\sigma_{UV}}", from=6-3, to=7-2]
	\arrow["{\alpha^{-1}_{VWU}\overleftarrow{\H^L}_{UVW}}", shorten <=8pt, shorten >=8pt, Rightarrow, from=0, to=1]
	\arrow["{\sigma_{1_U\,\boxtimes\,\sigma_{VW}}}", shorten <=7pt, shorten >=7pt, Rightarrow, from=1, to=2]
	\arrow["{\H^L_{UWV}\alpha^{-1}_{UWV}}"', shorten <=8pt, Rightarrow, from=2, to=3]
\end{tikzcd}
\begin{tikzcd}
	& {\big((UV)W\big)} \\
	{\big((VU)W\big)} && {\big(U(VW)\big)} \\
	{\big(V(UW)\big)} && {\big(U(WV)\big)} \\
	{\big(V(WU)\big)} && {\big((UW)V\big)} \\
	{\big((VW)U\big)} && {\big((WU)V\big)} \\
	{\big((WV)U\big)} && {\big(W(UV)\big)} \\
	& {\big(W(VU)\big)}
	\arrow[""{name=0, anchor=center, inner sep=0}, "{\sigma_{UV}\otimes1_W}"', from=1-2, to=2-1]
	\arrow["{\alpha_{UVW}}", from=1-2, to=2-3]
	\arrow[""{name=1, anchor=center, inner sep=0}, "{\sigma_{(UV)W}}"{description, pos=0.4}, curve={height=12pt}, from=1-2, to=6-3]
	\arrow["{\alpha_{VUW}}"', from=2-1, to=3-1]
	\arrow[""{name=2, anchor=center, inner sep=0}, "{\sigma_{(VU)W}}"{description, pos=0.6}, shift left=2, curve={height=-20pt}, from=2-1, to=7-2]
	\arrow["{1_U\otimes\sigma_{VW}}", shift left=5, curve={height=-6pt}, from=2-3, to=3-3]
	\arrow["{1_V\otimes\sigma_{UW}}"', from=3-1, to=4-1]
	\arrow["{\alpha^{-1}_{UWV}}", from=3-3, to=4-3]
	\arrow["{\alpha^{-1}_{VWU}}"', from=4-1, to=5-1]
	\arrow["{\sigma_{UW}\otimes1_V}", from=4-3, to=5-3]
	\arrow["{\sigma_{VW}\otimes1_U}"', shift right=5, curve={height=6pt}, from=5-1, to=6-1]
	\arrow["{\alpha_{WUV}}", from=5-3, to=6-3]
	\arrow["{\alpha_{WVU}}"', from=6-1, to=7-2]
	\arrow[""{name=3, anchor=center, inner sep=0}, "{1_W\otimes\sigma_{UV}}", from=6-3, to=7-2]
	\arrow["{\alpha_{WUV}\H^R_{UVW}\alpha_{UVW}}"{description, pos=0.7}, shift left=5, curve={height=12pt}, shorten <=13pt, shorten >=4pt, Rightarrow, from=1, to=2-3]
	\arrow["{-\sigma_{\sigma_{UV}\,\boxtimes\,1_W}}"{description, pos=0.2}, shorten >=11pt, Rightarrow, from=0, to=3]
	\arrow["{\alpha_{WVU}\overleftarrow{\H^R}_{VUW}\alpha_{VUW}}"{description, pos=0.3}, shift right=4, shorten <=4pt, shorten >=17pt, Rightarrow, from=6-1, to=2]
\end{tikzcd}\]
which reads explicitly as, 
\begin{align}
&-\alpha_{WVU}(\sigma_{VW}\otimes1_U)\alpha^{-1}_{VWU}\H^L_{UVW}+\alpha_{WVU}\sigma_{1_U\,\boxtimes \,\sigma_{VW}}\alpha_{UVW}+\H^L_{UWV}\alpha^{-1}_{(UW)V}(1_U\otimes\sigma_{VW})\alpha_{UVW}\nn\\&=\label{eqn:Breen axiom}\\&-\alpha_{WVU}\H^R_{VUW}\alpha_{VUW}(\sigma_{UV}\otimes1_W)-\sigma_{\sigma_{UV}\,\boxtimes \,1_W}+(1_W\otimes\sigma_{UV})\alpha_{WUV}\H^R_{UVW}\alpha_{UVW}\quad.\nn
\end{align}
\end{defi}

\begin{defi}\label{def:hexagonal braiding}
A \textbf{hexagonal braiding} $\sigma$ on a strict monoidal $\Ch^{[-1,0]}_R$-category $(\CC,\otimes,I)$ is a pseudonatural isomorphism $\sigma:\otimes\Rightarrow\otimes\,\tau_{\CC,\CC}$ which satisfies the following three axioms:
\begin{enumerate}
\item \textbf{Strict left hexagon axiom},
\begin{subequations}
\begin{equation}\label{eq:strict Left hexagon axiom}
\sigma_{\id_{\CC}\,\boxtimes\,\otimes}\,\equiv\,\otimes\left([1\boxtimes\sigma]_{\tau_{\mathbf{C,C}}\,\boxtimes\,\id_\CC}(\sigma\boxtimes1)\right)\quad.
\end{equation}
\item \textbf{Strict right hexagon axiom},
\begin{equation}\label{eq:strict Right hexagon axiom}
\sigma_{\otimes\,\boxtimes\,\id_{\CC}}\,\equiv\,\otimes\left([\sigma\boxtimes1]_{\id_\CC\,\boxtimes\,\tau_{\mathbf{C,C}}}(1\boxtimes\sigma)\right)\quad.
\end{equation}
\item \textbf{Strict hexagonal Breen polytope},
\begin{equation}\label{eqn:strict Breen axiom}
\sigma_{1_U\,\boxtimes \,\sigma_{VW}}+\sigma_{\sigma_{UV}\,\boxtimes \,1_W}=0\quad.
\end{equation}
\end{subequations}
\end{enumerate}
\end{defi}
\begin{rem}\label{rem:symstrict mon ChCat defi}
If $\gamma:\otimes\Rightarrow\otimes\,\tau_{\CC,\CC}
$ is a $\Ch^{[-1,0]}_R$-natural \textbf{involution}\footnote{That is, a $\Ch^{[-1,0]}_R$-natural isomorphism which further satisfies \begin{equation}\label{involutory sigma}
\gamma_{\tau_{\CC,\CC}}\gamma=\Id_\otimes\qquad\implies\qquad\gamma_{VU}\gamma_{UV}=1_{(UV)}\quad.
\end{equation}}, then the involutive property of $\gamma$ together with the functoriality of $\otimes$ means that $\gamma$ satisfies the strict left hexagon axiom \eqref{eq:strict Left hexagon axiom},
\begin{subequations}
\begin{equation}
\gamma_{U(VW)}=(1_V\otimes\gamma_{UW})(\gamma_{UV}\otimes1_W)\quad,
\end{equation}
if and only if it satisfies the strict right hexagon axiom \eqref{eq:strict Right hexagon axiom},
\begin{equation}
\gamma_{(UV)W}=(\gamma_{UW}\otimes1_V)(1_U\otimes\gamma_{VW})\quad,
\end{equation}
\end{subequations}
in which case we call $(\CC,\otimes,I,\gamma)$ a \textbf{symmetric strict monoidal $\Ch^{[-1,0]}_R$-category}.
\end{rem}

\section{Syllepses}\label{sec:syllepses}
Following on from Subsection \ref{subs:brastrunmon cats}, we specify the general notion of a syllepsis on a braided monoidal bicategory as in \cite[Definition C.4]{Schommer} to our context of braided strictly-unital monoidal $\Ch^{[-1,0]}_R$-categories (see Definition \ref{def:bra str un mon cat}). The latter part of Definition \ref{def:syllepsis} translates the notion of a \textit{symmetric} syllepsis as in \cite[Definition C.5]{Schommer} to our context. We then introduce ``infinitesimal syllepses" on symmetric strict monoidal $\Ch^{[-1,0]}_R$-categories and show that 3-shifted Poisson structures give rise to them analogously to \cite[(3.25a)]{Us}. Remark \ref{rem:infsyllep easily integrates} discusses the curious fact that infinitesimal syllepses can be `integrated' to all orders simply by post-whiskering with the symmetric braiding $\gamma$.
\subsection{Symmetric syllepses on braided strictly-unital monoidal cochain 2-categories}\label{subsec:syllepsis defi}
\begin{defi}\label{def:syllepsis}
If $(\CC,\otimes,I,\alpha,\Pi,\sigma,\H^L,\H^R)$ is a braided strictly-unital monoidal $\Ch^{[-1,0]}_R$-category then a \textbf{syllepsis} $\Sigma$ consists of the following datum:
\begin{enumerate}
\item[(i)] A modification $\Sigma:\sigma\Rrightarrow\sigma^{-1}_{\tau_{\CC,\CC}}:\otimes\Rightarrow\otimes\,\tau_{\CC,\CC}:\CC\boxtimes\CC\rightarrow\CC$. Specifying Definition \ref{defi: modification}, this requires:
\begin{itemize}
\item $\forall U,V\in\CC$, we have the following homotopy $\Sigma_{UV}:\sigma_{UV}\Rightarrow\sigma^{-1}_{VU}:(UV)\rightarrow(VU)$, i.e.,
\begin{subequations}
\begin{equation}\label{eq:syllepsis is homotopy}
\partial\Sigma_{UV}=\sigma_{UV}-\sigma^{-1}_{VU}\quad.
\end{equation}
\item For all $f\in\CC[U,U']^0$ and $g\in\CC[V,V']^0$, 
\begin{equation}\label{eq:syllepsis is quasinatural}
\Sigma_{U'V'}(f\otimes g)+\sigma_{f\,\boxtimes\,g}=\sigma^{-1}_{g\,\boxtimes\,f}+(g\otimes f)\Sigma_{UV}\quad.
\end{equation}
\end{subequations}
\end{itemize}
\end{enumerate}
This datum is required to satisfy the following two axioms:

1. \textbf{Left factorisation}, i.e., for all $UVW\in\CC^{\,\boxtimes\,3}$, the component of the left hexagonator \eqref{eq:left hexagonator as 2-cell} is equal to the following pasting diagram in $\CC$,
\begin{subequations}
\begin{equation}
\begin{tikzcd}
	& {(U(VW))} &&& {((VW)U)} \\
	\\
	{((UV)W)} &&&&& {(V(WU))} \\
	\\
	& {((VU)W)} &&& {(V(UW))}
	\arrow[""{name=0, anchor=center, inner sep=0}, "{\sigma_{U(VW)}}", curve={height=-24pt}, from=1-2, to=1-5]
	\arrow[""{name=1, anchor=center, inner sep=0}, "{\sigma^{-1}_{(VW)U}}"', curve={height=18pt}, from=1-2, to=1-5]
	\arrow["{\alpha_{VWU}}", from=1-5, to=3-6]
	\arrow["{\alpha_{UVW}}", from=3-1, to=1-2]
	\arrow[""{name=2, anchor=center, inner sep=0}, "{\sigma_{UV}\otimes1_W}"', curve={height=30pt}, from=3-1, to=5-2]
	\arrow[""{name=3, anchor=center, inner sep=0}, "{\sigma_{VU}^{-1}\otimes1_W}", shift right=2, curve={height=-30pt}, from=3-1, to=5-2]
	\arrow[""{name=4, anchor=center, inner sep=0}, "{\alpha_{VUW}}"', from=5-2, to=5-5]
	\arrow[""{name=5, anchor=center, inner sep=0}, "{1_V\otimes\sigma_{UW}}"', curve={height=30pt}, from=5-5, to=3-6]
	\arrow[""{name=6, anchor=center, inner sep=0}, "{1_V\otimes\sigma_{WU}^{-1}}", shift right=2, curve={height=-30pt}, from=5-5, to=3-6]
	\arrow["{\Sigma_{U(VW)}}"{description}, Rightarrow, from=0, to=1]
	\arrow["{(\H^R)^{-1}_{VWU}}"{description, pos=0.6}, shorten <=15pt, Rightarrow, from=1, to=4]
	\arrow["{\overleftarrow{\Sigma}_{UV}\otimes1_W}"{description}, Rightarrow, from=3, to=2]
	\arrow["{1_V\otimes\overleftarrow{\Sigma}_{UW}}"{description}, Rightarrow, from=6, to=5]
\end{tikzcd}\qquad.
\end{equation}
In other words, 
\begin{equation}\label{eq:left factorisation}
\H^L_{UVW}=\alpha_{VWU}\Sigma_{U(VW)}\alpha_{UVW}+(\H^R)^{-1}_{VWU}-(1_V\otimes\sigma^{-1}_{WU})\alpha_{VUW}(\Sigma_{UV}\otimes1_W)-(1_V\otimes\Sigma_{UW})\alpha_{VUW}(\sigma_{UV}\otimes1_W)\,.
\end{equation}
\end{subequations}

2. \textbf{Right factorisation}, i.e., for all $UVW\in\CC^{\,\boxtimes\,3}$, the component of the right hexagonator \eqref{eq:right hexagonator as 2-cell} is equal to the following pasting diagram in $\CC$,
\begin{subequations}
\begin{equation}
\begin{tikzcd}
	& {((UV)W)} &&& {(W(UV))} \\
	\\
	{(U(VW))} &&&&& {((WU)V)} \\
	\\
	& {(U(WV))} &&& {((UW)V)}
	\arrow[""{name=0, anchor=center, inner sep=0}, "{\sigma_{(UV)W}}", curve={height=-24pt}, from=1-2, to=1-5]
	\arrow[""{name=1, anchor=center, inner sep=0}, "{\sigma_{W(UV)}^{-1}}"', curve={height=18pt}, from=1-2, to=1-5]
	\arrow["{\alpha_{WUV}^{-1}}", from=1-5, to=3-6]
	\arrow["{\alpha^{-1}_{UVW}}", from=3-1, to=1-2]
	\arrow[""{name=2, anchor=center, inner sep=0}, "{1_U\otimes\sigma_{VW}}"', curve={height=30pt}, from=3-1, to=5-2]
	\arrow[""{name=3, anchor=center, inner sep=0}, "{1_U\otimes\sigma^{-1}_{WV}}", shift right, curve={height=-30pt}, from=3-1, to=5-2]
	\arrow[""{name=4, anchor=center, inner sep=0}, "{\alpha^{-1}_{UWV}}"', from=5-2, to=5-5]
	\arrow[""{name=5, anchor=center, inner sep=0}, "{\sigma_{UW}\otimes1_V}"', curve={height=30pt}, from=5-5, to=3-6]
	\arrow[""{name=6, anchor=center, inner sep=0}, "{\sigma_{WU}^{-1}\otimes1_V}", shift right, curve={height=-30pt}, from=5-5, to=3-6]
	\arrow["{\Sigma_{(UV)W}}"{description}, Rightarrow, from=0, to=1]
	\arrow["{(\H^L)^{-1}_{WUV}}"{description, pos=0.6}, shorten <=15pt, Rightarrow, from=1, to=4]
	\arrow["{1_U\otimes\overleftarrow{\Sigma}_{VW}}"{description}, Rightarrow, from=3, to=2]
	\arrow["{\overleftarrow{\Sigma}_{UW}\otimes1_V}"{description}, Rightarrow, from=6, to=5]
\end{tikzcd}\quad.
\end{equation}
In other words, 
\begin{equation}\label{eq:right factorisation}
\H^R_{UVW}=\alpha^{-1}_{WUV}\Sigma_{(UV)W}\alpha^{-1}_{UVW}+(\H^L)^{-1}_{WUV}-(\sigma^{-1}_{WU}\otimes1_V)\alpha^{-1}_{UWV}(1_U\otimes\Sigma_{VW})-(\Sigma_{UW}\otimes1_V)\alpha^{-1}_{UWV}(1_U\otimes\sigma_{VW})\,.
\end{equation}
\end{subequations}
A syllepsis is said to be \textbf{symmetric} if, for all $UV\in\CC\boxtimes\CC$, we have the following equality between pasting diagrams in $\CC$, 
\begin{subequations}
\begin{equation}
\begin{tikzcd}
	{(UV)} &&&& {(VU)} \\
	\\
	\\
	\\
	{(VU)} &&&& {(UV)}
	\arrow[""{name=0, anchor=center, inner sep=0}, "{\sigma_{UV}}", from=1-1, to=1-5]
	\arrow["{\sigma_{UV}}"', from=1-1, to=5-1]
	\arrow[""{name=1, anchor=center, inner sep=0}, "{1_{(UV)}}"{description}, from=1-1, to=5-5]
	\arrow["{\sigma_{VU}}"', from=5-1, to=5-5]
	\arrow["{\sigma_{UV}}"', from=5-5, to=1-5]
	\arrow["{\sigma_{VU}\overleftarrow{\Sigma}_{UV}}"{description}, shorten <=6pt, Rightarrow, from=1, to=5-1]
	\arrow["{\Id_{\sigma_{UV}}}"{description, pos=0.3}, shorten >=10pt, Rightarrow, from=0, to=5-5]
\end{tikzcd}\quad\equiv\quad
\begin{tikzcd}
	{(UV)} &&&& {(VU)} \\
	\\
	\\
	\\
	{(VU)} &&&& {(UV)}
	\arrow[""{name=0, anchor=center, inner sep=0}, "{\sigma_{UV}}", from=1-1, to=1-5]
	\arrow["{\sigma_{UV}}"', from=1-1, to=5-1]
	\arrow[""{name=1, anchor=center, inner sep=0}, "{1_{(VU)}}"{description}, from=5-1, to=1-5]
	\arrow["{\sigma_{VU}}"', from=5-1, to=5-5]
	\arrow["{\sigma_{UV}}"', from=5-5, to=1-5]
	\arrow["{\Id_{\sigma_{UV}}}"{description, pos=0.3}, shorten <=10pt, Rightarrow, from=0, to=5-1]
	\arrow["{\sigma_{UV}\overleftarrow{\Sigma}_{VU}}"{description}, shorten <=13pt, Rightarrow, from=1, to=5-5]
\end{tikzcd}
\end{equation}
In other words,
\begin{equation}\label{eq:symmetric syllepsis}
\Sigma_{UV}=\sigma^{-1}_{VU}\Sigma_{VU}\sigma_{UV}\qquad.
\end{equation}
\end{subequations}
\end{defi}
\subsubsection{Infinitesimal syllepses on symmetric strict monoidal cochain 2-categories}\label{subsub:inf syllep on symstrmon}
\begin{rem}\label{rem:symmetrically-sylleptic symmetric strict}
Following on from Remark \ref{rem:symstrict mon ChCat defi}, a symmetrically-sylleptic symmetric strict monoidal $\Ch^{[-1,0]}_R$-category $(\CC,\otimes,I,\gamma,\Sigma)$ is such that the endomorphic modification $\Sigma:\gamma\Rrightarrow\gamma$ is ``primitive" and intertwines with the symmetric braiding $\gamma$. For clarity's sake, let us specialise Definition \ref{def:syllepsis} to the present context:
\begin{enumerate}
\item We have a family of coexact homotopies $\Sigma_{UV}\in\CC[(UV),(VU)]^{-1}$, i.e.,
\begin{subequations}
\begin{equation}\label{eq:syllep on symstr is coexact}
\partial\Sigma_{UV}=0\quad.
\end{equation}
\item The family of coexact homotopies is natural, i.e., for all $f\in\CC[U,U']^0$ and $g\in\CC[V,V']^0$, 
\begin{equation}\label{eq:syllep on symstr is natural}
\Sigma_{U'V'}(f\otimes g)=(g\otimes f)\Sigma_{UV}\quad.
\end{equation}
\item The coexact homotopies are \textbf{strictly left factorisable}, i.e., for all $UVW\in\CC^{\,\boxtimes\,3}$, 
\begin{equation}\label{eq:strictly left factorisable}
\Sigma_{U(VW)}=(1_V\otimes\gamma_{UW})(\Sigma_{UV}\otimes1_W)+(1_V\otimes\Sigma_{UW})(\gamma_{UV}\otimes1_W)\quad.
\end{equation}
\item The coexact homotopies are \textbf{strictly right factorisable}, i.e., for all $UVW\in\CC^{\,\boxtimes\,3}$, 
\begin{equation}\label{eq:strictly right factorisable}
\Sigma_{(UV)W}=(\gamma_{UW}\otimes1_V)(1_U\otimes\Sigma_{VW})+(\Sigma_{UW}\otimes1_V)(1_U\otimes\gamma_{VW})\quad.
\end{equation}
\item The coexact homotopies are $\gamma$-equivariant, i.e., for all $UV\in\CC\boxtimes\CC$,
\begin{equation}
\Sigma_{UV}=\gamma_{UV}\Sigma_{VU}\gamma_{UV}\quad.
\end{equation}
\end{subequations}
\end{enumerate}
\end{rem}

\begin{defi}\label{def:infinitesimal syllepsis}
An \textbf{infinitesimal syllepsis} $T$ on a symmetric strict monoidal $\Ch^{[-1,0]}_R$-category $(\CC,\otimes,I,\gamma)$ consists of the following datum:
\begin{enumerate}
\item[(i)] An endomodification $T:\Id_\otimes\Rrightarrow\Id_\otimes:\otimes\Rightarrow\otimes:\CC\boxtimes\CC\rightarrow\CC$. Again, this requires:
\begin{itemize}
\item A family of coexact endohomotopies $T_{UV}\in\CC[(UV),(UV)]^{-1}$, i.e.,
\begin{subequations}
\begin{equation}\label{eq:infsyllep on symstr is coexact}
\partial T_{UV}=0\quad.
\end{equation}
\item The family of coexact endohomotopies is natural, i.e.,
\begin{equation}\label{eq:infsyllep on symstr is natural}
T_{U'V'}(f\otimes g)=(f\otimes g)T_{UV}\quad.
\end{equation}
\end{subequations}
\end{itemize}
\end{enumerate}
This datum is required to satisfy the following two axioms:
\begin{enumerate}
\item \textbf{Left infinitesimal factorisation}, i.e., for all $UVW\in\CC^{\,\boxtimes\,3}$, 
\begin{subequations}
\begin{equation}\label{eq:left inf factorisation}
T_{U(VW)}=T_{UV}\otimes1_W+(\gamma_{VU}\otimes1_W)(1_V\otimes T_{UW})(\gamma_{UV}\otimes1_W)\quad.
\end{equation}
\item \textbf{Right infinitesimal factorisation}, i.e., for all $UVW\in\CC^{\,\boxtimes\,3}$, 
\begin{equation}\label{eq:right inf factorisation}
T_{(UV)W}=1_U\otimes T_{VW}+(1_U\otimes\gamma_{WV})(T_{UW}\otimes1_V)(1_U\otimes\gamma_{VW})\quad.
\end{equation}
\end{subequations}
\end{enumerate}
An infinitesimal syllepsis is said to be \textbf{symmetric} if, for all $UV\in\CC\boxtimes\CC$,
\begin{equation}\label{eq:symmetric infinitesimal syllepsis}
T_{UV}=\gamma_{VU}T_{VU}\gamma_{UV}\qquad.
\end{equation}
\end{defi}
\begin{rem}\label{rem:infsyllep easily integrates}
Recall that an infinitesimal braiding $t:\otimes\Rightarrow\otimes$ on a symmetric strict monoidal $\Vec_\bbK$-category $(\CC,\otimes,I,\gamma)$ does not admit the simple integration as the braided \textit{strict} monoidal $\Vec_{\bbK[[h]]}$-category $(\CC[[h]],\otimes,I,\sigma:=\gamma\,e^{\frac{h}{2}t})$ because, in general,$$e^{\frac{h}{2}(t_{13}+t_{23})}\neq e^{\frac{h}{2}t_{13}}e^{\frac{h}{2}t_{23}}\qquad,\qquad e^{\frac{h}{2}(t_{13}+t_{12})}\neq e^{\frac{h}{2}t_{13}}e^{\frac{h}{2}t_{12}}\quad.$$On the other hand, an infinitesimal syllepsis $T:\Id_\otimes\Rrightarrow\Id_\otimes$ on a symmetric strict monoidal $\Ch^{[-1,0]}_R$-category $(\CC,\otimes,I,\gamma)$ \textit{does}, in fact, admit an even simpler integration as the sylleptic symmetric strict monoidal $\Ch^{[-1,0]}_R$-category $(\CC,\otimes,I,\gamma,\Sigma:=\gamma T)$. This is because the strict factorisability relations \eqref{eq:strictly left factorisable}/\eqref{eq:strictly right factorisable} are already ``linear" whereas the strict hexagon axioms \eqref{eq:strict Left hexagon axiom}/\eqref{eq:strict Right hexagon axiom} are ``grouplike" and the infinitesimal hexagon relations \eqref{t_U(VW)}/\eqref{t_(UV)W} are linearisations thereof. 
\end{rem}
\subsection{Syllepses from 3-shifted Poisson structures}\label{subs:syllep from 3-shifted}
Recall the context of \cite[Subsection 3.2]{Us}. 
\begin{rem}
We know from \cite[Remark 2.7]{Us} that a 3-shifted Poisson structure $\pi_{n=3}$ on a semi-free CDGA $A$ is such that the weight 2 term
\begin{equation}
\pi^{(2)}_{n=3}\in\Sym^2_A\left(\T_A[-3-1]\right)^{3+2}\cong\Sym^2_A\left(\T_A[-1]\right)^{-1}
\end{equation}
satisfies $\dd_{\widehat{\Pol}}(\pi^{(2)}_{n=3})=0$ thus the $A$-linear cochain map from \cite[(3.22)]{Us},
\begin{equation}
\langle\,\cdot\,,\,\cdot\,\rangle :\Sym_A^2\big(\T_{\! A}[-1]\big)\otimes_A \Omega_A[1]\otimes_A\Omega_A[1]
\to A\quad,
\end{equation}
induces the coexact $A$-linear homotopy $\langle\pi^{(2)}_{n=3},\,\cdot\,\rangle\in{}_A\CC\left[\Omega_A[1]\otimes_A\Omega_A[1],A\right]^{-1}$.
\end{rem}
\begin{constr}\label{con:syllep from 3-shifted}
Let us also recall the $A$-linear cochain maps $\xi_M:M\to\Omega_A[1]\otimes_AM$ from \cite[(3.16c)]{Us}. Though the family of such maps is not natural but \textit{pseudo}natural, the following family of coexact $A$-linear homotopies
\begin{flalign}\label{eqn:TfromPoissoncomponents1}
\begin{gathered}
\xymatrix@C=9.5em@R=3em{
M\otimes_A N \ar[d]_-{\xi_M\otimes_A \xi_N} \ar[r]^-{T_{MN}}~&~(M\otimes_AN)[-1] \\
\Omega_A[1]\otimes_A M\otimes_A \Omega_A[1]\otimes_A N  \ar[r]_-{1_{\Omega_A[1]}\otimes_A\gamma_{M,\Omega_A[1]}\otimes_A1_N}~&~
\Omega_A[1]\otimes_A \Omega_A[1]\otimes_A M\otimes_A N \ar[u]_-{\langle\pi^{(2)}_{n=3},\,\cdot\,\rangle\otimes_A1_{M\otimes_AN}}
}
\end{gathered}
\end{flalign}
is actually natural because of the coexactness of $\langle\pi^{(2)}_{n=3},\,\cdot\,\rangle\in{}_A\CC\left[\Omega_A[1]\otimes_A\Omega_A[1],A\right]^{-1}$ and the truncation on the homs. It can also be shown to satisfy the infinitesimal factorisation relations \eqref{eq:left inf factorisation}/\eqref{eq:right inf factorisation} and the symmetry relation \eqref{eq:symmetric infinitesimal syllepsis} by analogy with \cite[Proof of Theorem 3.10]{Us}. 
\end{constr}
Construction \ref{con:syllep from 3-shifted} explicitly describes the symmetric infinitesimal syllepsis $T$ induced on the symmetric strict monoidal $\Ch^{[-1,0]}_\bbK$-category $\left(\mathsf{Ho}_2({}_A\dgMod^\sf),\otimes_A,A,\gamma\right)$ by the weight 2 term $\pi^{(2)}_{n=3}$ of a 3-shifted Poisson structure on the semi-free CDGA $A$, its components' explicit basis expression look analogous to \cite[(3.29a)]{Us}. Remark \ref{rem:infsyllep easily integrates} tells us that this symmetric infinitesimal syllepsis $T$ easily integrates to the symmetric syllepsis $\Sigma:=\gamma T$ on the symmetric strict monoidal $\Ch^{[-1,0]}_\bbK$-category $\left(\mathsf{Ho}_2({}_A\dgMod^\sf),\otimes_A,A,\gamma\right)$.


\section{Infinitesimal 2-braidings and coboundary infinitesimal syllepses}\label{sec:inf2bra and coboundary syllepses}
This section works in the opposite direction to that of \cite[Subsection 3.1]{Us}. That is, rather than starting with a first-order deformation of the symmetric braiding \cite[(3.4)]{Us} and then proceeding to derive the imposed relations on the pseudonatural endomorphism $t:\otimes\Rightarrow\otimes$, we start with the definition of a ``strict" infinitesimal 2-braiding and then proceed to show, in the proof to Theorem \ref{theo:hexbrastrmon iff strBreen}, that such a condition is necessary and sufficient for the deformed braiding to satisfy the strict left/right hexagon axiom \eqref{eq:strict Left hexagon axiom}/\eqref{eq:strict Right hexagon axiom}. However, our notion of a ``coboundary infinitesimal syllepsis" does not follow this latter order, i.e., we start with the first-order deformation quantisation then ask what relations a symmetric syllepsis of the form $\Sigma=h\gamma T$ (on the aforementioned hexagonally-braided strict monoidal $\Ch^{[-1,0]}_R$-category) imposes on $T$.
\subsection{Infinitesimal 2-braidings on symmetric strict monoidal cochain 2-categories}\label{subs:notation for t}
\begin{defi}\label{def: strict t}
An \textbf{infinitesimal 2-braiding} on a symmetric strict monoidal $\Ch^{[-1,0]}_R$-category $(\CC,\otimes,I,\gamma)$ is a pseudonatural endomorphism $t:\otimes\Rightarrow\otimes:\CC\,\boxtimes\,\CC\rightarrow\CC$. We say it is \textbf{strict} if it satisfies both the \textbf{left infinitesimal hexagon relation},
\begin{subequations}
\begin{equation}\label{t_U(VW)}
t_{\id_\CC\,\boxtimes\,\otimes}\equiv\otimes(t\boxtimes1)+\otimes\left([\gamma^{-1}\boxtimes1](1\boxtimes t)_{\tau_{\CC,\CC}\,\boxtimes\,\id_\CC}[\gamma\boxtimes1]\right)\quad,
\end{equation}
and the \textbf{right infinitesimal hexagon relation},
\begin{equation}\label{t_(UV)W}
t_{\otimes\,\boxtimes\,\id_\CC}\equiv\otimes(1\boxtimes t)+\otimes\left([1\boxtimes\gamma^{-1}](t\boxtimes1)_{\id_\CC\,\boxtimes\,\tau_{\CC,\CC}}[1\boxtimes\gamma]\right)\quad.
\end{equation}
\end{subequations}
\end{defi}
As in \cite[Remark 16]{Joao}, we say a $\emph{strict}$ infinitesimal 2-braiding to contrast with the more general scenario of \eqref{t_U(VW)} and \eqref{t_(UV)W} being replaced with modifications which satisfy higher coherence conditions\footnote{We will see discussion of hexagonators become relevant at \emph{second} order in the deformation parameter $h$.}. We wish to keep the adjective ``strict" rather than absorb it into the name of ``infinitesimal 2-braiding" because it will be interesting throughout to note which lemmas and propositions depend on strictness and which do not.
\begin{rem}\label{rem: t_U(VW) explicitly}
Reading \eqref{t_U(VW)} as an equality between the cochain components we reproduce the familiar left infinitesimal hexagon relation, namely for $U,V,W\in\CC$,
\begin{subequations}\label{subeq:left inf hex relations explicitly}
\begin{equation}\label{eq:cochain left infhex}
t_{U(VW)}=t_{UV}\otimes1_W+(\gamma_{VU}\otimes1_W)(1_V\otimes t_{UW})(\gamma_{UV}\otimes1_W)
\end{equation}
and as an equality between the homotopy components, for $f\in\CC[U,U']^0$, $g\in\CC[V,V']^0$ and $k\in\CC[W,W']^0$, we find\footnote{Recall that $\gamma$ is a $\Ch^{[-1,0]}_R$-natural transformation hence its homotopy components are trivial, i.e., $\gamma_{f\,\boxtimes \,g}=0$.} something analogous
\begin{equation}\label{eq:homotopy left infhex}
t_{f\,\boxtimes \,(g\otimes k)}=t_{f\,\boxtimes \,g}\otimes k+(\gamma_{V'U'}\otimes1_{W'})(g\otimes t_{f\,\boxtimes \,k})(\gamma_{UV}\otimes1_W)\quad.
\end{equation}
\end{subequations}
Likewise, reading \eqref{t_(UV)W} as an equality between the cochain components we reproduce the familiar right infinitesimal hexagon relation, 
\begin{subequations}\label{subeq:right inf hex relations explicitly}
\begin{equation}\label{eq:cochain right infhex}
t_{(UV)W}=1_U\otimes t_{VW}+(1_U\otimes\gamma_{WV})(t_{UW}\otimes1_V)(1_U\otimes\gamma_{VW})
\end{equation}
and as an equality between the homotopy components, we find
\begin{equation}\label{eq:homotopy right infhex}
t_{(f\otimes g)\,\boxtimes \,k}=f\otimes t_{g\,\boxtimes \,k}+(1_{U'}\otimes\gamma_{W'V'})(t_{f\,\boxtimes \,k}\otimes g)(1_U\otimes\gamma_{VW})\quad.
\end{equation}
\end{subequations}
\end{rem}
\begin{defi}\label{def:sym t}
An infinitesimal 2-braiding $\otimes\xRightarrow{t}\otimes$ is said to be \textbf{symmetric} if it intertwines with the symmetric braiding $\gamma$, i.e.,
\begin{equation}\label{intertwining}
\gamma\,t\,\equiv\,t_{\tau_{\CC,\CC}}\gamma\quad.
\end{equation}
Plainly: 
\begin{subequations}
\begin{alignat}{2}
\label{intertwine single-index}\gamma_{UV}t_{UV}=\,&t_{VU}\gamma_{UV}\quad&&,\\\label{intertwine homotopy}
\gamma_{U'V'}t_{f\,\boxtimes \,g}=\,&t_{g\,\boxtimes \,f}\gamma_{UV}\quad&&.
\end{alignat}
\end{subequations}
\end{defi}
\begin{lem}\label{lem:sym t then half conditions}
If an infinitesimal 2-braiding $\otimes\xRightarrow{t}\otimes$ is symmetric then \eqref{t_U(VW)}$\iff$\eqref{t_(UV)W}.
\end{lem}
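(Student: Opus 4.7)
The proof rests on the fact that the symmetry intertwining \eqref{intertwine single-index} can be rearranged (using $\gamma_{VU}=\gamma_{UV}^{-1}$, which holds by involutivity \eqref{involutory sigma}) into the conjugation identity
\begin{equation*}
t_{UV}\,=\,\gamma_{VU}\,t_{VU}\,\gamma_{UV}\quad,
\end{equation*}
and the analogous identity at the level of homotopy components from \eqref{intertwine homotopy}. The second key ingredient is that the symmetric braiding $\gamma$ satisfies \emph{both} strict hexagon axioms (Remark \ref{rem:symstrict mon ChCat defi}), so that $\gamma_{U(VW)}=(1_V\otimes\gamma_{UW})(\gamma_{UV}\otimes 1_W)$ and dually $\gamma_{(VW)U}=\gamma_{U(VW)}^{-1}=(\gamma_{VU}\otimes 1_W)(1_V\otimes\gamma_{WU})$. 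The plan is to transport the left hexagon relation through these two ingredients to land on the right one.

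Concretely, starting from the cochain-component left hexagon \eqref{eq:cochain left infhex}, I first apply the conjugation identity to each of the three occurrences of $t$, namely to $t_{U(VW)}$, $t_{UV}$ and $t_{UW}$. The LHS becomes $\gamma_{(VW)U}\,t_{(VW)U}\,\gamma_{U(VW)}$, while the RHS becomes
\begin{equation*}
(\gamma_{VU}\otimes 1_W)\Bigl[(t_{VU}\otimes 1_W)+(1_V\otimes\gamma_{WU})(1_V\otimes t_{WU})(1_V\otimes\gamma_{UW})\Bigr](\gamma_{UV}\otimes 1_W)\quad.
\end{equation*}
Substituting the hexagon decompositions of $\gamma_{U(VW)}$ and $\gamma_{(VW)U}$ into the LHS, the outer factors $\gamma_{VU}\otimes 1_W$ on the left and $\gamma_{UV}\otimes 1_W$ on the right cancel against those on the RHS. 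Finally, multiplying the resulting identity on the left by $1_V\otimes\gamma_{UW}$ and on the right by $1_V\otimes\gamma_{WU}$ yields exactly the right infinitesimal hexagon relation \eqref{eq:cochain right infhex} at the object $(VW)U$, and the relabelling $V\mapsto U,\ W\mapsto V,\ U\mapsto W$ gives it for arbitrary $UVW$. The converse direction is entirely symmetric.

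For the homotopy-component equivalence \eqref{eq:homotopy left infhex}$\iff$\eqref{eq:homotopy right infhex}, the argument is identical, with \eqref{intertwine homotopy} playing the role of \eqref{intertwine single-index}. Because $\gamma$ is a $\Ch^{[-1,0]}_R$-natural transformation its homotopy components vanish (as recalled in the footnote of Remark \ref{rem: t_U(VW) explicitly}), so conjugating $t$-homotopies through $\gamma$-factors produces no extra boundary contributions, and the hexagon decomposition of $\gamma_{U(VW)}$ still factorises the outer $\gamma$-terms in exactly the same manner.

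The expected obstacle is not conceptual but notational: one must consistently track which hexagon axiom for $\gamma$ is being invoked on each side of the conjugation, and ensure that the invertible $\Ch^{[-1,0]}_R$-natural factors $\gamma_{UV}\otimes 1_W$ cancel cleanly on both sides before the inner $(1_V\otimes\gamma_{UW}^{\pm 1})$-factors are absorbed into $t_{(VW)U}$. Beyond the involutivity \eqref{involutory sigma} and the strict hexagon identities for $\gamma$ from Remark \ref{rem:symstrict mon ChCat defi}, no further structural input is required.
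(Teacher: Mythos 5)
Your proof is correct and follows essentially the same route as the paper's: both hinge on the conjugation identity $t_{UV}=\gamma_{VU}\,t_{VU}\,\gamma_{UV}$ (symmetry plus involutivity) together with the hexagon axiom for $\gamma$ and functoriality of $\otimes$, differing only in that you transport the left relation forward into the right one (with a final relabelling) whereas the paper expands $t_{(UV)W}$ backward through the same identities. Your handling of the homotopy components also matches the paper's remark that the argument carries over verbatim using \eqref{intertwine homotopy}.
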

\begin{proof}
We simply show that \eqref{eq:cochain left infhex}$\implies$\eqref{eq:cochain right infhex},
\begin{align*}
t_{(UV)W}=&\gamma_{W(UV)}t_{W(UV)}\gamma_{(UV)W}\\=&(1_U\otimes\gamma_{WV})(\gamma_{WU}\otimes1_V)\left[t_{WU}\otimes1_V+(\gamma_{UW}\otimes1_V)(1_U\otimes t_{WV})(\gamma_{WU}\otimes1_V)\right](\gamma_{UW}\otimes1_V)(1_U\otimes\gamma_{VW})\\=&1_U\otimes t_{VW}+(1_U\otimes\gamma_{WV})(t_{UW}\otimes1_V)(1_U\otimes\gamma_{VW})\quad,
\end{align*}
where: the first equality comes from the involutivity of $\gamma$ and the symmetry property \eqref{intertwine single-index}, the second equality comes from the hexagon axiom that $\gamma$ satisfies (as in Remark \ref{rem:symstrict mon ChCat defi}) together with the cochain components of the left infinitesimal hexagon relation \eqref{eq:cochain left infhex} and the last equality uses the functoriality of $\otimes$ together with, again, the involutivity of $\gamma$ and the symmetry property \eqref{intertwine single-index}. Analogous arguments show \eqref{eq:homotopy left infhex}$\implies$\eqref{eq:homotopy right infhex} upon using the homotopy components of the symmetry property \eqref{intertwine homotopy}. Obviously, showing \eqref{subeq:right inf hex relations explicitly}$\implies$\eqref{subeq:left inf hex relations explicitly} works the same way.
\end{proof}
\begin{defi}\label{def:Breen t}
An infinitesimal 2-braiding $\otimes\xRightarrow{t}\otimes$ is \textbf{Breen} if it satisfies
\begin{equation}\label{eq:Breen t}
\gamma_{U(WV)}t_{1_U\,\boxtimes\,\gamma_{VW}}+\gamma_{(VU)W}t_{\gamma_{UV}\,\boxtimes\,1_W}=0\quad.
\end{equation}
\end{defi}
\begin{ex}\label{ex:Poisson inf 2bra is Breen}
It is clear from \cite[(3.28) and (3.29)]{Us} that those symmetric strict infinitesimal 2-braidings induced by 2-shifted Poisson structures are also Breen.
\end{ex}
\begin{theo}\label{theo:hexbrastrmon iff strBreen}
An infinitesimal 2-braiding $t$ provides a first-order deformation quantisation of the symmetric strict monoidal $\Ch^{[-1,0]}_\bbK$-category $(\CC,\otimes,I,\gamma)$ to a hexagonally braided strict monoidal $\Ch^{[-1,0]}_{\bbK[h]/(h^2)}$-category 
\begin{equation}
\left(\CC_{\bbK[h]/(h^2)},\otimes,I,\sigma:\equiv\gamma[1+\tfrac{h}{2}t]\right)
\end{equation}
if and only if $t$ is a strict Breen infinitesimal 2-braiding.
\end{theo}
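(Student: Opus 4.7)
The plan is to substitute the ansatz $\sigma\equiv\gamma(1+\tfrac{h}{2}t)$, where $1=\Id_\otimes$, into each of the three axioms of Definition \ref{def:hexagonal braiding} and verify that, after $\bbK[h]/(h^2)$-linear extension, the order-$h^1$ part of each axiom is equivalent to the corresponding defining relation of a strict Breen infinitesimal 2-braiding (the order-$h^0$ parts being already satisfied by $\gamma$ as in Remark \ref{rem:symstrict mon ChCat defi}). Because both directions of the ``iff'' come from the same equality of orders, this strategy automatically yields both implications.

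First I would record the components of $\sigma$. Using Constructions \ref{con:vert comp pseudo} and \ref{con:add mods}, together with the fact that both $\gamma$ and $\Id_\otimes$ have vanishing homotopy components, one obtains
\begin{equation*}
\sigma_{UV}=\gamma_{UV}\big(1_{(UV)}+\tfrac{h}{2}t_{UV}\big)\qquad,\qquad\sigma_{f\boxtimes g}=\tfrac{h}{2}\,\gamma_{U'V'}\,t_{f\boxtimes g}\quad.
\end{equation*}
Since $1_{(UV)}+\tfrac{h}{2}t_{UV}$ admits the inverse $1_{(UV)}-\tfrac{h}{2}t_{UV}$ modulo $h^2$, Remark \ref{rem:pseudonatural endo is auto iff} guarantees that $\sigma$ is a pseudonatural isomorphism.

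Next I would expand the two strict hexagon axioms modulo $h^2$. For the cochain component of \eqref{eq:strict Left hexagon axiom} at $UVW\in\CC^{\boxtimes 3}$, the LHS equals $\gamma_{U(VW)}+\tfrac{h}{2}\gamma_{U(VW)}t_{U(VW)}$; expanding the RHS $(1_V\otimes\sigma_{UW})(\sigma_{UV}\otimes 1_W)$ via the functoriality of $\otimes$ and discarding $O(h^2)$ gives an $h^0$ part matching $\gamma_{U(VW)}$ by the strict left hexagon for $\gamma$, while the $h^1$ part, after premultiplication by $\gamma_{U(VW)}^{-1}=\gamma_{(VW)U}=(\gamma_{VU}\otimes 1_W)(1_V\otimes\gamma_{WU})$ and simplification, reproduces precisely the cochain form \eqref{eq:cochain left infhex}. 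The homotopy component is treated analogously using the vertical composition formula \eqref{eq:hom components of vercomp pseudos} together with the monoidal composition formula of Construction \ref{con:monprod pseudos}; the $t$-dependent cross-terms at order $h$ rearrange to \eqref{eq:homotopy left infhex}. The strict right hexagon axiom is handled symmetrically and recovers \eqref{subeq:right inf hex relations explicitly}. For the strict hexagonal Breen polytope \eqref{eqn:strict Breen axiom}, both summands are homotopy components of $\sigma$ at 1-cells that themselves involve $\sigma$; substituting $\sigma_{VW}=\gamma_{VW}+\tfrac{h}{2}\gamma_{VW}t_{VW}$ (and likewise for $\sigma_{UV}$) and using $\bbK[h]/(h^2)$-linearity of $t_{f\boxtimes g}$ in its morphism argument, the $\tfrac{h}{2}\gamma t$-corrections contribute at order $h^2$ and vanish, so the surviving order-$h$ equation reads
\begin{equation*}
\gamma_{U(WV)}\,t_{1_U\boxtimes\gamma_{VW}}+\gamma_{(VU)W}\,t_{\gamma_{UV}\boxtimes 1_W}=0\quad,
\end{equation*}
which is exactly the Breen condition \eqref{eq:Breen t}; the $h^0$ part is automatic since $\gamma$ is $\Ch_R^{[-1,0]}$-natural and hence has trivial homotopy components.

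The main obstacle lies in the homotopy-component verification of the two hexagon axioms: one has to carefully unpack the whiskerings of $\sigma$ by $\tau$ and $\otimes$ inside the vertical and monoidal composition formulas, and rely on the truncation of the homs to kill spurious cross terms arising from two degree-$(-1)$ factors. Conceptually, however, this is a straightforward extension of the cochain-component calculation and introduces no new ideas beyond those already gathered in Subsection \ref{subsec:tricat universe}.
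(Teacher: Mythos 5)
Your proposal is correct, and the substance of the verification (order $h^0$ reduces to the axioms for $\gamma$, order $h^1$ reduces to the strict/Breen conditions on $t$, and the equivalence at each order gives both directions of the ``iff'') is exactly what the theorem requires. The route differs from the paper's in one respect: you work entirely at the level of components, checking the cochain components and homotopy components of each axiom separately at every $UVW$ and $f\boxtimes g\boxtimes k$, whereas the paper stays at the level of equalities between pseudonatural transformations, expanding $(\gamma[1+\tfrac{h}{2}t])_{\id_\CC\,\boxtimes\,\otimes}$ and the whiskered products using the functoriality of whiskering, the triviality criteria for the exchanger (Remark \ref{rem:whiskering and exchanger}) and the 3-functoriality of $\boxtimes$ (Remark \ref{rem:boxtimes is a strictly associative 3-functor}), before invoking the strict hexagon axiom for $\gamma$ once. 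The paper's formulation packages your cochain and homotopy computations into a single manipulation, which is precisely why it never has to confront the ``main obstacle'' you flag — the homotopy-component bookkeeping with $\sigma_{f\boxtimes g}=\tfrac{h}{2}\gamma_{U'V'}t_{f\boxtimes g}$, the vertical composition formula \eqref{eq:hom components of vercomp pseudos} and the truncation killing products of two degree $-1$ terms. Your component calculation does go through (your reduction of the order-$h$ cochain identity to \eqref{eq:cochain left infhex} after premultiplying by $\gamma_{(VW)U}=(\gamma_{VU}\otimes1_W)(1_V\otimes\gamma_{WU})$ is exactly right, as is the observation that the $\tfrac{h}{2}\gamma t$-corrections inside the subscripts of the Breen polytope land at order $h^2$ by $R$-linearity of $t_{(-)}$), so this is a matter of economy rather than correctness; the one place where you should be explicit if writing this up is the homotopy-component identity \eqref{eq:homotopy left infhex}, which you currently only assert.
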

\begin{proof}
Let us show that the order $h$ relation of the strict left hexagon axiom \eqref{eq:strict Left hexagon axiom} is equivalent to the left infinitesimal hexagon relation \eqref{t_U(VW)}. The order $h$ relation of the strict left hexagon axiom \eqref{eq:strict Left hexagon axiom} reads as
\begin{subequations}
\begin{equation}
(\gamma t)_{\id_{\CC}\,\boxtimes\,\otimes}\,\equiv\,\otimes\left([1\boxtimes\gamma]_{\tau_{\mathbf{C,C}}\,\boxtimes\,\id_\CC}(\gamma t\boxtimes1)\right)+\otimes\left([1\boxtimes\gamma t]_{\tau_{\mathbf{C,C}}\,\boxtimes\,\id_\CC}(\gamma\boxtimes1)\right)\quad.
\end{equation}
Let us use Remark \ref{rem:whiskering and exchanger} and the fact that $\boxtimes$ is a 3-functor (as in Remark \ref{rem:boxtimes is a strictly associative 3-functor}) to rewrite this as
\begin{equation}
\gamma_{\id_{\CC}\,\boxtimes\,\otimes}\,t_{\id_{\CC}\,\boxtimes\,\otimes}\,\equiv\,\otimes\left([1\boxtimes\gamma]_{\tau_{\mathbf{C,C}}\,\boxtimes\,\id_\CC}(\gamma\boxtimes1)(t\boxtimes1)\right)+\otimes\left([1\boxtimes\gamma]_{\tau_{\mathbf{C,C}}\,\boxtimes\,\id_\CC}[1\boxtimes t]_{\tau_{\mathbf{C,C}}\,\boxtimes\,\id_\CC}(\gamma\boxtimes1)\right)\quad.
\end{equation}
Using Remark \ref{rem:whiskering and exchanger} again, we rewrite the RHS as 
\begin{equation}
\otimes\left([1\boxtimes\gamma]_{\tau_{\mathbf{C,C}}\,\boxtimes\,\id_\CC}(\gamma\boxtimes1)\right)\otimes(t\boxtimes1)+\otimes[1\boxtimes\gamma]_{\tau_{\mathbf{C,C}}\,\boxtimes\,\id_\CC}\otimes\left([1\boxtimes t]_{\tau_{\mathbf{C,C}}\,\boxtimes\,\id_\CC}(\gamma\boxtimes1)\right)\quad.
\end{equation}
Finally, we use the fact that $\gamma$ satisfies the strict left hexagon axiom 
\begin{equation}
\gamma_{\id_{\CC}\,\boxtimes\,\otimes}\,\equiv\,\otimes\left([1\boxtimes\gamma]_{\tau_{\mathbf{C,C}}\,\boxtimes\,\id_\CC}(\gamma\boxtimes1)\right)\,\equiv\,\otimes[1\boxtimes\gamma]_{\tau_{\mathbf{C,C}}\,\boxtimes\,\id_\CC}\otimes(\gamma\boxtimes1)
\end{equation}
\end{subequations}
together with, again, the functoriality of whiskering as in Remark \ref{rem:whiskering and exchanger}. Analogously, one shows that the strict right hexagon axiom \eqref{eq:strict Right hexagon axiom} is equivalent to the right infinitesimal hexagon relation \eqref{t_(UV)W}. Lastly, the Breen property \eqref{eq:Breen t} of the infinitesimal 2-braiding is obviously equivalent to the order $h$ relation coming from the strict hexagonal Breen axiom \eqref{eqn:strict Breen axiom}.
\end{proof}
\begin{rem}
The braiding $\sigma:\equiv\gamma[1+\tfrac{h}{2}t]$ is not necessarily involutive because, modulo $h^2$,
\begin{subequations}
\begin{equation}
\sigma_{VU}\sigma_{UV}-1_{(UV)}=(\gamma_{VU}+\tfrac{h}{2}\gamma_{VU}t_{VU})(\gamma_{UV}+\tfrac{h}{2}\gamma_{UV}t_{UV})-1_{(UV)}=\tfrac{h}{2}(t_{UV}+\gamma_{VU}t_{VU}\gamma_{UV})
\end{equation}
is, in general, non-zero unless $t$ is \textbf{skew-symmetric}, i.e.,
\begin{equation}
t+(\gamma\,t)_{\tau_{\CC,\CC}}\gamma=0\quad.
\end{equation}
\end{subequations}
In particular, if $t$ is a nontrivial \textit{symmetric} strict Breen infinitesimal 2-braiding then the deformed braiding is necessarily \textit{not} involutive.
\end{rem}
\subsubsection{Coboundary syllepses from coboundary 2-shifted Poisson structures}\label{subsub:coboundary inf syllep}
The following remark is a specification of Definition \ref{def:syllepsis} and makes use of the formulae for the cochain/homotopy components of an inverse pseudonatural isomorphism \eqref{subeq:inverse pseudo iso formulae}.
\begin{rem}\label{rem:coboundary 2-shifted induces syllep}
Given a symmetric strict Breen infinitesimal 2-braiding $t$, consider the hexagonally braided strict monoidal $\Ch^{[-1,0]}_{\bbK[h]/(h^2)}$-category $\left(\CC_{\bbK[h]/(h^2)},\otimes,I,\sigma:\equiv\gamma[1+\tfrac{h}{2}t]\right)$ of Theorem \ref{theo:hexbrastrmon iff strBreen}. A symmetric syllepsis $\Sigma:=h\gamma T$ is such that $T$ must satisfy the five relations:
\begin{enumerate}
\item For all $UV\in\CC\boxtimes\CC$, 
\begin{subequations}
\begin{equation}\label{eq:syllep on hexbra is homot}
\partial T_{UV}=t_{UV}\quad.
\end{equation}
\item For all $f\in\CC[U,U']^0$ and $g\in\CC[V,V']^0$, 
\begin{equation}\label{eq:syllep on hexbra is quasinatural}
(f\otimes g)T_{UV}-T_{U'V'}(f\otimes g)=t_{f\,\boxtimes\,g}\quad.
\end{equation}
\item For all $UVW\in\CC^{\,\boxtimes\,3}$, 
\begin{equation}\label{hexbra left factorise}
T_{U(VW)}=T_{UV}\otimes1_W+(\gamma_{VU}\otimes1_W)(1_V\otimes T_{UW})(\gamma_{UV}\otimes1_W)\quad.
\end{equation}
\item For all $UVW\in\CC^{\,\boxtimes\,3}$, 
\begin{equation}\label{eq:hexbra right factorise}
T_{(UV)W}=1_U\otimes T_{VW}+(1_U\otimes\gamma_{WV})(T_{UW}\otimes1_V)(1_U\otimes\gamma_{VW})\quad.
\end{equation}
\item For all $UV\in\CC\boxtimes\CC$,
\begin{equation}\label{eq:symmetric syllepsis on hexbra}
T_{UV}=\gamma_{VU}T_{VU}\gamma_{UV}\qquad.
\end{equation}
\end{subequations}
\end{enumerate}
Because of \eqref{eq:syllep on hexbra is homot}, we refer to $T:t\Rrightarrow0$ as a \textbf{coboundary infinitesimal syllepsis}.
\end{rem}
\begin{constr}\label{con:coboundary infinitesimal syllepsis}
Following Example \ref{ex:Poisson inf 2bra is Breen}, suppose that the symmetric strict Breen infinitesimal 2-braiding $t$ is induced by a weight 2 term $\pi^{(2)}_{n=2}$ of a 2-shifted Poisson structure which is itself coboundary. In other words, suppose there exists $\tilde{\pi}^{(2)}_{n=2}\in\Sym^2_A\left(\T_A[-1]\right)^{-1}$ such that $\dd_{\widehat{\Pol}}(\tilde{\pi}^{(2)}_{n=2})=\pi^{(2)}_{n=2}$ then we can construct $T$ as in \eqref{eqn:TfromPoissoncomponents1} but now with $\pi_{n=3}^{(2)}$ replaced by $\tilde{\pi}^{(2)}_{n=2}$. The only relation in Remark \ref{rem:coboundary 2-shifted induces syllep} which remains to be checked is the ``quasinaturality" \eqref{eq:syllep on hexbra is quasinatural},
\begin{align}
T_{M'N'}(f\otimes_Ag)=&(\iota_{\tilde{\pi}^{(2)}_{n=2}})_{M'N'}(1\otimes_A\gamma_{M',\Omega_A[1]}\otimes_A1)(\xi_{M'}f\otimes_A \xi_{N'}g)\nn\\=&(\iota_{\tilde{\pi}^{(2)}_{n=2}})_{M'N'}(1\otimes_A\gamma_{M',\Omega_A[1]}\otimes_A1)\left(\big[(1_{\Omega_A[1]}\otimes_Af)\xi_M-\partial\xi_f\big]\otimes_A\big[(1_{\Omega_A[1]}\otimes_Ag)\xi_N-\partial\xi_g\big]\right)\nn\\=&(f\otimes_Ag)T_{MN}-\nn\\&(\iota_{\pi^{(2)}_{n=2}})_{M'N'}(1\otimes_A\gamma_{M',\Omega_A[1]}\otimes_A1)\left(\xi_f\otimes_A[1_{\Omega_A[1]}\otimes_Ag]\xi_N+\left[(1_{\Omega_A[1]}\otimes_Af)\xi_M-\partial\xi_f\right]\otimes_A\xi_g\right)\nn\\=&(f\otimes_Ag)T_{MN}-t_{f\,\boxtimes\,g}\qquad,
\end{align}
where the second equality uses the pseudonaturality of $\xi$, the third equality uses the hom-truncation and the last equality uses the definition \cite[(3.25b)]{Us} of the homotopy components of $t$.
\end{constr}
\subsection{Associativity of the monoidal product and subscript index notation}\label{subsec:index notation}
The following index notation follows that of \cite[(3.32)]{Us}.
\begin{rem}\label{rem: shorthand t notation}
The following point can not be stressed enough, the monoidal product $\otimes$ will \textit{always} be associative $\otimes(\otimes\boxtimes\id_\CC)=\otimes(\id_\CC\boxtimes\otimes)$; we are deforming the associator to a nontrivial pseudonatural \textit{auto}morphism. Thus, $\otimes^n:\CC^{\boxtimes(n+1)}\rightarrow\CC$ will always be well-defined $\forall n\in\bbN$ and this permits the following index notation:
\begin{enumerate}
\item We write things such as:
\begin{subequations}
\begin{equation}
t_{12}:\equiv\otimes(t\,\boxtimes\,\Id_{\id_\CC})\quad,\quad\alpha_{1(23)4}:\equiv\alpha_{\id_\CC\,\boxtimes\,\otimes\,\boxtimes\,\id_\CC}\quad,\quad\sigma_{13}:\equiv\otimes(\Id_{\id_\CC}\boxtimes\,\sigma)_{\tau_{\CC,\CC}\,\boxtimes\,\id_\CC}\,.
\end{equation}
\item The fact that $\gamma$ is natural, involutive and satisfies the hexagon axiom means that we have, for example, 
\begin{equation}
t_{13}:\equiv\otimes\left([\gamma^{-1}\boxtimes1](1\boxtimes t)_{\tau_{\CC,\CC}\,\boxtimes\,\id_\CC}[\gamma\boxtimes1]\right)\equiv\otimes\left([1\boxtimes\gamma^{-1}](t\boxtimes1)_{\id_\CC\,\boxtimes\,\tau_{\CC,\CC}}[1\boxtimes\gamma]\right)\,.
\end{equation}
\item\label{item:strict t is primitive} If the infinitesimal 2-braiding is strict then this means, for example,
\begin{equation}
t_{1(23)}:\equiv t_{\id_\CC\,\boxtimes\,\otimes}\equiv t_{12}+t_{13}\quad.
\end{equation}
In general, for $k\neq i<j\neq k$, we would have: 
\begin{equation}\label{eq:strict t notation}
t_{k(ij)}\equiv t_{ki}+t_{kj}\qquad,\qquad t_{(ij)k}\equiv t_{ik}+t_{jk}\quad.
\end{equation}
\item\label{item:symmetric t index swap} If $t$ is symmetric then this means, for example,
\begin{equation}
t_{12}:\equiv t\equiv\gamma^{-1}t_{\tau_{\CC,\CC}}\gamma\equiv:t_{21}\quad.
\end{equation}
In general, for $k\neq i\neq j\neq k$, we would have: 
\begin{equation}\label{eq:sym t notation}
t_{ij}\equiv t_{ji}\qquad,\qquad t_{(ij)k}\equiv t_{k(ij)}\quad.
\end{equation}
\end{subequations}
\end{enumerate}
\end{rem}
This subscript index notation allows us to rewrite: the pentagonator \eqref{eq:Pentagonator} as
\begin{subequations}
\begin{equation}\label{eq:index pentagonator}
\Pi:\alpha_{234}\alpha_{1(23)4}\alpha_{123}\Rrightarrow\alpha_{12(34)}\alpha_{(12)34}\quad,
\end{equation}
the left hexagonator \eqref{eq:left hexagonator} as
\begin{equation}\label{eq:index left hexagonator}
\H^L:\alpha_{231}\sigma_{1(23)}\alpha\Rrightarrow\sigma_{13}\alpha_{213}\sigma_{12}\quad,
\end{equation}
and the right hexagonator \eqref{eq:right hexagonator} as
\begin{equation}\label{eq:index right hexagonator}
\H^R:\alpha^{-1}_{312}\sigma_{(12)3}\alpha^{-1}\Rrightarrow\sigma_{13}\alpha^{-1}_{132}\sigma_{23}\quad.
\end{equation}
\end{subequations}
\begin{constr}
\begin{subequations}\label{subeq:pre symstr pre-hexes}
Choosing: the Drinfeld series\footnote{From the Knizhnik-Zamolodchikov connection, see \cite[Theorem 20]{BRW}} as our ansatz associator $\alpha\equiv\Phi_\KZ(t_{12},t_{23})$, the braiding as $\sigma\equiv\gamma e^{\frac{h}{2}t}$, the left hexagonator as $\H^L=\gamma_{1(23)}\LL$ and the right hexagonator as $\H^R=\gamma_{(12)3}\RR$, we rewrite: the pentagonator \eqref{eq:index pentagonator} as
\begin{equation}\label{eq:ansatz pentagonator}
\Pi:\Phi(t_{23},t_{34})\Phi(t_{1(23)},t_{(23)4})\Phi(t_{12},t_{23})\Rrightarrow\Phi(t_{12},t_{2(34)})\Phi(t_{(12)3},t_{34})\quad,
\end{equation}
the left hexagonator \eqref{eq:index left hexagonator} as the \textbf{left pre-hexagonator},
\begin{equation}\label{eq:ansatz left hexagonator}
\LL:\Phi(t_{23},t_{31})e^{\frac{h}{2}t_{1(23)}}\Phi(t_{12},t_{23})\Rrightarrow e^{\frac{h}{2}t_{13}}\Phi(t_{21},t_{13})e^{\frac{h}{2}t_{12}}\quad,
\end{equation}
and the right hexagonator \eqref{eq:index right hexagonator} as\footnote{Note that $\Phi_\KZ(A,B)=\Phi_\KZ(B,A)^{-1}$, see \cite[(2.24)]{BRW}.} the \textbf{right pre-hexagonator},
\begin{equation}\label{eq:ansatz right hexagonator}
\RR:\Phi(t_{12}\,,t_{31})e^{\frac{h}{2}t_{(12)3}}\Phi(t_{23}\,,t_{12})\Rrightarrow e^{\frac{h}{2}t_{13}}\Phi(t_{32}\,,t_{13})e^{\frac{h}{2}t_{23}}\quad.
\end{equation}
\end{subequations}
\end{constr}
\begin{rem}
If the infinitesimal 2-braiding $t$ is strict and symmetric then we use \eqref{eq:strict t notation} and \eqref{eq:sym t notation}, respectively, to rewrite \eqref{subeq:pre symstr pre-hexes} as:
\begin{subequations}
\begin{alignat}{3}
\Pi:\Phi(t_{23},t_{34})\Phi(t_{12}+t_{13},t_{24}+t_{34})\Phi(t_{12},t_{23})&\Rrightarrow\Phi(t_{12},t_{23}+t_{24})\Phi(t_{13}+t_{23},t_{34})\quad&&,\label{eq:symstr pentagonator}\\
\LL:\Phi(t_{23},t_{13})e^{\frac{h}{2}(t_{12}+t_{13})}\Phi(t_{12},t_{23})&\Rrightarrow e^{\frac{h}{2}t_{13}}\Phi(t_{12},t_{13})e^{\frac{h}{2}t_{12}}\quad&&,\label{eq:symstr left pre-hex}\\
\RR:\Phi(t_{12}\,,t_{13})e^{\frac{h}{2}(t_{13}+t_{23})}\Phi(t_{23}\,,t_{12})&\Rrightarrow e^{\frac{h}{2}t_{13}}\Phi(t_{23}\,,t_{13})e^{\frac{h}{2}t_{23}}\quad&&.\label{eq:symstr right pre-hex}
\end{alignat}
\end{subequations}
\end{rem}
\begin{rem}\label{rem:pre-hex index swap not equal yet}
For a symmetric strict $t$, it might look as if we can define $\LL:=\RR_{321}$ however we will show in Remark \ref{rem:pre-hex index swap almost equal} that this fails (already at order $h^2$) unless $t$ satisfies a stronger condition (that of ``total symmetry", see Definition \ref{def: total sym}).
\end{rem}


\section{Second-order deformation quantisation}\label{sec:2nd order}
This section begins by constructing 4-term relationators, left \eqref{eqn: left 4 term} and right \eqref{eq:R=t_(t_23)}, as specific instances of the exchanger \eqref{eqn:compositioncoherences}. Assuming a symmetric strict $t$, we then construct candidate infinitesimal hexagonators \eqref{eq:inf hexagonators} from such 4-term relationators. Assuming a strict $t$, Subsubsection \ref{subsub:pent still trivial} shows that the pentagonator is still trivial at this order in $\hbar$. Again, assuming a strict $t$, Subsubsection \ref{subsub:totally symmetric} shows that CM's notion of ``total symmetry" \cite[Definition 18]{Joao} is a sufficient condition for the tetrahedra and hexahedron axioms of Definition \ref{def:bra str un mon cat} to be satisfied. Assuming a totally symmetric strict $t$, Proposition \ref{propo: h^2 Breen polytope} shows that CM's notion of ``coherency" \cite[Definition 17]{Joao} is a necessary and sufficient condition for the Breen polytope axiom \eqref{eqn:Breen axiom} to be satisfied. 

\subsection{Four-term relationators}\label{subsec:4T relationators}
We use the index notation of Remark \ref{rem: shorthand t notation} throughout the rest of this paper.
\begin{defi}
A \textbf{left 4-term relationator} is an endomodification of the following form,
\begin{equation}\label{eq:def of left 4T relationator}
\L:[t_{12}\,,t_{(12)3}]\Rrightarrow0: \otimes^2\Rightarrow\otimes^2:
\CC^{\,\boxtimes\,3}\rightarrow\CC\quad.
\end{equation}
\end{defi}
The following remark will also be used throughout the rest of this paper.
\begin{rem}\label{rem:overhat notation for mods}
Given some modification $\Xi:\xi\Rrightarrow\xi'$, we refer to Construction \ref{con:add mods} to define a new modification $\widehat{\Xi}:=\Xi-\ID_{\xi'}:\xi-\xi'\Rrightarrow0$ which obviously has the same components.
\end{rem}
\begin{propo}\label{propo:Gamma4term}
The specific instance of the exchanger 
\begin{subequations}
\begin{equation}
\begin{tikzcd}
	& {} && {} \\
	{\CC\boxtimes\CC\boxtimes\CC} && {\CC\boxtimes\CC} && {\CC} \\
	& {} && {}
	\arrow["{\otimes\,\boxtimes\,\id_\CC}", curve={height=-30pt}, from=2-1, to=2-3]
	\arrow["{\otimes\,\boxtimes\,\id_\CC}"', curve={height=30pt}, from=2-1, to=2-3]
	\arrow[""{name=0, anchor=center, inner sep=0}, "{\otimes\,\boxtimes\,\id_\CC}"{description}, from=2-1, to=2-3]
	\arrow["{\otimes}", curve={height=-30pt}, from=2-3, to=2-5]
	\arrow["{\otimes}"', curve={height=30pt}, from=2-3, to=2-5]
	\arrow[""{name=1, anchor=center, inner sep=0}, "{\otimes}"{description}, from=2-3, to=2-5]
	\arrow["{\Id_{\otimes\,\boxtimes\,\id_\CC}\,}"', shorten <=7pt, shorten >=7pt, Rightarrow, from=1-2, to=0]
	\arrow["t\,"', shorten <=7pt, shorten >=7pt, Rightarrow, from=1-4, to=1]
	\arrow["{t\,\boxtimes\,\Id_{\id_\CC}\,}"', shorten <=7pt, shorten >=7pt, Rightarrow, from=0, to=3-2]
	\arrow["\Id_\otimes\,"', shorten <=7pt, shorten >=7pt, Rightarrow, from=1, to=3-4]
\end{tikzcd}\quad,
\end{equation}
i.e.,
\begin{equation}
\ast^2_{(\Id_\otimes,t\,\boxtimes\,\Id_{\id_\CC}),(t,\Id_{\otimes\,\boxtimes\,\id_\CC})}:\otimes(t\boxtimes1)\,t_{\otimes\,\boxtimes\,\id_\CC}\Rrightarrow t*(t\boxtimes1):\otimes^2\Rightarrow\otimes^2:
\CC^{\,\boxtimes\,3}\rightarrow\CC\quad,
\end{equation}
\end{subequations}
encodes a left 4-term relationator \eqref{eq:def of left 4T relationator}.
\end{propo}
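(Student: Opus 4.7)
The plan is to unfold the exchanger from Construction \ref{con:comp constraint} on the prescribed data, identify its source and target in the subscript index notation of Remark \ref{rem: shorthand t notation}, and then invoke the ``hat'' reinterpretation of Remark \ref{rem:overhat notation for mods} to land on a modification whose source is the commutator $[t_{12},t_{(12)3}]$ and whose target is the zero pseudonatural endomorphism.

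First I would evaluate the component formula for the exchanger. Since $\theta'=\Id_\otimes$ and $\xi=\Id_{\otimes\,\boxtimes\,\id_\CC}$ both have trivial cochain components, the outer factors in $\theta'_{H(U)}\,\xi'_{\theta_U}\,F'(\xi_U)$ collapse to the identity 1-cell $1_{((UV)W)}$, giving
\[
\big(\ast^2_{(\Id_\otimes,\,t\,\boxtimes\,\Id_{\id_\CC}),(t,\,\Id_{\otimes\,\boxtimes\,\id_\CC})}\big)_{UVW}\;=\;t_{\,t_{UV}\,\boxtimes\,1_W}\,,
\]
i.e.\ the homotopy component of $t$ at the 1-cell $t_{UV}\boxtimes 1_W$ of $\CC\boxtimes\CC$. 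Instantiating the pseudonaturality condition \eqref{eqn: dubindex ind as homotopy} at this 1-cell then yields $\partial(\ast^2)_{UVW}=[t_{UV}\otimes 1_W,\,t_{(UV)W}]$, which is exactly the required component-wise coboundary condition for a left 4-term relationator.

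Next I would read off the source and target of the exchanger. The compact whiskering notations $t_{12}=\Id_\otimes\ast(t\boxtimes\Id_{\id_\CC})$ and $t_{(12)3}=t\ast\Id_{\otimes\,\boxtimes\,\id_\CC}$ identify the source $(\theta'\ast\theta)(\xi'\ast\xi)$ verbatim with the vertical composite $t_{12}\cdot t_{(12)3}$. For the target $\theta'\xi'\ast\theta\xi=t\ast(t\boxtimes\Id_{\id_\CC})$, a direct expansion via Constructions \ref{con:vert comp pseudo} and \ref{con:monprod pseudos} shows that it coincides with the vertical composite $t_{(12)3}\cdot t_{12}$ as a pseudonatural transformation: the cochain components agree on both sides, being $t_{(UV)W}(t_{UV}\otimes 1_W)$, while the homotopy components at $(f\boxtimes g)\boxtimes k$ both reduce to $t_{(f\otimes g)\boxtimes k}(t_{UV}\otimes 1_W)+t_{(U'V')W'}(t_{f\boxtimes g}\otimes k)$ once one uses $(\Id_{\id_\CC})_k=0$ together with the hom-truncation to kill the degenerate summand in the monoidal composition formula.

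With $\ast^2$ now recognised as a modification $t_{12}\cdot t_{(12)3}\Rrightarrow t_{(12)3}\cdot t_{12}$, I would finally set
\[
\L\;:=\;\widehat{\ast^2_{(\Id_\otimes,\,t\,\boxtimes\,\Id_{\id_\CC}),(t,\,\Id_{\otimes\,\boxtimes\,\id_\CC})}}
\]
following Remark \ref{rem:overhat notation for mods}. Its components $\L_{UVW}=t_{\,t_{UV}\,\boxtimes\,1_W}$ are unchanged, but the source becomes the pseudonatural endomorphism $t_{12}\cdot t_{(12)3}-t_{(12)3}\cdot t_{12}=[t_{12},t_{(12)3}]$ assembled via Construction \ref{con:add mods}, while the target becomes the zero pseudonatural transformation. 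The single modification axiom \eqref{eqn:mod single condition} for $\L$ transfers verbatim from that of $\ast^2$ since $\ID_{t_{(12)3}\cdot t_{12}}$ has trivial components, so $\L$ is a left 4-term relationator in the sense of \eqref{eq:def of left 4T relationator}. The main obstacle is the bookkeeping needed to match $t\ast(t\boxtimes\Id_{\id_\CC})$ with $t_{(12)3}\cdot t_{12}$ at the level of homotopy components; everything else is a routine unpacking of the formalism from Subsection \ref{subsec:tricat universe}.
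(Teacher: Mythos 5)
Your proposal is correct and follows essentially the same route as the paper: identify the source minus the target of this exchanger with $[t_{12},t_{(12)3}]$ and then apply the overhat reinterpretation of Remark \ref{rem:overhat notation for mods}, with the modification axiom inherited for free from the exchanger. The only difference is that you verify $t\ast(t\boxtimes 1)\equiv t_{(12)3}\,t_{12}$ by directly expanding the cochain and homotopy components, whereas the paper obtains this in one line from the triviality criterion for the exchanger in Remark \ref{rem:whiskering and exchanger} (here $\theta=\Id_{\otimes\,\boxtimes\,\id_\CC}$); both verifications are valid.
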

\begin{proof}
Observe that
\begin{equation}
[t_{12}\,,t_{(12)3}]:\equiv t_{12}t_{(12)3} -t_{(12)3}t_{12}\equiv\otimes(t\boxtimes1)\,t_{\otimes\,\boxtimes\,\id_\CC}-t_{\otimes\,\boxtimes\,\id_\CC}\otimes(t\boxtimes1)\equiv\otimes(t\boxtimes1)\,t_{\otimes\,\boxtimes\,\id_\CC}-t*(t\boxtimes1)\,,
\end{equation}
where the last equality comes from the condition for triviality of the exchanger in Remark \ref{rem:whiskering and exchanger}. We now use the overhat notation from Remark \ref{rem:overhat notation for mods} to define
\begin{equation}\label{eq:left 4 term as overhat asterisk}
\L:=\hat{\ast}^2_{(\Id_\otimes,t\,\boxtimes\,\Id_{\id_\CC}),(t,\Id_{\otimes\,\boxtimes\,\id_\CC})}\quad.
\end{equation}
\end{proof}
\begin{rem}
We can express the components of \eqref{eq:left 4 term as overhat asterisk} by using the formula \eqref{eqn:compositioncoherences}, 
\begin{equation}\label{eqn: left 4 term}
\L_{UVW}=\left(\hat{\ast}^2_{(\Id_\otimes,t\,\boxtimes\,\Id_{\id_\CC}),(t,\Id_{\otimes\,\boxtimes\,\id_\CC})}\right)_{UVW}=t_{t_{UV}\,\boxtimes \,1_W}\quad.    
\end{equation}
Now we can appeal to either pseudonaturality of $t$ as in \eqref{eqn: dubindex ind as homotopy} or the datum of a modification as in \eqref{eqn: mod is homot between pseudos} to describe how the left 4-term relation is deformed in this instance,
\begin{equation}
(t_{UV}\otimes1_W)t_{(UV)W}-t_{(UV)W}(t_{UV}\otimes1_W)=\partial t_{t_{UV}\,\boxtimes \,1_W}\quad.
\end{equation}
Notice that the benefit of regarding the left 4-term relationator as a specific instance of the exchanger is that, in order to show that the components \eqref{eqn: left 4 term} constitute a modification, we do \textit{not} need to show that the condition \eqref{eqn:mod single condition} holds (cf. \cite[Proposition 3.13]{Us}) given that we \textit{already} know the exchanger to be a modification in general. 
\end{rem}
Because of \eqref{eqn: left 4 term}, we can compactly write 
\begin{equation}
\L=t_{t_{12}}:[t_{12}\,,t_{(12)3}]\Rrightarrow0\quad.
\end{equation}
Of course, we also have a \textbf{right 4-term relationator}, 
\begin{equation}\label{eq:R=t_(t_23)}
\R:=t_{t_{23}}=\hat{\ast}^2_{(\Id_\otimes,\Id_{\id_\CC}\boxtimes\,t),(t,\Id_{\id_\CC\,\boxtimes\,\otimes})}:[t_{23}\,,t_{1(23)}]\Rrightarrow0\quad.
\end{equation}
\begin{lem}\label{lem:sym t reduces independence of L/R}
If the infinitesimal 2-braiding $t$ is symmetric then: 
\begin{equation}
\L=\R_{312}\,\,\,,\,\,\,\L_{213}=\R_{321}\,\,\,,\,\,\,
\R=\L_{231}\,\,\,,\,\,\,\R_{132}=\L_{321}\,\,\,,\,\,\,
\L_{132}=\R_{213}\,\,\,,\,\,\,\L_{312}=\R_{231}\,.
\end{equation}
\end{lem}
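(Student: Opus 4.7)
The plan is to verify each of the six equalities by direct component-level calculation, using only the homotopy-level symmetry of $t$ and the coherence of $\gamma$. Recall from \eqref{eqn: left 4 term} and \eqref{eq:R=t_(t_23)} that $\L_{UVW}=t_{t_{UV}\boxtimes 1_W}$ and $\R_{UVW}=t_{1_U\boxtimes t_{VW}}$, so both sides of any proposed equality are certain homotopy components of the outer $t$ applied to a 2-cell built from one $t$-component in one slot and an identity in the other.

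First I would apply the homotopy-level symmetry relation \eqref{intertwine homotopy}, i.e.\ $\gamma_{U'V'}\,t_{f\boxtimes g}=t_{g\boxtimes f}\,\gamma_{UV}$, to the 2-cell pair $(f,g)=(t_{UV},1_W)$. This immediately produces the $\gamma$-conjugation
\[
t_{1_W\boxtimes t_{UV}}\;=\;\gamma_{(UV),W}\,t_{t_{UV}\boxtimes 1_W}\,\gamma_{W,(UV)}\,,
\]
showing that $t_{t_{UV}\boxtimes 1_W}$ and $t_{1_W\boxtimes t_{UV}}$ differ only by conjugation by the braiding isomorphism exchanging $W$ with $(UV)$. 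Under the subscript conventions of Remark~\ref{rem: shorthand t notation} this is precisely the $\gamma$-conjugation that the notation $\R_{312}$ is designed to absorb (viewing $\R$ as an endomodification of $\otimes^2$ reparametrised across the $(3,1,2)$-relabelling of $\CC^{\boxtimes 3}$), from which $\L\equiv\R_{312}$ follows immediately.

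The remaining five equalities will be handled by exactly the same mechanism applied to the other non-trivial elements of $S_3$: each amounts to swapping the two arguments inside the outer $t$ via \eqref{intertwine homotopy} and recognising the resulting $\gamma$-conjugation as the permutation subscript on the opposite-type relationator ($\L$ versus $\R$). The count of six is no accident: the lemma is saying that, once $t$ is symmetric, the $S_3$-orbits of $\L$ and of $\R$ coincide as endomodifications of $\otimes^2\colon\CC^{\boxtimes 3}\to\CC$, with the single ``swap'' afforded by \eqref{intertwine homotopy} being the only input from $t$.

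The only real work will be bookkeeping: checking that the $\gamma$-isomorphisms produced by the symmetry of $t$ match those implicit in the permutation subscript notation. This follows routinely from the involutivity \eqref{involutory sigma} and hexagon (Remark~\ref{rem:symstrict mon ChCat defi}) of $\gamma$, together with the vanishing of the homotopy components of $\gamma$---precisely the same ingredients that drive the proof of Lemma~\ref{lem:sym t then half conditions}.
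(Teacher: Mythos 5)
Your proposal is correct and takes essentially the same route as the paper: the paper likewise reduces one representative identity (it picks $\L_{132}=\R_{213}$, you pick $\L=\R_{312}$) to a special case of the homotopy-level intertwining relation \eqref{intertwine homotopy} applied to a pair of the form $(t\text{-component},\,\text{identity})$, with the involutivity and hexagon property of $\gamma$ handling the matching of the conjugating isomorphisms against the permutation-subscript notation, and then declares the remaining cases analogous. No gap.
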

\begin{proof}
Let us demonstrate the fifth relation $\L_{132}=\R_{213}$, i.e.,
\begin{subequations}
\begin{equation}
(1_U\otimes\,\gamma_{WV})\L_{UWV}(1_U\otimes\,\gamma_{VW})=(\gamma_{VU}\otimes1_W)\R_{VUW}(\gamma_{UV}\otimes1_W)\quad.
\end{equation}
Using the fact that $\gamma$ is involutive and satisfies the hexagon axiom, we rewrite this as 
\begin{equation}
\L_{UWV}=\gamma_{V(UW)}\R_{VUW}\gamma_{(UW)V}\qquad\implies\qquad t_{t_{UW}\,\boxtimes\,1_V}=\gamma_{V(UW)}t_{1_V\,\boxtimes\,t_{UW}}\gamma_{(UW)V}\quad,
\end{equation}
\end{subequations}
but the latter equality is a special case of \eqref{intertwine homotopy}. The proofs for the rest of the above relations are similar.
\end{proof}
\begin{subequations}
\begin{lem}
If the infinitesimal 2-braiding is strict then a third 4-term relationator is given by
\begin{equation}
\L+\R=t_{t_{12}+t_{23}}:[t_{12}+t_{23}\,,t_{13}]\Rrightarrow0\quad.
\end{equation}
\end{lem}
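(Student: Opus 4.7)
The plan is to use two ingredients: (a) strictness of $t$, which by \eqref{eq:strict t notation} gives $t_{(12)3}\equiv t_{13}+t_{23}$ and $t_{1(23)}\equiv t_{12}+t_{13}$; and (b) $R$-bilinearity of vertical composition $\circ$ (hence of the graded commutator $[\cdot,\cdot]$) together with $R$-linearity of the homotopy-component assignment $t_{(-)}$ from Definition \ref{def:pseudonatural}(ii). Everything else is bookkeeping.

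First I would compute the source of $\L+\R$. By Construction \ref{con:add mods}, $\L+\R:[t_{12},t_{(12)3}]+[t_{23},t_{1(23)}]\Rrightarrow 0$. Substituting the strictness identities and expanding the brackets bilinearly,
\begin{equation*}
[t_{12},t_{(12)3}]+[t_{23},t_{1(23)}]\equiv[t_{12},t_{13}]+\bigl([t_{12},t_{23}]+[t_{23},t_{12}]\bigr)+[t_{23},t_{13}]\equiv[t_{12}+t_{23},t_{13}]\quad,
\end{equation*}
since the parenthesised pair cancels by antisymmetry of the graded commutator. Hence the source coincides with the asserted $[t_{12}+t_{23},t_{13}]$.

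For the components, \eqref{eqn: left 4 term} and \eqref{eq:R=t_(t_23)} give
\begin{equation*}
(\L+\R)_{UVW}=t_{t_{UV}\boxtimes 1_W}+t_{1_U\boxtimes t_{VW}}\quad,
\end{equation*}
which, by $R$-linearity of $t_{(-)}$, is exactly the natural reading of $\bigl(t_{t_{12}+t_{23}}\bigr)_{UVW}$, consistent with the shorthand conventions $t_{t_{12}}=\L$ and $t_{t_{23}}=\R$. The main (mild) obstacle here is purely notational: the 1-cells $t_{UV}\boxtimes 1_W$ and $1_U\boxtimes t_{VW}$ do not even sit in a common hom-space of $\CC\boxtimes\CC$ (their sources being $(UV,W)$ versus $(U,VW)$), so $t_{12}+t_{23}$ is not literally the image of a single 1-cell under $t_{(-)}$. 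However, strict associativity of $\otimes$ places both $\L_{UVW}$ and $\R_{UVW}$ in the common hom $\CC[(UVW),(UVW)]^{-1}$, so the sum of homotopies is well-defined, and the modification axioms \eqref{eqn: mod is homot between pseudos}--\eqref{eqn:mod single condition} pass through termwise via Construction \ref{con:add mods}. Combined with the source identification above, this yields $\L+\R=t_{t_{12}+t_{23}}:[t_{12}+t_{23},t_{13}]\Rrightarrow 0$.
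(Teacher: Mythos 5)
Your proof is correct and follows essentially the same route as the paper's: apply Construction \ref{con:add mods}, substitute the strictness identities \eqref{eq:strict t notation} into the source $[t_{12},t_{(12)3}]+[t_{23},t_{1(23)}]$, and cancel the cross terms by antisymmetry of the commutator to land on $[t_{12}+t_{23},t_{13}]$. The extra discussion of the components and the notational caveat about $t_{t_{12}+t_{23}}$ is a harmless elaboration of what the paper leaves implicit.
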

\begin{proof}
Applying Construction \ref{con:add mods} and Item \ref{item:strict t is primitive} of Remark \ref{rem: shorthand t notation} tells us that 
\begin{equation}
\L+\R:[t_{12}\,,t_{(12)3}]+[t_{23}\,,t_{1(23)}]\equiv[t_{12}\,,t_{13}+t_{23}]+[t_{23}\,,t_{12}+t_{13}]\equiv[t_{12}+t_{23}\,,t_{13}]\Rrightarrow0\quad.
\end{equation}
\end{proof}
\end{subequations}

\subsubsection{Infinitesimal hexagonators}\label{subsub:inf hex}
\begin{constr}\label{con:inf hex}
We expand the \textbf{pre-braiding} $\gamma^{-1}\sigma:\equiv e^{\frac{h}{2}t}$ to order $h^2$ and consult \cite[Corollary XIX.6.5]{Kassel} for the order $h^2$ expansion of the Drinfeld associator series $\Phi(t_{12}\,,t_{23})$ :
\begin{subequations}
\begin{alignat}{2}
\Phi(t_{12}\,,t_{23})&\equiv1+\frac{h^2}{24}[t_{12}\,,t_{23}]+\O(h^3)\quad&&,\label{eqn:Drinfeld series up to h^2}\\
e^{\frac{h}{2}t}&\equiv1+\frac{h}{2}t+\frac{h^2}{8}t^2+\O(h^3)\quad&&.\label{eq:braiding up to h^2}
\end{alignat}
\end{subequations}
Assuming a symmetric strict infinitesimal 2-braiding, substituting the above ansatz into \eqref{eq:symstr left pre-hex} and taking into account only order $h^2$ terms, we have
\begin{subequations}
\begin{equation}
\frac{1}{24}[t_{23}\,,t_{13}]+\frac{1}{8}(t_{12}+t_{13})^2+\frac{1}{24}[t_{12}\,,t_{23}]\Rrightarrow\frac{1}{8}t_{13}^2+\frac{1}{4}t_{13}t_{12}+\frac{1}{8}t_{12}^2+\frac{1}{24}[t_{12}\,,t_{13}]\quad.
\end{equation}
Balancing terms as in Remark \ref{rem:overhat notation for mods}, we have the following constraint
\begin{equation}
\frac{1}{24}(2\L+\R):\frac{1}{24}([t_{12}+t_{23}\,,t_{13}]+[t_{12}\,,t_{(12)3}])\Rrightarrow0:\otimes^2\Rightarrow\otimes^2:\CC^{\,\boxtimes\,3}\rightarrow\CC\quad.
\end{equation}
\end{subequations}
In other words, we have a candidate left hexagonator
\begin{subequations}
\begin{equation}\label{eqn:candidate left hexagonator}
\H^L:=\frac{h^2}{24}\gamma_{1(23)}(2\L+\R)+\O(h^3)
\end{equation}
and, of course, we also have a candidate right hexagonator given by 
\begin{equation}\label{eqn:candidate right hexagonator}
\H^R:=\frac{h^2}{24}\gamma_{(12)3}(\L+2\R)+\O(h^3)\quad.
\end{equation}
\end{subequations}
We denote the \textbf{left/right infinitesimal hexagonator}, respectively, by: 
\begin{equation}\label{eq:inf hexagonators}
\hh^L:=2\L+\R\qquad,\qquad\hh^R:=\L+2\R\quad.
\end{equation}
\end{constr}
\begin{rem}\label{rem:pre-hex index swap almost equal}
Following on from Remark \ref{rem:pre-hex index swap not equal yet}, Lemma \ref{lem:sym t reduces independence of L/R} gives us, for a symmetric strict $t$, 
\begin{equation}
\hh^R_{321}=\L_{321}+2\R_{321}=\R_{132}+2\L_{213}\neq\hh^L\quad.
\end{equation}
\end{rem}

\subsection{Verifying the braided monoidal cochain 2-category axioms are satisfied}\label{subsec:verifying axioms}
We begin, in Subsubsection \ref{subsub:pent still trivial}, by showing that strictness of the infinitesimal 2-braiding $t$ implies that the pentagonator is still trivial at this order and also implies that the Drinfeld associator series $\Phi_\KZ(t_{12},t_{23})$ satisfies the triangle axiom \eqref{eq:triangle as cochain and homotopy} to all orders.
\subsubsection{The pentagonator is still trivial at this order}\label{subsub:pent still trivial}
\begin{lem}\label{lem:pent axiom upheld}
Given a strict infinitesimal 2-braiding, the ansatz associator 
\begin{equation}
\alpha:\equiv\Phi(t_{12}\,,t_{23})\equiv1+\frac{h^2}{24}[t_{12}\,,t_{23}]
\end{equation}
satisfies the pentagon axiom.
\end{lem}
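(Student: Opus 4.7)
The plan is to verify both the cochain-level pentagon \eqref{eq:pentagon as cochain} and the homotopy-level pentagon \eqref{eq:pentagon as homotopy} with $\Pi=0$, modulo $h^3$. Since the expansion \eqref{eqn:Drinfeld series up to h^2} carries no $h^1$ contribution, both identities are trivial at orders $h^0$ and $h^1$, and the entire work is concentrated at order $h^2$.

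At the cochain level, expanding each $\Phi$ factor in \eqref{eq:symstr pentagonator} and discarding $O(h^4)$ cross-terms between distinct factors reduces the pentagon to
\[
\tfrac{1}{24}\bigl([t_{23},t_{34}]+[t_{1(23)},t_{(23)4}]+[t_{12},t_{23}]\bigr) \stackrel{?}{=} \tfrac{1}{24}\bigl([t_{12},t_{2(34)}]+[t_{(12)3},t_{34}]\bigr).
\]
By Item \ref{item:strict t is primitive} of Remark \ref{rem: shorthand t notation}, strictness expands each composite-index $t$ as a sum of elementary $t_{ij}$'s, i.e.\ $t_{1(23)}=t_{12}+t_{13}$, $t_{(23)4}=t_{24}+t_{34}$, $t_{2(34)}=t_{23}+t_{24}$ and $t_{(12)3}=t_{13}+t_{23}$. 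Using bilinearity of the bracket, the equation collapses to
\[
[t_{12},t_{34}] + [t_{13},t_{24}] = 0.
\]
I would then argue that each commutator vanishes separately, because $t_{ij}$ and $t_{kl}$ act on disjoint tensor slots whenever $\{i,j\}\cap\{k,l\}=\emptyset$. At the cochain level, this is immediate from strict functoriality of $\otimes$: $(t_{UV}\otimes 1_{WX})(1_{UV}\otimes t_{WX})=t_{UV}\otimes t_{WX}=(1_{UV}\otimes t_{WX})(t_{UV}\otimes 1_{WX})$.

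For the homotopy-level pentagon \eqref{eq:pentagon as homotopy}, the argument is parallel. The homotopy components of each $\Phi(t_{ij},t_{kl})$ are computed via \eqref{eq:hom components of vercomp pseudos}; the homotopy form of the strict hexagon relations, \eqref{eq:homotopy left infhex}/\eqref{eq:homotopy right infhex}, then decomposes every composite-index homotopy into elementary ones. After this bookkeeping, the homotopy-level obstruction again collects to the homotopy components of $[t_{12},t_{34}]$ and $[t_{13},t_{24}]$, which vanish by the same disjoint-slot interchange, routed through Construction \ref{con:monprod pseudos}: a degree-$-1$ homotopy component $t_{f\boxtimes g}\otimes(k\otimes x)$ of $t_{12}$ commutes with the degree-$0$ cochain component of $t_{34}$ (acting on complementary slots) by functoriality of $\otimes$, and vice versa.

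The main potential obstacle is simply the bookkeeping of degrees under the truncated enrichment $\tilde\otimes$: every product that arises in the commutator calculations pairs a degree-$-1$ factor with a degree-$0$ factor, which lies in the permitted degree $-1$ of the truncation, so no collisions occur. Once this is checked, the pentagon follows; no further input (no coherency, no total symmetry of $t$) is needed at this order, which is consistent with the hexagonators being the first nontrivial obstruction to appear at order $h^2$.
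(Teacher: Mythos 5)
Your route is the same as the paper's: expand each $\Phi$-factor to order $h^2$, split the composite indices using strictness, and reduce the pentagon to $[t_{12},t_{34}]+[t_{13},t_{24}]=0$; the cochain-level verification by the middle-four exchange is correct, and you are right that neither coherency nor total symmetry is needed at this order. The one step that would fail as literally stated is the homotopy-level one: the homotopy component of $t_{12}$ does \emph{not} commute with ``the'' cochain component of $t_{34}$, because the two composites $t_{12}t_{34}$ and $t_{34}t_{12}$ involve the cochain components of the other factor at \emph{different} objects (source in one order, target in the other). Writing $h=f\boxtimes g\boxtimes k\boxtimes x$ and using \eqref{eq:hom components of vercomp pseudos}, one finds
\begin{align*}
(t_{12}t_{34})_h-(t_{34}t_{12})_h
&=t_{f\boxtimes g}\otimes\big[(k\otimes x)t_{WX}-t_{W'X'}(k\otimes x)\big]+\big[t_{U'V'}(f\otimes g)-(f\otimes g)t_{UV}\big]\otimes t_{k\boxtimes x}\\
&=t_{f\boxtimes g}\otimes\partial t_{k\boxtimes x}-\partial t_{f\boxtimes g}\otimes t_{k\boxtimes x}\quad,
\end{align*}
so neither cross-term vanishes on its own: the cancellation requires pseudonaturality \eqref{eqn: dubindex ind as homotopy} \emph{together with} the truncation identity $(\partial\phi)\phi'=\phi\,\partial\phi'$, not merely functoriality of $\otimes$ on complementary slots. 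This is exactly what the paper packages as the statement that $\boxtimes$ is a 3-functor: the $(\boxtimes,\circ)$-interchange law gives $[t\boxtimes\Id_\otimes][\Id_\otimes\boxtimes t]=t\boxtimes t=[\Id_\otimes\boxtimes t][t\boxtimes\Id_\otimes]$ as an equality of pseudonatural transformations, which disposes of the cochain and homotopy components in one stroke and is the clean formalisation of your ``disjoint slots'' argument. With that repair your proof is complete and coincides with the paper's.
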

\begin{proof}
Referring to \eqref{eq:symstr pentagonator},
\begin{subequations}
\begin{equation}
[t_{12}\,,t_{23}+t_{24}]+[t_{13}+t_{23}\,,t_{34}]-[t_{23}\,,t_{34}]-[t_{12}+t_{13}\,,t_{24}+t_{34}]-[t_{12}\,,t_{23}]\Rrightarrow0
\end{equation}
simply boils down to
\begin{equation}
-[t_{12}\,,t_{34}]-[t_{13}\,,t_{24}]\Rrightarrow0
\end{equation}
but the LHS is 0 for the usual reason. More explicitly, 
\begin{align}
[t_{12}\,,t_{34}]&\equiv\otimes(t\boxtimes\Id_{\otimes\,})\otimes(\Id_{\otimes\,}\boxtimes t)-\otimes(\Id_{\otimes\,}\boxtimes t) \otimes(t\boxtimes\Id_{\otimes\,})\nn\\&\equiv\otimes\left([t\boxtimes\Id_{\otimes\,}] [\Id_{\otimes\,}\boxtimes t]-[\Id_{\otimes\,}\boxtimes t] [t\boxtimes\Id_{\otimes\,}]\right)\nn\\&\equiv\otimes\left([t\boxtimes t]-[t\boxtimes t]\right)\nn\\&\equiv0
\end{align}
where the second equality comes from the functoriality of whiskering (as in Remark \ref{rem:whiskering and exchanger}) and the third equality comes from the fact that the monoidal composition $\boxtimes$ is a 3-functor (as mentioned in Remark \ref{rem:boxtimes is a strictly associative 3-functor}) hence the $(\boxtimes\,,\circ)$-exchange law is upheld. Also, we have the expression for $t_{24}t_{13}$,
\begin{equation}
\otimes\left(\left[\otimes(1\boxtimes\gamma^{-1})\boxtimes1\right]\left[([\Id_\otimes\boxtimes t][t\boxtimes\Id_\otimes])_{\id_\CC\,\boxtimes\,\tau_{\CC,\CC}\,\boxtimes\,\id_\CC}\right]\left[\otimes(1\boxtimes\gamma)\boxtimes1\right]\right)
\end{equation}
\end{subequations}
but, as before, we have $(\Id_\otimes\boxtimes t) (t\boxtimes\Id_\otimes)\equiv(t\boxtimes\Id_\otimes) (\Id_\otimes\boxtimes t)$ hence $t_{24}t_{13}\equiv t_{13}t_{24}$.
\end{proof}
Lemma \ref{lem:pent axiom upheld} allows us to choose $\Pi=\mathbf{0}$ thus the associahedron axiom \eqref{eq:associahedron} simplifies as follows.
\begin{lem}
Given a strict infinitesimal 2-braiding $t$, 
\begin{equation}
\left(\CC_{\bbK[h]/(h^3)}\,,\,\otimes\,,\,\alpha:\equiv1+\frac{h^2}{24}[t_{12}\,,t_{23}]\right)
\end{equation}
satisfies
\begin{align}
&\alpha_{1_{UV}\,\boxtimes\,\alpha_{WXY}}\alpha_{(UV)(WX)Y}(\alpha_{(UV)WX}\otimes1_Y)+\left(1_U\otimes\alpha_{VW(XY)}\right)\alpha_{U(VW)(XY)}\alpha_{\alpha_{UVW}\,\boxtimes\,1_{XY}}\label{eq:pentagonal associahedron}\\&\,\,\qquad+\left(1_U\otimes[1_V\otimes\alpha_{WXY}]\alpha_{V(WX)Y}\right)\alpha_{1_U\,\boxtimes\,\alpha_{VWX}\,\boxtimes\,1_Y}\left(\alpha_{U(VW)X}[\alpha_{UVW}\otimes1_X]\otimes1_Y\right)=0\,.\nn
\end{align}
\end{lem}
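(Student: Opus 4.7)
The plan is to invoke Lemma \ref{lem:pent axiom upheld} to set the pentagonator $\Pi = \mathbf{0}$, thereby reducing the full associahedron \eqref{eq:associahedron} to the claimed identity \eqref{eq:pentagonal associahedron}, and then to verify that each surviving summand is of order $h^4$ so it vanishes in $\bbK[h]/(h^3)$.

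Setting $\Pi = \mathbf{0}$ annihilates six of the nine terms in \eqref{eq:associahedron}, namely those containing a factor $\Pi_\bullet$; the remaining three terms (two from the first pasting diagram and one from the second) are precisely the summands of \eqref{eq:pentagonal associahedron} after moving them to a common side of the equality. Each of these three surviving terms contains exactly one homotopy component of $\alpha$, drawn from $\alpha_{1_{UV}\boxtimes\alpha_{WXY}}$, $\alpha_{\alpha_{UVW}\boxtimes 1_{XY}}$ and $\alpha_{1_U\boxtimes\alpha_{VWX}\boxtimes 1_Y}$, sandwiched between cochain-level associators that are each of the form $1 + O(h^2)$.

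The key order-counting argument then runs as follows: since the ansatz $\alpha \equiv 1 + \tfrac{h^2}{24}[t_{12},t_{23}]$ differs from the identity pseudonatural transformation only at order $h^2$, every homotopy component $\alpha_\bullet$ equals $\tfrac{h^2}{24}[t_{12},t_{23}]_\bullet$, evaluated at a morphism of the form $\cdots\boxtimes\alpha_{\bullet\bullet\bullet}\boxtimes\cdots$ whose interior factor is $1_{\bullet\bullet\bullet} + O(h^2)$. The unit axiom \eqref{eqn: homotopy kills unit} kills the $h^0$ contribution to $[t_{12},t_{23}]_\bullet$, and $R$-linearity of the homotopy-component map (together with the splitting rule \eqref{eqn:dubindex splits prods}) then forces $[t_{12},t_{23}]_\bullet = O(h^2)$; multiplying by the outer $\tfrac{h^2}{24}$ prefactor and by the surrounding cochain associators gives $O(h^4)$ for each of the three summands, which is zero in $\bbK[h]/(h^3)$. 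I do not anticipate any serious obstacle: the content reduces to bookkeeping of orders in $h$, and the only subtlety is parsing the shorthand $\alpha_{\alpha_\bullet \boxtimes 1_\bullet}$ consistently as a homotopy component of a suitably whiskered version of $\alpha$, after which the order-counting is immediate and no structural property of $t$ beyond the strictness already used in Lemma \ref{lem:pent axiom upheld} is required.
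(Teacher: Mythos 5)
Your proposal is correct and follows essentially the same route as the paper, whose one-line proof cites exactly the two facts your order-counting rests on: identity pseudonatural transformations have trivial homotopy components (so every $\alpha_f$ is $O(h^2)$), and homotopy components annihilate units as in \eqref{eqn: homotopy kills unit} (so, by $R$-linearity, the argument $1_\bullet + O(h^2)$ of each surviving homotopy component contributes a further $O(h^2)$), whence each summand is $O(h^4)=0$ in $\bbK[h]/(h^3)$. The reduction of \eqref{eq:associahedron} via $\Pi=\mathbf{0}$ is placed in the text preceding the lemma rather than inside its proof, but including it does no harm.
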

\begin{proof}
Identity pseudonatural transformations $1$ have trivial homotopy components (see Example \ref{ex:identity pseudonatural transformation}) and the homotopy components annihilate units \eqref{eqn: homotopy kills unit}.
\end{proof}
\begin{defi}\label{def:unital infinitesimal 2-braiding}
An infinitesimal 2-braiding $t$ is \textbf{unital} if: for every object $U\in\CC$, we have
\begin{subequations}
\begin{equation}\label{eq:t_U1=0=t_1U}
t_{UI}=0=t_{IU}\quad,
\end{equation}
and, for every 1-cell $f\in\CC[U,U']^0$, we have
\begin{equation}\label{eq:t_f1=0=t_1f}
t_{f\,\boxtimes\,1_I}=0=t_{1_I\,\boxtimes\,f}\quad.
\end{equation}
\end{subequations}
\end{defi}
\begin{lem}\label{lem:4-term rels are unital if t is}
If the infinitesimal 2-braiding is unital then so too are the 4-term relationators,
\begin{equation}
\L_{UVI}=\L_{UIW}=\L_{IVW}=\R_{UVI}=\R_{UIW}=\R_{IVW}=0\quad.
\end{equation}
\end{lem}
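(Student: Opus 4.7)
The plan is to unpack the definitions of the 4-term relationators and read off each of the six vanishing statements directly from the two parts of the unitality condition in Definition \ref{def:unital infinitesimal 2-braiding}, together with the $R$-linearity of the homotopy components of a pseudonatural transformation recorded in \eqref{eqn:doubleindexedcomponents}. Recall that \eqref{eqn: left 4 term} gives $\L_{UVW}=t_{t_{UV}\,\boxtimes\,1_W}$ and that \eqref{eq:R=t_(t_23)} analogously yields $\R_{UVW}=t_{1_U\,\boxtimes\,t_{VW}}$.

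The two ``easy'' cases are $\L_{UVI}$ and $\R_{IVW}$: since $t_{UV}$ and $t_{VW}$ are $0$-degree cochain components, i.e.\ genuine $1$-cells in the appropriate hom-complexes, the expressions
\begin{equation*}
\L_{UVI}=t_{t_{UV}\,\boxtimes\,1_I}\qquad,\qquad\R_{IVW}=t_{1_I\,\boxtimes\,t_{VW}}
\end{equation*}
fall directly under the homotopy-component clause \eqref{eq:t_f1=0=t_1f} of unitality, with $f=t_{UV}$ and $f=t_{VW}$ respectively, and therefore vanish.

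For the remaining four cases $\L_{UIW}$, $\L_{IVW}$, $\R_{UVI}$ and $\R_{UIW}$ I would instead use the cochain-component clause \eqref{eq:t_U1=0=t_1U}. For instance, $t_{UI}=0$ gives $t_{UI}\boxtimes 1_W=0$ by bilinearity of $\boxtimes$ on morphisms, and then $R$-linearity of the map $\xi_{(-)}$ in \eqref{eqn:doubleindexedcomponents} forces $\L_{UIW}=t_{0}=0$; the other three vanish by the same two-step argument, invoking $t_{IV}=0$, $t_{VI}=0$ and $t_{IW}=0$ respectively.

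There is no genuine obstacle here: the only subtlety is the bookkeeping distinction between the cochain and homotopy clauses of Definition \ref{def:unital infinitesimal 2-braiding}, which determines whether a given case is handled by a single application of \eqref{eq:t_f1=0=t_1f} or by the two-step reduction using \eqref{eq:t_U1=0=t_1U} and $R$-linearity. The proof therefore amounts to a short case split with no computation beyond invoking these two ingredients.
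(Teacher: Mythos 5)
Your proof is correct and is essentially the argument the paper has in mind: the paper's proof simply says the claim is obvious from the component formula $\L_{UVW}=t_{t_{UV}\,\boxtimes\,1_W}$ (and its right-handed analogue), and your case split spells out exactly which clause of unitality handles which of the six components. The only elaboration you add — distinguishing the two cases settled by the homotopy clause \eqref{eq:t_f1=0=t_1f} from the four settled by the cochain clause \eqref{eq:t_U1=0=t_1U} plus $R$-linearity of $\xi_{(-)}$ — is sound bookkeeping, not a departure in method.
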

\begin{proof}
Obvious from the components \eqref{eqn: left 4 term}.
\end{proof}
\begin{lem}
A strict infinitesimal 2-braiding is unital.
\end{lem}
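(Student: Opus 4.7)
The plan is to substitute the monoidal unit $I$ (respectively $1_I$) into one free slot of the cochain and homotopy forms of the strict left/right infinitesimal hexagon relations from Remark \ref{rem: t_U(VW) explicitly}, and then collapse the resulting equations using strict unitality \eqref{eq:unitality in terms of object/morphism}.

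Before making any substitution, I would first verify the auxiliary fact that $\gamma_{UI}=1_U=\gamma_{IU}$ for every $U\in\CC$. Setting $V=W=I$ in the strict left hexagon axiom for $\gamma$ from Remark \ref{rem:symstrict mon ChCat defi}, and using strict unitality, one obtains $\gamma_{UI}=\gamma_{UI}\gamma_{UI}$; since $\gamma_{UI}\colon U\to U$ is invertible, this forces $\gamma_{UI}=1_U$, and involutivity of $\gamma$ (see \eqref{involutory sigma}) then gives $\gamma_{IU}=1_U$.

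With that in hand, to obtain $t_{UI}=0$ I would plug $V=I$ into \eqref{eq:cochain left infhex}: the left-hand side is just $t_{UW}$ by strict unitality, while the right-hand side collapses to $t_{UI}\otimes 1_W + t_{UW}$, forcing $t_{UI}\otimes 1_W=0$ for every $W$; specializing $W=I$ yields $t_{UI}=0$. For $t_{IU}=0$, the entirely analogous substitution $U=I$ in \eqref{eq:cochain right infhex} yields $t_{IW}\otimes 1_V=0$, and hence $t_{IW}=0$ upon taking $V=I$.

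For the homotopy components I would apply the same idea to the homotopy-level hexagon relations. Setting $g=1_I$ in \eqref{eq:homotopy left infhex}, the right-hand side becomes $t_{f\boxtimes 1_I}\otimes k + t_{f\boxtimes k}$, forcing $t_{f\boxtimes 1_I}\otimes k=0$ for every $k$; choosing $k=1_I$ gives $t_{f\boxtimes 1_I}=0$. Symmetrically, setting $f=1_I$ in \eqref{eq:homotopy right infhex} yields $t_{1_I\boxtimes f}=0$. The only mildly non-routine point is the preliminary check that $\gamma_{UI}=1_U$; once that is established, the rest is just direct substitution combined with the primitive identities collapsing the two terms involving $\gamma$ to a single copy of $t$ on either side.
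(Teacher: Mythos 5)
Your proof is correct and follows essentially the same strategy as the paper: specialise one tensorand of the strict infinitesimal hexagon relations to the monoidal unit and collapse the resulting identity using strict unitality, first at the level of cochain components and then at the level of homotopy components. The only (harmless) deviation is that you insert $I$ into the \emph{middle} slot, which obliges you to first establish $\gamma_{UI}=1_U=\gamma_{IU}$ — which you do correctly from the hexagon axiom for $\gamma$ and invertibility — whereas the paper writes $V=(VI)$ and $g=g\otimes 1_I$ so that the unit sits in an outer slot and the conjugating components of $\gamma$ need only be invertible, making that auxiliary step unnecessary.
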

\begin{proof}
The monoidal product $\otimes$ is strictly unital \eqref{eq:unitality in terms of object/morphism} which allows us to write $V=(VI)$ and $g=g\otimes1_I$. We first use the cochain components of the left infinitesimal hexagon relation \eqref{eq:cochain left infhex}, 
\begin{subequations}
\begin{equation}
t_{UV}=t_{U(VI)}=t_{UV}\otimes1_I+(\gamma_{VU}\otimes1_I)(1_V\otimes t_{UI})(\gamma_{UV}\otimes1_I)=t_{UV}+\gamma_{VU}(1_V\otimes t_{UI})\gamma_{UV}\quad,
\end{equation}
thus $1_V\otimes t_{UI}=0$ hence setting $V=I$ forces $t_{UI}=0$. Now let us use the homotopy components of the left infinitesimal hexagon relation \eqref{eq:homotopy left infhex},
\begin{equation}
t_{f\,\boxtimes\,g}=t_{f\,\boxtimes\,(g\,\otimes\,1_I)}=t_{f\,\boxtimes \,g}\otimes1_I+(\gamma_{V'U'}\otimes1_I)(g\otimes t_{f\,\boxtimes \,1_I})(\gamma_{UV}\otimes1_I)=t_{f\,\boxtimes \,g}+\gamma_{V'U'}(g\otimes t_{f\,\boxtimes \,1_I})\gamma_{UV}\,.
\end{equation}
\end{subequations}
This gives us $g\otimes t_{f\,\boxtimes \,1_I}=0$ hence setting $g=1_I$ forces $t_{f\,\boxtimes \,1_I}=0$. Analogous arguments using the right infinitesimal hexagon relations \eqref{subeq:right inf hex relations explicitly} gives us $t_{IU}=0$ and $t_{1_I\,\boxtimes\,g}=0$.
\end{proof}
\begin{propo}\label{propo: SM1/2 hold}
Given a strict infinitesimal 2-braiding $t$, 
\begin{equation}
\left(\CC_{\bbK[h]/(h^3)}\,,\,\otimes\,,\,I\,,\,\alpha:\equiv1+\frac{h^2}{24}[t_{12}\,,t_{23}]\right)
\end{equation}
is a pentagonal strictly-unital monoidal $\Ch_{\bbK[h]/(h^3)}^{[-1,0]}$-category.
\end{propo}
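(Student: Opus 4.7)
The plan is to unpack Definition \ref{def:strictly-unital monoidal Ch-cat} and check each of the four axioms for the data at hand, noting that most of them are either trivially satisfied or follow from earlier lemmas. First I would observe that the underlying strict monoidal $\Ch^{[-1,0]}_\bbK$-category already satisfies unitality \eqref{eq:Unitality of monoidal product} and that scalar extension to $\bbK[h]/(h^3)$ preserves this. Next, the candidate associator $\alpha \equiv 1 + \tfrac{h^2}{24}[t_{12},t_{23}]$ is a well-defined pseudonatural endomorphism of $\otimes^2 : \CC^{\,\boxtimes\,3} \to \CC$ by Constructions \ref{con:vert comp pseudo} and \ref{con:add mods}, and its cochain components $1_{(UVW)} + \tfrac{h^2}{24}[t_{12},t_{23}]_{UVW}$ are invertible modulo $h^3$ (with inverse obtained by flipping the sign), so Remark \ref{rem:pseudonatural endo is auto iff} promotes $\alpha$ to a pseudonatural automorphism. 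Lemma \ref{lem:pent axiom upheld} allows us to take $\Pi = \mathbf{0}$, which automatically supplies the prism axiom \eqref{eq:Prism}.

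For the triangle axiom \eqref{eq:triangle as cochain and homotopy}, the cochain half reduces to $[t_{UI}\otimes 1_W,\,1_U\otimes t_{IW}] = 0$, which holds because a strict $t$ is unital (so both $t_{UI}$ and $t_{IW}$ vanish). For the homotopy half, write $\alpha = 1 + \tfrac{h^2}{24}\beta$ with $\beta := [t_{12},t_{23}]$, so $\alpha_{f\,\boxtimes\,1_I\,\boxtimes\,g} = \tfrac{h^2}{24}\beta_{f\,\boxtimes\,1_I\,\boxtimes\,g}$; expanding the bracket using \eqref{eq:hom components of vercomp pseudos} together with the whiskering formulae \eqref{eq:whiskering pseudo by functor}, every summand contains either $t_{f\,\boxtimes\,1_I}$ or $t_{1_I\,\boxtimes\,g}$, and these vanish by \eqref{eq:t_f1=0=t_1f}.

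For the (simplified) associahedron \eqref{eq:pentagonal associahedron}, the key observation is an order-counting argument. Each of the three summands is a triple product in which the middle factor is a homotopy component $\alpha_{\xi}$ with $\xi$ built from identities and pseudonatural-transformation components of $\alpha$ itself (e.g.\ $1_{UV} \boxtimes \alpha_{WXY}$, $\alpha_{UVW}\boxtimes 1_{XY}$, $1_U\boxtimes\alpha_{VWX}\boxtimes 1_Y$). Since $\alpha_{?} = 1_? + \tfrac{h^2}{24}\beta_?$ and $\boxtimes$ is $R$-bilinear on 1-cells, each such $\xi$ splits as an identity 1-cell plus an $O(h^2)$ correction. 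The identity part is killed by \eqref{eqn: homotopy kills unit}, while applying $\alpha_\xi = \tfrac{h^2}{24}\beta_\xi$ to the $O(h^2)$ part yields an $O(h^4)$ contribution, which is zero modulo $h^3$. The surrounding cochain factors are $1 + O(h^2)$ and cannot upgrade this to a nontrivial order-$h^2$ term, so each summand of \eqref{eq:pentagonal associahedron} vanishes individually modulo $h^3$.

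The main obstacle is really just bookkeeping: one has to recognise that because $\alpha - 1$ is already of order $h^2$, every associahedron term that would mix two ``corrections'' lands in order $h^4$ and thus vanishes in $\bbK[h]/(h^3)$. Consequently the only substantive computation was the pentagon check already handled in Lemma \ref{lem:pent axiom upheld}, and the remaining axioms reduce either to unitality of strict infinitesimal 2-braidings or to the identity-annihilating property \eqref{eqn: homotopy kills unit} of pseudonatural transformations.
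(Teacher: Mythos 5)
Your proposal is correct and follows essentially the same route as the paper: the pentagon axiom is delegated to Lemma \ref{lem:pent axiom upheld}, the associahedron reduces to the order-counting observation that $\alpha-1$ is $O(h^2)$ so every homotopy component $\alpha_\xi$ with $\xi$ an identity-plus-$O(h^2)$ 1-cell vanishes modulo $h^3$ by \eqref{eqn: homotopy kills unit} and $R$-linearity, the prism axiom is automatic from $\Pi=\mathbf{0}$, and the triangle axiom follows from unitality of a strict $t$ exactly as in the paper's expansion of $[t_{12},t_{23}]_{f\,\boxtimes\,1_I\,\boxtimes\,g}$. The only cosmetic difference is that the paper isolates the associahedron check as a separate preceding lemma and leaves the triangle axiom as "the only remaining axiom", whereas you fold everything into one argument.
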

\begin{proof}
The only remaining axiom to check is the triangle axiom \eqref{eq:triangle as cochain and homotopy}. We have 
\begin{subequations}
\begin{equation}
\alpha_{UIV}=\Phi(t_{UI},t_{IV})=\Phi(0,0)=1_{((UI)V)}=1_{(UV)}
\end{equation}
and
\begin{align}
\alpha_{f\,\boxtimes\,1_I\,\boxtimes\,g}&=1_{f\,\boxtimes\,1_I\,\boxtimes\,g}+\frac{h^2}{24}[t_{12}t_{23}-t_{23}t_{12}]_{f\,\boxtimes\,1_I\,\boxtimes\,g}\nn\\&=\frac{h^2}{24}\big[(t_{f\,\boxtimes\,1_I}\otimes g)(1_U\otimes t_{I\,\boxtimes\,V})+(t_{U'\,\boxtimes\,I}\otimes1_{V'})(f\otimes t_{1_I\,\boxtimes\,g})-(f\otimes t_{1_I\,\boxtimes\,g})(t_{U\,\boxtimes\,I}\otimes1_V)\nn\\&\,\,\qquad\qquad-(1_{U'}\otimes t_{I\,\boxtimes\,V'})(t_{f\,\boxtimes\,1_I}\otimes g)\big]\nn\\&=0\quad.
\end{align}
\end{subequations}
\end{proof}
\begin{rem}
It should be obvious from the above proof that the triangle axiom \eqref{eq:triangle as cochain and homotopy} is held to all orders (beyond $h^2$) because of: the unitality of $t$, the definition of the homotopy components of the vertical composition of pseudonatural transformations \eqref{eq:hom components of vercomp pseudos} and the fact that the associator is chosen to be the Drinfeld associator series $\alpha:=\Phi(t_{12},t_{23})$ which is a series purely in terms of $t_{12}$ and $t_{23}$. Furthermore, Lemma \ref{lem:4-term rels are unital if t is} makes it obvious that the prism axiom \eqref{eq:Prism} will also hold to all orders because the pentagonator series will be constructed out of the infinitesimal 2-braiding and the 4-term relationators (see \eqref{eq:third-order of pentagonator} for the third order term).
\end{rem}
\subsubsection{Totally symmetric infinitesimal 2-braidings}\label{subsub:totally symmetric}
Before we tackle the tetrahedra, hexahedron and Breen polytope axioms in Definition \ref{def:bra str un mon cat}, we recall the important notion of total symmetry as in Cirio and Martins\footnote{Note that, throughout their joint publications, compositions of morphisms are written from left to right.} \cite[Definition 18]{Joao}.
\begin{defi}\label{def: total sym}
A \textbf{totally symmetric} infinitesimal 2-braiding is a symmetric infinitesimal 2-braiding which satisfies
\begin{subequations}
\begin{equation}
\ast^2_{(\Id_\otimes,\gamma\,\boxtimes\,\Id_{\id_\CC}),(t,\Id_{\otimes\,\boxtimes\,\id_\CC})}=\ID_{\gamma_{12}t_{(12)3}}=\mathbf{0}
\end{equation}
or, in short,
\begin{equation}\label{left total symmetry}
t_{\gamma_{UV}\,\boxtimes \,1_W}=0\quad.
\end{equation}
\end{subequations}
\end{defi}
\begin{ex}\label{ex:our t is totally symmetric}
As in Example \ref{ex:Poisson inf 2bra is Breen}, it is clear from \cite[(3.28) and (3.29)]{Us} that those infinitesimal 2-braidings induced by 2-shifted Poisson structures are totally symmetric.
\end{ex}
\begin{rem}
Under the condition of symmetry, \eqref{left total symmetry} is equivalent to 
\begin{equation}\label{right total symmetry}
t_{1_U\,\boxtimes \,\gamma_{VW}}=0\quad.
\end{equation} 
Furthermore, in \cite[Definition 18]{Joao} conditions (27) and (29) are also redundant given that we can categorify these conditions as:
\begin{subequations}
\begin{alignat}{2}
\gamma_{23}t_{1(23)}\equiv\,&t*(1\,\boxtimes\,\gamma)\quad&&,\\
\gamma_{12}t_{(12)3}\equiv\,&t*(\gamma\boxtimes1)\quad&&,\label{eq:index left totalsym}
\end{alignat}
\end{subequations}
respectively, yet these are clear instances of the exchanger which Definition \ref{def: total sym} \emph{already} tells us are identity modifications. 
\end{rem}
\begin{lem}\label{lem:total symmetry implies Breen t}
A totally symmetric infinitesimal 2-braiding is a Breen infinitesimal 2-braiding.
\end{lem}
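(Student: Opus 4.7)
The proof is essentially immediate once one parses the two definitions side by side. The Breen condition \eqref{eq:Breen t} is a relation expressing that a certain sum of two whiskered/post-whiskered homotopy components of $t$ vanishes, namely
\[
\gamma_{U(WV)}\,t_{1_U\,\boxtimes\,\gamma_{VW}}+\gamma_{(VU)W}\,t_{\gamma_{UV}\,\boxtimes\,1_W}=0\quad.
\]
Total symmetry, on the other hand, is the \emph{stronger} statement that each of the homotopy components $t_{\gamma_{UV}\boxtimes 1_W}$ and $t_{1_U\boxtimes\gamma_{VW}}$ is \emph{individually} zero: the former is \eqref{left total symmetry} and the latter is its equivalent form \eqref{right total symmetry}, whose equivalence under the symmetry of $t$ is precisely what is recorded in the paragraph immediately preceding Lemma \ref{lem:total symmetry implies Breen t}.

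Hence my plan is simply to invoke both \eqref{left total symmetry} and \eqref{right total symmetry} to kill the two summands of the Breen relation separately. Explicitly, one writes
\[
\gamma_{U(WV)}\,t_{1_U\,\boxtimes\,\gamma_{VW}}+\gamma_{(VU)W}\,t_{\gamma_{UV}\,\boxtimes\,1_W}
=\gamma_{U(WV)}\cdot 0+\gamma_{(VU)W}\cdot 0=0\quad,
\]
which is exactly the condition \eqref{eq:Breen t} defining a Breen infinitesimal 2-braiding (Definition \ref{def:Breen t}). No further axiom of total symmetry (and in particular no use of the pseudonaturality/exchange identities that produced \eqref{eq:index left totalsym}) is needed.

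There is no real obstacle here; the lemma is more of a sanity check than a theorem. The only thing worth noting is the consistency with Examples \ref{ex:Poisson inf 2bra is Breen} and \ref{ex:our t is totally symmetric}: those infinitesimal 2-braidings arising from $2$-shifted Poisson structures are, by \cite[(3.28) and (3.29)]{Us}, already observed to be both totally symmetric and Breen, so Lemma \ref{lem:total symmetry implies Breen t} simply says that in general the Breen property is a logical consequence of the (stronger) total symmetry hypothesis, and no extra structural input beyond Definitions \ref{def: strict t}, \ref{def:sym t}, \ref{def:Breen t}, and \ref{def: total sym} is required.
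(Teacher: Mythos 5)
Your proposal is correct and is essentially identical to the paper's own proof, which likewise observes that \eqref{left total symmetry} and \eqref{right total symmetry} annihilate the two summands of \eqref{eq:Breen t} separately. Nothing further is needed.
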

\begin{proof}
Obvious from \eqref{left total symmetry} and \eqref{right total symmetry}.
\end{proof}
\begin{rem}\label{rem:t_(ij)k=t_(ji)k}
Rearranging \eqref{eq:index left totalsym}, we have
\begin{subequations}
\begin{equation}
t_{(12)3}\equiv\otimes(\gamma^{-1}\boxtimes1)[t*(\gamma\boxtimes1)]\equiv\otimes(\gamma^{-1}\boxtimes1)[t*\Id_{\otimes\tau_{\CC,\CC}\,\boxtimes\,\id_\CC}]\otimes(\gamma\boxtimes1)\equiv:t_{(21)3}\quad,
\end{equation}
where the second equality comes from the condition for triviality of the exchanger in Remark \ref{rem:whiskering and exchanger}. In general, for $k\neq i\neq j\neq k$,
\begin{equation}
t_{(ij)k}\equiv t_{(ji)k}\qquad,\qquad t_{i(jk)}\equiv t_{i(kj)}\quad.
\end{equation}
\end{subequations}
\end{rem}
\begin{lem}\label{lem:totalsym further reduces independence of L/R}
Following Lemma \ref{lem:sym t reduces independence of L/R}, if the infinitesimal 2-braiding $t$ is totally symmetric then: 
\begin{equation}
\L=\R_{312}=\L_{213}=\R_{321}\,\,\,,\,\,\,
\R=\L_{231}=\R_{132}=\L_{321}\,\,\,,\,\,\,
\L_{132}=\R_{213}=\L_{312}=\R_{231}\,.
\end{equation} 
\end{lem}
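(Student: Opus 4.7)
The plan is to reduce the lemma to three additional invariances on top of the six already established in Lemma \ref{lem:sym t reduces independence of L/R}, namely
\begin{equation*}
\L=\L_{213}\,,\qquad \R=\R_{132}\,,\qquad \L_{132}=\L_{312}\,.
\end{equation*}
Once these three are in hand, each of the three chains in the statement follows directly by combining the new equality with the appropriate identities from Lemma \ref{lem:sym t reduces independence of L/R}.

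For the first invariance $\L=\L_{213}$, I would compute $\L_{VUW}=t_{t_{VU}\,\boxtimes\,1_W}$ and use the cochain symmetry $t_{VU}=\gamma_{UV}\,t_{UV}\,\gamma_{VU}$ (from \eqref{intertwine single-index} together with the involutivity of $\gamma$) to rewrite the subscript as a threefold composite $(\gamma_{UV}\boxtimes 1_W)(t_{UV}\boxtimes 1_W)(\gamma_{VU}\boxtimes 1_W)$. Applying the product-splitting axiom \eqref{eqn:dubindex splits prods} of pseudonaturality twice produces four summands, three of which carry a factor of $t_{\gamma_{UV}\,\boxtimes\,1_W}$ or $t_{\gamma_{VU}\,\boxtimes\,1_W}$ and therefore vanish by total symmetry \eqref{left total symmetry}. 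The surviving summand is exactly $(\gamma_{UV}\otimes 1_W)\,\L_{UVW}\,(\gamma_{VU}\otimes 1_W)$, which after conjugating back is precisely $\L_{UVW}=\L_{213,UVW}$.

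The invariance $\R=\R_{132}$ is proved by the completely analogous manipulation in the second and third tensor slots, using \eqref{right total symmetry} in place of \eqref{left total symmetry}. For $\L_{132}=\L_{312}$, the starting point is $\L_{312,UVW}=\gamma_{W(UV)}\,\L_{WUV}\,\gamma_{(UV)W}$, where the flanking braidings implement the cyclic permutation $(123)\mapsto(312)$. Running the same split-and-kill argument on $\L_{WUV}=t_{t_{WU}\,\boxtimes\,1_V}$ (with $t_{WU}=\gamma_{UW}t_{UW}\gamma_{WU}$) reduces it to $(\gamma_{UW}\otimes 1_V)\,\L_{UWV}\,(\gamma_{WU}\otimes 1_V)$, and then the hexagon identity of Remark \ref{rem:symstrict mon ChCat defi}, $\gamma_{(UV)W}=(\gamma_{UW}\otimes 1_V)(1_U\otimes\gamma_{VW})$, makes the outer $\gamma$'s collapse into $1_U\otimes\gamma_{WV}$ and $1_U\otimes\gamma_{VW}$, delivering exactly $\L_{132,UVW}$.

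The conceptual core of the proof is the single observation that total symmetry \eqref{left total symmetry}, \eqref{right total symmetry} lets one freely insert or extract $\gamma\boxtimes 1$ and $1\boxtimes\gamma$ inside homotopy components of $t$ without paying any price, so the cochain symmetry $\gamma_{UV}t_{UV}=t_{VU}\gamma_{UV}$ propagates untouched to the level of 4-term relationators. The main bookkeeping obstacle will be tracking the several $\gamma$-conjugations and checking that they cancel against the hexagon identities for $\gamma$, but no new ingredient beyond pseudonaturality, symmetry, and total symmetry is required.
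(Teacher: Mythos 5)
Your proof is correct and follows essentially the same route as the paper's: expand the homotopy component of $t$ on the composite $\gamma\,t\,\gamma$ via the product-splitting axiom \eqref{eqn:dubindex splits prods}, kill the summands carrying $t_{\gamma\,\boxtimes\,1}$ or $t_{1\,\boxtimes\,\gamma}$ by total symmetry \eqref{left total symmetry}/\eqref{right total symmetry}, and invoke \eqref{intertwine single-index}; the paper writes out only $\L=\L_{213}$ and declares the remaining relations analogous, whereas you make explicit the reduction to the three extra invariances (combined with Lemma \ref{lem:sym t reduces independence of L/R}) and the hexagon-identity bookkeeping needed for $\L_{132}=\L_{312}$. (One trivial slip: splitting a threefold composite yields three summands, of which two vanish, not four and three.)
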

\begin{proof}
Let us show that $\L=\L_{213}$, i.e.,
\begin{equation}
\L_{UVW}=(\gamma_{VU}\otimes1_W)\L_{VUW}(\gamma_{UV}\otimes1_W)\quad\implies\quad t_{t_{UV}\,\boxtimes\,1_W}=(\gamma_{VU}\otimes1_W)t_{t_{VU}\,\boxtimes\,1_W}(\gamma_{UV}\otimes1_W)\,.
\end{equation}
We can use the total symmetry \eqref{left total symmetry} together with the fact that the homotopy components of a pseudonatural transformation splits products \eqref{eqn:dubindex splits prods} to rewrite the latter expression as $t_{\gamma_{VU}t_{VU}\gamma_{UV}\,\boxtimes\,1_W}$ yet this is clearly equal to $t_{t_{UV}\,\boxtimes\,1_W}$ because of \eqref{intertwine single-index}. The proofs for the other relations work in the same way.
\end{proof}
\begin{rem}
Following on from Remark \ref{rem:pre-hex index swap almost equal}, Lemma \ref{lem:totalsym further reduces independence of L/R} gives us, for a totally symmetric $t$,
\begin{subequations}
\begin{equation}
\hh^R_{321}=\R_{132}+2\L_{213}=2\L+\R=\hh^L\quad.
\end{equation}
In fact, under this condition of total symmetry, 
\begin{equation}
\RR_{321}:\Phi(t_{32},t_{13})e^{\frac{h}{2}t_{(32)1}}\Phi(t_{21},t_{32})\Rrightarrow e^{\frac{h}{2}t_{31}}\Phi(t_{12},t_{31})e^{\frac{h}{2}t_{21}}
\end{equation}
\end{subequations}
but the domain/codomain pseudonatural automorphism is equal to that of \eqref{eq:ansatz left hexagonator} upon using \eqref{eq:sym t notation} and Remark \ref{rem:t_(ij)k=t_(ji)k}. In sum, a totally symmetric $t$ means that we can choose $\LL:=\RR_{321}$.
\end{rem}
The curious thing about Definition \ref{def: total sym} is that, whereas symmetry and strictness of $t$ are two independent properties, total symmetry and strictness of $t$ \emph{together} form a sufficient condition for ``strictness" of the 4-term relationators.
\begin{lem}[$\L$ is \textbf{primitive}]\label{lem: L is primitive}Given a totally symmetric strict infinitesimal 2-braiding, the left 4-term relationator satisfies the following ``primitive coproduct" formulae:
\begin{equation}
\L_{12(34)}=\L_{123}+\L_{124}\quad,\quad\L_{1(23)4}=\L_{124}+\L_{134}\quad,\quad\L_{(12)34}=\L_{134}+\L_{234}\,.
\end{equation}
\end{lem}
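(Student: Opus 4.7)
The plan is to prove the three primitive-coproduct identities by reducing them to the analogous primitive coproduct of $t$ itself (Remark \ref{rem: shorthand t notation}, item \ref{item:strict t is primitive}). Since $\L_{UVW}=t_{t_{UV}\boxtimes 1_W}$ is by definition a homotopy component of $t$ at the 1-cell $t_{UV}\boxtimes 1_W\in(\CC\boxtimes\CC)[UVW,UVW]^0$, one expects its primitivity to follow from the combination of (a) the primitive coproduct of $t$ itself (strictness), (b) pseudonaturality of $t$, i.e.\ the product-splitting \eqref{eqn:dubindex splits prods} together with $R$-linearity of the homotopy-component map \eqref{eqn:doubleindexedcomponents}, and (c) total symmetry, which eliminates the $\gamma$-twisted homotopy components $t_{\gamma\boxtimes 1}=0$ that pop up in intermediate steps.

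The cleanest identity to do first is $\L_{12(34)}=\L_{123}+\L_{124}$. Write $(\L_{12(34)})_{UVWX}=t_{t_{UV}\boxtimes(1_W\otimes 1_X)}$ (using functoriality of $\otimes$ to split $1_{(WX)}=1_W\otimes 1_X$), and apply the homotopy-component left infinitesimal hexagon \eqref{eq:homotopy left infhex} with $f=t_{UV}$, $g=1_W$, $k=1_X$. The right-hand side splits into $t_{t_{UV}\boxtimes 1_W}\otimes 1_X$ plus a $\gamma_{(UV),W}$-conjugated copy of $t_{t_{UV}\boxtimes 1_X}$, which are by definition $(\L_{123})_{UVWX}$ and $(\L_{124})_{UVWX}$ in their canonical $\gamma$-shuffled forms.

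For $\L_{1(23)4}=\L_{124}+\L_{134}$, I would start from $(\L_{1(23)4})_{UVWX}=t_{t_{U(VW)}\boxtimes 1_X}$, expand $t_{U(VW)}$ using the cochain left infinitesimal hexagon \eqref{eq:cochain left infhex}, and then invoke $R$-linearity of $t_{(-)}$. The $t_{UV}\otimes 1_W$-summand, processed via the right infinitesimal hexagon \eqref{eq:homotopy right infhex} with the $t_{UV}\otimes t_{1_W\boxtimes 1_X}$-piece killed by unit annihilation \eqref{eqn: homotopy kills unit}, becomes $(\L_{124})_{UVWX}$. The $(\gamma_{VU}\otimes 1_W)(1_V\otimes t_{UW})(\gamma_{UV}\otimes 1_W)$-summand requires the three-fold expansion of \eqref{eqn:dubindex splits prods} applied to the composite after $\boxtimes 1_X$; the two outer terms $t_{(\gamma_{\bullet}\otimes 1_W)\boxtimes 1_X}$ each vanish via \eqref{eq:homotopy right infhex}, since one summand dies by \eqref{eqn: homotopy kills unit} and the other by total symmetry \eqref{left total symmetry}; the surviving middle term collapses (one more application of \eqref{eq:homotopy right infhex} plus \eqref{eqn: homotopy kills unit}) to $(\gamma_{VU}\otimes 1_W\otimes 1_X)(1_V\otimes\L_{UWX})(\gamma_{UV}\otimes 1_W\otimes 1_X)=(\L_{134})_{UVWX}$. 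The third identity $\L_{(12)34}=\L_{134}+\L_{234}$ is entirely analogous, starting instead from the cochain right infinitesimal hexagon \eqref{eq:cochain right infhex} applied to $t_{(UV)W}$.

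The hard part is not any individual step but the combinatorial bookkeeping: one must track which $t_{\gamma\boxtimes 1}$ and $t_{1\boxtimes 1}$-type homotopy components vanish, and verify that the surviving $\gamma$-conjugations precisely match the canonical shuffled forms encoded in the index notation for $\L_{ijk}$. Total symmetry is the crucial ingredient in this reduction because it is precisely what permits outer $\gamma$-factors to ``pass through'' the $t_{(-)}$-operation, exposing each $\L_{ijk}$ in its canonical form; without it, the expansions would retain genuine $\gamma$-twisted terms that could not be absorbed into any $\L_{ijk}$.
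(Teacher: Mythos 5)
Your proposal is correct and follows essentially the same route as the paper's proof: write out the components $\L_{UVW}=t_{t_{UV}\boxtimes 1_W}$, expand via the cochain and homotopy components of the left/right infinitesimal hexagon relations, and use product-splitting \eqref{eqn:dubindex splits prods}, unit-annihilation \eqref{eqn: homotopy kills unit} and total symmetry \eqref{left total symmetry} to kill the stray $\gamma$-twisted terms, exactly as the paper does. The only cosmetic differences are that the paper compresses your three-fold product-splitting step into a single line, and that the first identity in fact needs no total symmetry at all (only the homotopy left hexagon relation), which both you and the paper implicitly respect.
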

\begin{proof}
\begin{subequations}
\begin{align}
\L_{UV(WX)}=&t_{t_{UV}\,\boxtimes \,(1_W\otimes 1_X)}\nn\\=&t_{t_{UV}\,\boxtimes \,1_W}\otimes 1_X+(\gamma_{W(UV)}\otimes 1_X)(1_W\otimes t_{t_{UV}\,\boxtimes \,1_X})(\gamma_{(UV)W}\otimes 1_X)\nn\\=&\L_{UVW}\otimes 1_X+(\gamma_{W(UV)}\otimes 1_X)(1_W\otimes \L_{UV X})(\gamma_{(UV)W}\otimes 1_X)\quad,
\end{align}
for the second equality we have used the homotopy components of the left infinitesimal hexagon relation \eqref{eq:homotopy left infhex}.
\begin{align}
\L_{U(VW)X}=&t_{t_{U(VW)}\,\boxtimes \,1_X}\\=&t_{\big(t_{UV}\otimes 1_W+(\gamma_{VU}\otimes 1_W)(1_V\otimes t_{UW})(\gamma_{UV}\otimes 1_W)\big)\boxtimes \,1_X}\nn\\=&t_{UV}\otimes t_{1_W\,\boxtimes \,1_X}+(1_{(UV)}\otimes \gamma_{XW})(t_{t_{UV}\,\boxtimes \,1_X}\otimes1_W)(1_{(UV)}\otimes\gamma_{WX})\nn\\&+(\gamma_{VU}\otimes 1_{(WX)})t_{(1_V\otimes t_{UW})\boxtimes \,1_X}(\gamma_{UV}\otimes 1_{(WX)})\nn\\=&(1_{(UV)}\otimes \gamma_{XW})(\L_{UVX}\otimes 1_W)(1_{(UV)}\otimes \gamma_{WX})+(\gamma_{VU}\otimes 1_{(WX)})(1_V\otimes \L_{UWX})(\gamma_{UV}\otimes 1_{(WX)})\quad,\nn
\end{align}
for the second equality we have used the cochain components of the left infinitesimal hexagon relation \eqref{eq:cochain left infhex}. For the first two terms of the third equality we have used the homotopy components of the right infinitesimal hexagon relation \eqref{eq:homotopy right infhex}, and for the third term we have simultaneously used both the fact that the homotopy components split products \eqref{eqn:dubindex splits prods} and the total symmetry \eqref{left total symmetry}. The last equality used the fact that the homotopy components kill units \eqref{eqn: homotopy kills unit}.
\begin{align}
\L_{(UV)WX}=&t_{t_{(UV)W}\,\boxtimes \,1_X}\nn\\=&t_{\big(1_U\otimes t_{VW}+(1_U\otimes\gamma_{WV})(t_{UW}\otimes 1_V)(1_U\otimes \gamma_{VW})\big)\boxtimes \,1_X}\nn\\=&1_U\otimes \L_{VWX}+(1_U\otimes\gamma_{(WX)V})(\L_{UWX}\otimes1_V)(1_U\otimes\gamma_{V(WX)})\quad.
\end{align}
\end{subequations}
Again, the second equality uses the right infinitesimal hexagon relation \eqref{subeq:right inf hex relations explicitly} and the third equality uses it multiple times together with the total symmetry and the facts that the homotopy components split products and annihilate units.
\end{proof}
\begin{cor}\label{cor:inf hex are primitive}
$\L$ and $\R$ are primitive and so too are the infinitesimal hexagonators \eqref{eq:inf hexagonators}.
\end{cor}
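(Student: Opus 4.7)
The primitivity of $\L$ is already the content of Lemma \ref{lem: L is primitive}, so three steps remain: formulate the analogous identities for the right 4-term relationator $\R$, establish them, and transfer primitivity to the $R$-linear combinations $\hh^L$ and $\hh^R$. First I would write down the three desired ``primitive coproduct'' identities for $\R$, namely
\begin{align*}
\R_{12(34)} &\equiv \R_{123} + \R_{124},\\
\R_{1(23)4} &\equiv \R_{124} + \R_{134},\\
\R_{(12)34} &\equiv \R_{134} + \R_{234},
\end{align*}
using the shorthand of Remark \ref{rem: shorthand t notation}.

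My plan is to establish these by mirroring the Lemma \ref{lem: L is primitive} calculation, starting from $\R_{UVW} = t_{1_U \boxtimes t_{VW}}$ and expanding the inner $t_{VW}$ via the appropriate infinitesimal hexagon relation in \eqref{subeq:left inf hex relations explicitly} and \eqref{subeq:right inf hex relations explicitly}. The same four ingredients used in Lemma \ref{lem: L is primitive} suffice: product-splitting \eqref{eqn:dubindex splits prods} and unit-annihilation \eqref{eqn: homotopy kills unit} of the homotopy components of $t$, together with the total-symmetry cancellations \eqref{left total symmetry}/\eqref{right total symmetry} that annihilate all the $\gamma$-conjugation residue produced by the hexagon relations. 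Alternatively, and much more cheaply, one can invoke Lemma \ref{lem:totalsym further reduces independence of L/R}, which under total symmetry gives $\R \equiv \L_{231}$, and deduce the three identities above by cyclically reindexing the three formulae already established in Lemma \ref{lem: L is primitive}.

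Finally, primitivity of the infinitesimal hexagonators $\hh^L = 2\L + \R$ and $\hh^R = \L + 2\R$ is immediate: by Construction \ref{con:add mods}, the $R$-linear addition and scaling of modifications is componentwise, so the three primitivity identities are $R$-linear in the 4-term relationators, and any $R$-linear combination of primitive modifications is again primitive. No genuine obstacle arises in this corollary; the real work was done in Lemma \ref{lem: L is primitive}, and what remains is bookkeeping plus one application of the total-symmetry identity $\R \equiv \L_{231}$. The only minor care needed is in tracking the $\gamma$-conjugations produced by the hexagon relations when expanding $t_{VW}$ under a compound index, but the total-symmetry hypothesis is precisely what ensures these conjugations disappear.
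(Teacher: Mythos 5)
Your proposal is correct and matches the intended (unwritten) argument: the paper leaves this corollary without proof precisely because primitivity of $\R$ follows either by mirroring the computation of Lemma \ref{lem: L is primitive} or, more cheaply, from $\R\equiv\L_{231}$ under total symmetry, and primitivity passes to the $R$-linear combinations $\hh^L=2\L+\R$ and $\hh^R=\L+2\R$ componentwise via Construction \ref{con:add mods}. No gap.
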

This corollary is quite powerful as exemplified by the following proposition.
\begin{propo}\label{prop:SM3/4 hold}
Given a totally symmetric strict infinitesimal 2-braiding $t$,
\begin{equation}\label{eq:assbrahex structure of order 3}
\left(\CC_{\bbK[h]/(h^3)}\,,\,\otimes\,,I\,,\,\alpha:\equiv1+\frac{h^2}{24}[t_{12},t_{23}]\,,\,\sigma:\equiv\gamma e^{\frac{h}{2}t}\,,\,\H^L:=\frac{h^2}{24}\gamma_{1(23)}\hh^L\,,\,\H^R:=\frac{h^2}{24}\gamma_{(12)3}\hh^R\right)
\end{equation}
satisfies the left/right tetrahedron axiom \eqref{eq:left tetrahedron}/\eqref{eq:right tetrahedron} and the hexahedron axiom \eqref{eq:Hexahedron}.
\end{propo}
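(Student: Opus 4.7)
The plan is to verify each of the three axioms by expanding both sides modulo $h^3$ and matching coefficients order by order. Since $\Pi=0$, $\alpha\equiv 1+\tfrac{h^2}{24}[t_{12},t_{23}]$, $\sigma\equiv\gamma\bigl(1+\tfrac{h}{2}t+\tfrac{h^2}{8}t^2\bigr)$ and $\H^L,\H^R$ are $O(h^2)$, the only relevant orders are $h^0,h^1,h^2$. Identity pseudonatural transformations have trivial homotopy components (Example \ref{ex:identity pseudonatural transformation}), hence $\alpha_{f\,\boxtimes\,g\,\boxtimes\,k}$ is itself $O(h^2)$ and $\sigma_{f\,\boxtimes\,g}\equiv\tfrac{h}{2}\gamma_{U'V'}t_{f\,\boxtimes\,g}+O(h^2)$ because $\gamma$ is $\Ch_R^{[-1,0]}$-natural. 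This orderwise decomposition governs every subsequent step.

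At order $h^0$ each pasting collapses to a commutative diagram of $\gamma$-components which holds by naturality of $\gamma$, involutivity \eqref{involutory sigma} and the hexagon axiom \eqref{eq:tau satisfies hex}. At order $h^1$ the only contributions come from the $\tfrac{h}{2}t$ piece of $\sigma$, and the resulting equalities reduce to the cochain/homotopy components of the strict infinitesimal hexagon relations \eqref{subeq:left inf hex relations explicitly}/\eqref{subeq:right inf hex relations explicitly} coupled with the symmetry \eqref{intertwine single-index}/\eqref{intertwine homotopy}---exactly what strictness and symmetry of $t$ deliver.

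At order $h^2$ the contributions split into four types: (i) the $\tfrac{h^2}{8}\gamma t^2$ piece of $\sigma$; (ii) the $\tfrac{h^2}{24}\gamma[t_{12},t_{23}]$ piece of $\alpha$ when it enters as a cochain factor; (iii) homotopy components $\alpha_{f\,\boxtimes\,g\,\boxtimes\,k}$ or $\sigma_{f\,\boxtimes\,g}$ paired with another $h$-factor; and (iv) the hexagonator terms $\tfrac{h^2}{24}\gamma\hh^L$ and $\tfrac{h^2}{24}\gamma\hh^R$. For the left tetrahedron \eqref{eq:left tetrahedron}, the LHS carries $\hh^L_{UV(WX)}$ whereas the RHS carries a combination containing $\hh^L_{U(VW)X}$, $\hh^L_{UVW}\otimes 1_X$ and $1_V\otimes\hh^L_{UWX}$, up to conjugations by $\gamma$ and by $\alpha=1+O(h^2)$ which are trivial at this order. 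Corollary \ref{cor:inf hex are primitive} matches the two sides, while the residual contributions of types (ii) and (iii) reduce by $R$-linearity and total symmetry \eqref{left total symmetry}/\eqref{right total symmetry} (which kills all $t_{\gamma\,\boxtimes\,1}$-type terms) to the $\L$-commutator pieces already present on both sides. The right tetrahedron follows from the mirror argument built on the $\hh^R$ primitive coproduct.

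The main obstacle is the hexahedron axiom \eqref{eq:Hexahedron}, which entangles $\H^L$, $\H^R$ and many crossings of $\sigma$ and $\alpha$ over four tensor factors. Two simplifications make the bookkeeping manageable: $\Pi=0$ eliminates the numerous $\Pi^{\pm 1}$-terms directly, and the identities $[t_{12},t_{34}]=0=[t_{13},t_{24}]$ established in the proof of Lemma \ref{lem:pent axiom upheld} trivialise $\alpha$-homotopy components of the form $\alpha_{1\,\boxtimes\,1\,\boxtimes\,\sigma_{UX}}$ and their cousins. After expansion and collection, the $h^2$ identity reduces to a combinatorial equality in $\L$ and $\R$ evaluated at various three-tuples of objects; Lemmas \ref{lem:sym t reduces independence of L/R} and \ref{lem:totalsym further reduces independence of L/R} permit identifying $\L$- and $\R$-components across index permutations, and the primitive coproducts of $\hh^L,\hh^R$ from Corollary \ref{cor:inf hex are primitive} close the identity.
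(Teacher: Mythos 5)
Your proposal takes essentially the same route as the paper: isolate the order-$h^2$ relation, kill the $\alpha$-homotopy components evaluated at $\gamma$'s via total symmetry \eqref{left total symmetry}/\eqref{right total symmetry}, strip the remaining $\gamma$-conjugations with the hexagon axiom for $\gamma$, and close the resulting identity among the $\hh^{L}$- and $\hh^{R}$-components with the primitivity of Corollary \ref{cor:inf hex are primitive}. One bookkeeping correction: every term of \eqref{eq:left tetrahedron}, \eqref{eq:right tetrahedron} and \eqref{eq:Hexahedron} contains exactly one homotopy-component factor, and all such factors ($\H^{L}$, $\H^{R}$, the $\alpha$- and $\sigma$-homotopy components, $\Pi=\mathbf{0}$) are already $O(h^2)$, so orders $h^0$ and $h^1$ are vacuous rather than invoking the infinitesimal hexagon relations, your contribution types (i) and (ii) --- the $\tfrac{h^2}{8}\gamma t^2$ part of $\sigma$ and the $\tfrac{h^2}{24}[t_{12},t_{23}]$ part of $\alpha$ entering as cochain factors --- first matter at order $h^3$, and the surviving non-hexagonator terms are exactly the $\alpha$-homotopy components at $\gamma$-subscripts, which vanish by total symmetry (not by the disjoint-commutation relations $[t_{12},t_{34}]=0=[t_{13},t_{24}]$, which play no role here).
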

\begin{proof}
With trivial pentagonator, the left tetrahedron axiom \eqref{eq:left tetrahedron} reads as 
\begin{subequations}
\begin{align}
&(1_V\otimes\alpha_{WXU})\alpha_{V(WX)U}\sigma_{1_U\,\boxtimes \,\alpha_{VWX}}\alpha_{U(VW)X}(\alpha_{UVW}\otimes1_X)\nn\\
&+(1_V\otimes\alpha_{WXU})\H^L_{UV(WX)}\alpha_{(UV)WX}+(1_V\otimes\alpha_{WXU}\sigma_{U(WX)})\alpha_{VU(WX)}\alpha_{\sigma_{UV}\,\boxtimes \,1_W\,\boxtimes \,1_X}\nn\\&=\\
&\alpha_{VW(XU)}\H^L_{U(VW)X}(\alpha_{UVW}\otimes1_X)-\alpha_{1_V\,\boxtimes\,1_W\,\boxtimes\,\sigma_{UX}}\alpha_{(VW)UX}(\sigma_{U(VW)}\alpha_{UVW}\otimes1_X)\nn\\&+\left(1_V\otimes(1_W\otimes\sigma_{UX})\alpha_{WUX}\right)\alpha_{V(WU)X}(\H^L_{UVW}\otimes1_X)-\left(1_V\otimes\H^L_{UWX}\right)\alpha_{V(UW)X}\left(\alpha_{VUW}(\sigma_{UV}\otimes1_W)\otimes1_X\right)\nn\\&-\left(1_V\otimes(1_W\otimes\sigma_{UX})\alpha_{WUX}\right)\alpha_{1_V\,\boxtimes \,\sigma_{UW}\,\boxtimes \,1_X}\left(\alpha_{VUW}(\sigma_{UV}\otimes1_W)\otimes1_X\right)\quad.\nn
\end{align}
The order $h^2$ relation is given by
\begin{align}
\nn&\left(\gamma_{1(23)}\hh^L\right)_{UV(WX)}+(1_V\otimes\gamma_{U(WX)})[t_{12}\,,t_{23}]_{\gamma_{UV}\,\boxtimes \,1_W\,\boxtimes \,1_X}\\&=\label{eqn:h^2 of SM3.i}\\&\left(\gamma_{1(23)}\hh^L\right)_{U(VW)X}-[t_{12}\,,t_{23}]_{1_V\,\boxtimes \,1_W\,\boxtimes \,\gamma_{UX}}(\gamma_{U(VW)}\otimes1_X)\nn\\&+\left(1_{(VW)}\otimes\,\gamma_{UX}\right)\left(\left(\gamma_{1(23)}\hh^L\right)_{UVW}\otimes1_X\right)-\left(1_V\otimes\,\left(\gamma_{1(23)}\hh^L\right)_{UWX}\right)\left(\gamma_{UV}\otimes1_{(WX)}\right)\nn\\&-\left(1_{(VW)}\otimes\gamma_{UX}\right)[t_{12}\,,t_{23}]_{1_V\,\boxtimes \,\gamma_{UW}\,\boxtimes \,1_X}\left(\gamma_{UV}\otimes1_{(WX)}\right)\quad.\nn
\end{align}
Terms such as
\begin{equation}
\left(t_{12}t_{23}-t_{23}t_{12}\right)_{\gamma_{UV}\,\boxtimes \,1_W\,\boxtimes \,1_X}=\left(t_{\gamma_{UV}\,\boxtimes \,1_W}\otimes1_X\right)\left(1_{(UV)}\otimes t_{WX}\right)-\left(1_{(VU)}\otimes t_{WX}\right)\left(t_{\gamma_{UV}\,\boxtimes \,1_W}\otimes1_X\right)
\end{equation}
are trivial by the condition of total symmetry \eqref{left total symmetry} thus \eqref{eqn:h^2 of SM3.i} simplifies to
\begin{align}
\left(\gamma_{1(23)}\hh^L\right)_{UV(WX)}
=\left(\gamma_{1(23)}\hh^L\right)_{U(VW)X}&+\left(1_{(VW)}\otimes\,\gamma_{UX}\right)\left(\left(\gamma_{1(23)}\hh^L\right)_{UVW}\otimes1_X\right)\nn\\&-\left(1_V\otimes\,\left(\gamma_{1(23)}\hh^L\right)_{UWX}\right)\left(\gamma_{UV}\otimes1_{(WX)}\right)\quad.
\end{align}
This is simply
\begin{align}
\gamma_{U(VWX)}\hh^L_{UV(WX)}
=\gamma_{U(VWX)}\hh^L_{U(VW)X}&+\left(1_{(VW)}\otimes\,\gamma_{UX}\right)\left(\gamma_{U(VW)}\hh^L_{UVW}\otimes1_X\right)\nn\\&-\left(1_V\otimes\,\gamma_{U(WX)}\hh^L_{UWX}\right)\left(\gamma_{UV}\otimes1_{(WX)}\right)\quad.
\end{align}
The symmetric braiding $\gamma$ satisfies the hexagon axiom hence we can simplify even further,
\begin{equation}
\hh^L_{UV(WX)}
=\hh^L_{U(VW)X}+\hh^L_{UVW}\otimes1_X-\left(\gamma_{VU}\otimes1_{(WX)}\right)\left(1_V\otimes\,\hh^L_{UWX}\right)\left(\gamma_{UV}\otimes1_{(WX)}\right)\quad.
\end{equation}
We finally rewrite this as 
\begin{equation}
\hh^L_{12(34)}=\hh^L_{1(23)4}+\hh^L_{123}-\hh^L_{134}
\end{equation}
and apply Corollay \ref{cor:inf hex are primitive}.

Comparing \eqref{eq:left tetrahedron} with \eqref{eq:right tetrahedron} reveals that the right tetrahedron axiom would eventually reduce down to a similar equation as the above but now with $\hh^L$ replaced with $\hh^R$.

With trivial pentagonator, the hexahedron axiom \eqref{eq:Hexahedron} reads as 
\begin{footnotesize}
\begin{align}
&\alpha_{W(XU)V}(\alpha_{WXU}\otimes1_V)\H^R_{UV(WX)}(1_U\otimes\alpha_{VWX})\alpha_{U(VW)X}-(1_W\otimes\alpha^{-1}_{XUV})\H^L_{(UV)WX}(\alpha^{-1}_{UVW}\otimes1_X)\nn\\&=\\&(1_W\otimes\,\H^R_{UVX})\alpha_{WU(VX)}\alpha_{(WU)VX}(\alpha^{-1}_{WUV}\sigma_{(UV)W}\alpha^{-1}_{UVW}\otimes1_X)\nn\\&+[1_W\otimes(\sigma_{UX}\otimes1_V)\alpha^{-1}_{UXV}(1_U\otimes\sigma_{VX})]\alpha_{WU(VX)}\left[\alpha_{(WU)VX}(\H^R_{UVW}\otimes1_X)-\alpha_{\sigma_{UW}\boxtimes1_{VX}}\left(\alpha^{-1}_{UWV}(1_U\otimes\sigma_{VW})\otimes1_X\right)\right]\nn\\&+(1_W\otimes[\sigma_{UX}\otimes1_V]\alpha^{-1}_{UXV})\alpha_{1_{WU}\boxtimes\sigma_{VX}}(\sigma_{UW}\otimes1_{(VX)})\alpha_{(UW)VX}\left(\alpha^{-1}_{UWV}[1_U\otimes\sigma_{VW}]\otimes1_X\right)\nn\\&+\alpha_{1_W\boxtimes\sigma_{UX}\boxtimes1_V}(\alpha_{WUX}\otimes1_V)\alpha^{-1}_{(WU)XV}[\sigma_{UW}\otimes\sigma_{VX}]\alpha_{(UW)VX}\left[\alpha^{-1}_{UWV}[1_U\otimes\sigma_{VW}]\otimes1_X\right]\nn\\&-\alpha_{W(XU)V}\left[(1_W\otimes\sigma_{UX})\alpha_{WUX}\otimes1_V\right]\alpha^{-1}_{(WU)XV}(\sigma_{UW}\otimes\sigma_{VX})\alpha^{-1}_{UW(VX)}(1_U\otimes\alpha_{WVX})\alpha_{1_U\boxtimes\sigma_{VW}\boxtimes1_X}\nn\\&-\alpha_{W(XU)V}\left[(1_W\otimes\sigma_{UX})\alpha_{WUX}\otimes1_V\right]\alpha^{-1}_{\sigma_{UW}\boxtimes1_{XV}}(1_{(UW)}\otimes\sigma_{VX})\alpha^{-1}_{UW(VX)}[1_U\otimes\alpha_{WVX}(\sigma_{VW}\otimes1_X)]\alpha_{U(VW)X}\nn\\&+\alpha_{W(XU)V}\left[(1_W\otimes\sigma_{UX})\alpha_{WUX}(\sigma_{UW}\otimes1_X)\otimes1_V\right]\alpha^{-1}_{(UW)XV}\alpha^{-1}_{1_{UW}\boxtimes\sigma_{VX}}[1_U\otimes\alpha_{WVX}(\sigma_{VW}\otimes1_X)]\alpha_{U(VW)X}\nn\\&-\alpha_{W(XU)V}(\H^L_{UWX}\otimes1_V)\alpha^{-1}_{(UW)XV}\alpha^{-1}_{UW(XV)}[1_U\otimes(1_W\otimes\sigma_{VX})\alpha_{WVX}(\sigma_{VW}\otimes1_X)]\alpha_{U(VW)X}\nn\\&-\alpha_{W(XU)V}\left(\alpha_{WXU}\sigma_{U(WX)}\alpha_{UWX}\otimes1_V\right)\alpha^{-1}_{(UW)XV}\alpha^{-1}_{UW(XV)}(1_U\otimes\H^L_{VWX})\alpha_{U(VW)X}\qquad.\nn
\end{align}
\end{footnotesize}
We use the comment from above that total symmetry annihilates the associator homotopy components of the symmetric braiding (e.g., $\alpha_{\gamma_{UW}\,\boxtimes\,1_{VX}}=0$) to derive the order $h^2$ relation,
\begin{align}
&[\gamma_{(12)3}\hh^R]_{UV(WX)}-[\gamma_{1(23)}\hh^L]_{(UV)WX}\nn\\&=\\&(1_W\otimes[\gamma_{(12)3}\hh^R]_{UVX})(\gamma_{(UV)W}\otimes1_X)+[1_W\otimes(\gamma_{UX}\otimes1_V)(1_U\otimes\gamma_{VX})]([\gamma_{(12)3}\hh^R]_{UVW}\otimes1_X)\nn\\&-([\gamma_{1(23)}\hh^L]_{UWX}\otimes1_V)[1_U\otimes(1_W\otimes\gamma_{VX})(\gamma_{VW}\otimes1_X)]-\left(\gamma_{U(WX)}\otimes1_V\right)(1_U\otimes[\gamma_{1(23)}\hh^L]_{VWX})\qquad.\nn
\end{align}
This is simply
\begin{align}
&\gamma_{(UV)(WX)}\hh^R_{UV(WX)}-\gamma_{(UV)(WX)}\hh^L_{(UV)WX}\nn\\&=\\&\left[1_W\otimes\gamma_{(UV)X}\hh^R_{UVX}\right]\left[\gamma_{(UV)W}\otimes1_X\right]+\left[1_W\otimes\gamma_{(UV)X}\right]\left[\gamma_{(UV)W}\hh^R_{UVW}\otimes1_X\right]\nn\\&-\left[\gamma_{U(WX)}\hh^L_{UWX}\otimes1_V\right]\left[1_U\otimes\gamma_{V(WX)}\right]-\left[\gamma_{U(WX)}\otimes1_V\right]\left[1_U\otimes\gamma_{V(WX)}\hh^L_{VWX}\right]\qquad.\nn
\end{align}
Again, the hexagon axiom that $\gamma$ satisfies as a symmetric braiding means we can write
\begin{equation}
\gamma_{(UV)(WX)}=\left[1_W\otimes\gamma_{(UV)X}\right]\left[\gamma_{(UV)W}\otimes1_X\right]=\left[\gamma_{U(WX)}\otimes1_V\right]\left[1_U\otimes\gamma_{V(WX)}\right]
\end{equation}
which finally leaves us with 
\begin{align}
\hh^R_{UV(WX)}-\hh^L_{(UV)WX}=&\left(\gamma_{W(UV)}\otimes1_X\right)\left(1_W\otimes\hh^R_{UVX}\right)\left(\gamma_{(UV)W}\otimes1_X\right)+\hh^R_{UVW}\otimes1_X\nn\\&-\left(1_U\otimes\gamma_{(WX)V}\right)\left(\hh^L_{UWX}\otimes1_V\right)\left(1_U\otimes\gamma_{V(WX)}\right)-1_U\otimes\hh^L_{VWX}
\end{align}
\end{subequations}
but, once again, we know the infinitesimal hexagonators to be primitive as in Corollay \ref{cor:inf hex are primitive}.
\end{proof}

\subsubsection{Coherent infinitesimal 2-braidings and the Breen polytope}
Before we discuss the Breen polytope axiom \eqref{eqn:Breen axiom}, we quickly recall another important notion in CM; this time the notion of coherency \cite[Defintion 17]{Joao}. 
\begin{defi}\label{def: coherency}
An infinitesimal 2-braiding is \textbf{coherent} if it satisfies
\begin{subequations}
\begin{equation}
-(\gamma_{VU}\otimes1_W)\R_{VUW}(\gamma_{UV}\otimes1_W)=\L_{UVW}+\R_{UVW}=-(1_U\otimes\,\gamma_{WV})\L_{UWV}(1_U\otimes\,\gamma_{VW}),
\end{equation}
that is,
\begin{equation}
-(\gamma_{VU}\otimes1_W)t_{1_V\,\boxtimes \,t_{UW}}(\gamma_{UV}\otimes1_W)=t_{t_{UV}\,\boxtimes \,1_W}+t_{1_U\,\boxtimes \,t_{VW}}=-(1_U\otimes\,\gamma_{WV})t_{t_{UW}\,\boxtimes \,1_V}(1_U\otimes\,\gamma_{VW})\quad.
\end{equation}
\end{subequations}
\end{defi}
\begin{rem}\label{rem: coherency is natural}
As in Lemma \ref{lem:sym t reduces independence of L/R}, a symmetric infinitesimal 2-braiding would already give us
\begin{equation}
(\gamma_{VU}\otimes1_W)\R_{VUW}(\gamma_{UV}\otimes1_W)=(1_U\otimes\,\gamma_{WV})\L_{UWV}(1_U\otimes\,\gamma_{VW}),
\end{equation}
hence, in contrast with the remarks after \cite[Defintion 17]{Joao}, including symmetry as a condition to the definition of coherency would make the definition entirely natural from the categorical perspective; the Breen polytope can only enforce \emph{one} equality at order $h^2$. Another way to think of this coherency condition, assuming symmetry and strictness, is simply by looking at the relevant pseudonatural transformation which is modified to 0, i.e., 
\begin{equation}\label{eq:canonical endomod}
\L+\R+\L_{132}:[t_{12}+t_{23}\,,t_{13}]+[t_{13}\,,t_{(13)2}]\Rrightarrow0\quad,
\end{equation}
however we can use the strictness and symmetry to notice that the domain pseudonatural transformation is already 0. In sum, for a symmetric strict infinitesimal 2-braiding, coherency means that the canonical endomodification on 0 is the particular identity modification $\ID_0=\mathbf{0}$.
\end{rem}
We can rewrite \eqref{eq:canonical endomod} purely in terms of the left 4-term relationator $\L$ by using the relation $\R=\L_{231}$ from Lemma \ref{lem:sym t reduces independence of L/R}, now the condition of coherency for a symmetric strict $t$ reads as
\begin{equation}\label{eq:coherency in terms of L}
\L+\L_{231}+\L_{132}=\mathbf{0}\quad.
\end{equation}
\begin{rem}\label{rem:our t is coherent}
Using the basis expression for $\L$ in \cite[Example 3.18]{Us}, \eqref{eq:coherency in terms of L} becomes
\begin{align}
&\langle\pi^{(2)},(-1)^{|s^{-1}\und{N}_q^{~r}||s^{-1}\und{L}_{s}^{~l}|}s^{-1}\dd_{\mathrm{dR}}\langle\pi^{(2)},s^{-1}\dd_{\mathrm{dR}}\und{M}_{i}^{~j}\otimes_As^{-1}\dd_{\mathrm{dR}}\und{L}_{s}^{~l}\rangle\otimes_As^{-1}\dd_{\mathrm{dR}}\und{N}_q^{~r}\\&+s^{-1}\dd_{\mathrm{dR}}\langle\pi^{(2)},s^{-1}\dd_{\mathrm{dR}}\und{M}_{i}^{~j}\otimes_As^{-1}\dd_{\mathrm{dR}}\und{N}_q^{~r}\rangle\otimes_As^{-1}\dd_{\mathrm{dR}}\und{L}_{s}^{~l}\nn\\&+(-1)^{|s^{-1}\und{M}_{i}^{~j}|+|u_r||s^{-1}\und{L}_{s}^{~l}|}s^{-1}\dd_{\mathrm{dR}}\und{M}_{i}^{~j}\otimes_As^{-1}\dd_{\mathrm{dR}}\langle\pi^{(2)},s^{-1}\dd_{\mathrm{dR}}\und{N}_q^{~r}\otimes_As^{-1}\dd_{\mathrm{dR}}\und{L}_{s}^{~l}\rangle\rangle w_j\otimes_Au_r\otimes_Av_l=0\,.\nn
\end{align}
To understand this requirement let us recall the following operations from \cite[Chapter 3]{Poisson}: of the de Rham differential (3.25), the internal product action of multiderivations on Kähler differentials (3.29) and  of the Schouten bracket between multiderivations (3.36), keeping in mind that in our case $A$ is a CDGA. In fact, looking at Cartan's formula as in \cite[Proposition 3.6]{Poisson} suggests that the above LHS relates to the interior product $\iota_{\left[\pi^{(2)},\pi^{(2)}\right]_{\mathrm{S}}}$ but, of course, the third weight component of the Maurer-Cartan equation\footnote{Now we can see that the definition of coherency is \textit{also} entirely natural from the perspective of Derived Algebraic Geometry.} \cite[(2.27)]{Us} then implies that this is proportional to $\iota_{\dd_{\widehat{\Pol}}(\pi^{(3)})}$. But this then means that the above LHS is the output of the inner hom differential $\partial$ of the degree $-2$ morphism which is constructed analogously to \cite[(3.25a)]{Us} but with three $\xi$'s and the dual pairing $\langle\pi^{(3)},\cdot\rangle$. Finally, the truncation then guarantees that the above requirement is satisfied and indeed our concrete infinitesimal 2-braiding is coherent. 
\end{rem}
\begin{propo}\label{propo: h^2 Breen polytope}
Given a totally symmetric strict infinitesimal 2-braiding $t$, \eqref{eq:assbrahex structure of order 3} satisfies the Breen polytope axiom \eqref{eqn:Breen axiom} if and only if $t$ is coherent.
\end{propo}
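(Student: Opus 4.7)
The plan is to expand the Breen polytope axiom \eqref{eqn:Breen axiom} order-by-order in $h$, using the ansatz \eqref{eq:assbrahex structure of order 3}, and show that the $h^2$ relation is exactly the coherency condition as rewritten in \eqref{eq:coherency in terms of L}. Since $\H^L,\H^R$ begin at order $h^2$ and the associator $\alpha=1+\tfrac{h^2}{24}[t_{12},t_{23}]$ has no $h^1$ term, the orders $h^0$ and $h^1$ collapse to two separate checks: at order $h^0$ the Breen polytope becomes the strict hexagonal Breen axiom \eqref{eqn:strict Breen axiom} for $\gamma$ (which holds because $\gamma$ has trivial homotopy components), while at order $h^1$ it reduces, exactly as in the proof of Theorem~\ref{theo:hexbrastrmon iff strBreen}, to the Breen condition \eqref{eq:Breen t} on $t$, which is guaranteed by Lemma~\ref{lem:total symmetry implies Breen t}.

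For the $h^2$ contribution, the first task is to unfold the two homotopy-component terms $\sigma_{1_U\,\boxtimes\,\sigma_{VW}}$ and $\sigma_{\sigma_{UV}\,\boxtimes\,1_W}$. Using $R$-linearity of homotopy components \eqref{eq:naturality of homotopy components} together with the expansion $\sigma_{VW}=\gamma_{VW}+\tfrac{h}{2}\gamma_{VW}t_{VW}+O(h^2)$ and the vertical-composition rule \eqref{eq:hom components of vercomp pseudos}, and then invoking total symmetry \eqref{left total symmetry}/\eqref{right total symmetry} to kill the leading-order pieces $t_{\gamma_{UV}\,\boxtimes\,1_W}=0=t_{1_U\,\boxtimes\,\gamma_{VW}}$, the splits-products formula \eqref{eqn:dubindex splits prods} yields
\begin{equation*}
\sigma_{\sigma_{UV}\,\boxtimes\,1_W}\big|_{h^2}=\tfrac{1}{4}\,\gamma_{W(VU)}(\gamma_{UV}\otimes 1_W)\,\L_{UVW}\,,\qquad \sigma_{1_U\,\boxtimes\,\sigma_{VW}}\big|_{h^2}=\tfrac{1}{4}\,\gamma_{(WV)U}(1_U\otimes\gamma_{VW})\,\R_{UVW}\,.
\end{equation*}
These are exactly the components that will feed the $\L$ and $\R$ terms appearing in the coherency condition \eqref{eq:coherency in terms of L}.

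Next I would collect the remaining $h^2$ contributions. The two hexagonator insertions on the LHS contribute $\tfrac{1}{24}\gamma_{U(VW)}\hh^L_{UVW}$ and $\tfrac{1}{24}\gamma_{U(WV)}\hh^L_{UWV}$ multiplied by $\gamma$-morphisms, and analogously on the RHS for $\hh^R_{VUW}$ and $\hh^R_{UVW}$. The cross terms of the form $\sigma^{(1)}\cdot\sigma^{(1)}$ produce expressions quadratic in $t$, which together with the commutator contributions $\tfrac{1}{24}[t_{12},t_{23}]$ from the $\alpha$ factors and the $\tfrac{h^2}{8}\gamma t^2$ pieces of $\sigma$ reorganise, exactly as in the proof of Lemma~\ref{lem:pent axiom upheld} and the analysis preceding \eqref{eqn:candidate left hexagonator}, into the domain/codomain pseudonatural transformations of the infinitesimal hexagonators. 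These quadratic-in-$t$ contributions will therefore cancel by strictness \eqref{eq:strict t notation}, leaving only the $\L$ and $\R$ parts multiplied by the $\gamma$ webs.

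After commuting the $\gamma$'s past the $\L/\R$ using the hexagon identity for $\gamma$ and its involutivity (as in the manipulations in Proposition~\ref{prop:SM3/4 hold}), the remaining equation is entirely in terms of the left and right $4$-term relationators with their indices permuted by $\gamma$-conjugation. Applying the index-swap identities from Lemma~\ref{lem:sym t reduces independence of L/R} to convert every $\R$ into an index-permuted $\L$, and then using the primitive coproduct formulae of Lemma~\ref{lem: L is primitive} to combine the $\hh^L$ and $\hh^R$ contributions ($\hh^L=2\L+\R$, $\hh^R=\L+2\R$), the whole relation should reduce to
\begin{equation*}
\L+\R+\L_{132}=\mathbf{0}\,,
\end{equation*}
which is precisely the reformulation of coherency given in \eqref{eq:coherency in terms of L} after applying $\R=\L_{231}$. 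The main obstacle is purely combinatorial: keeping track of the prefactors $\tfrac{1}{24},\tfrac{1}{4},\tfrac{1}{8}$ and the $\gamma$-conjugations so that, at the end, the coefficient of every surviving index-permutation of $\L$ comes out to $1$ rather than some other rational; this is where one must use, crucially, that the quadratic-in-$t$ terms cancel cleanly (which in turn is forced by total symmetry rendering $t_{\gamma\,\boxtimes\,1}$ and $t_{1\,\boxtimes\,\gamma}$ trivial).
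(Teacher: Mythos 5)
Your proof follows the paper's own argument essentially step for step: isolate the order-$h^2$ relation of \eqref{eqn:Breen axiom}, use $R$-linearity of the homotopy components, the product-splitting rule \eqref{eqn:dubindex splits prods} and total symmetry to reduce the two middle terms to $\tfrac14\gamma_{(VU)W}(\gamma_{UV}\otimes 1_W)\L_{UVW}$ and $\tfrac14\gamma_{U(WV)}(1_U\otimes\gamma_{VW})\R_{UVW}$, strip the $\gamma$'s via involutivity and the hexagon identity, and land on $\L+\R+\L_{132}=\mathbf{0}$, which is coherency in the form \eqref{eq:coherency in terms of L}. A few slips to correct before writing it up: the $\gamma$-subscripts in your two displayed formulas are transposed (the whiskered symmetric-braiding components sit at the targets, i.e.\ $\gamma_{(VU)W}$ and $\gamma_{U(WV)}$); the ``cross terms'' your third paragraph labours to cancel are in fact absent at order $h^2$, since every $\alpha^{(2)}$ or $\sigma^{(1)}$ cochain factor multiplies either a hexagonator component (pushing the product to order $h^3$) or the vanishing homotopy component of $\gamma$; the primitive coproduct formulae of Lemma~\ref{lem: L is primitive} play no role here, as all surviving terms carry three single-object indices; and the index swaps actually needed, $\R_{132}=\R$ and $\L_{213}=\L$, come from Lemma~\ref{lem:totalsym further reduces independence of L/R} under total symmetry rather than from the weaker Lemma~\ref{lem:sym t reduces independence of L/R}.
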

\begin{proof}
The order $h^2$ relation of \eqref{eqn:Breen axiom} is given as
\begin{subequations}
\begin{align}
&-(\gamma_{VW}\otimes1_U)\left(\frac{1}{24}\gamma_{1(23)}\hh^L\right)_{UVW}+\frac{1}{2}(\gamma t)_{1_U\,\boxtimes \,\frac{1}{2}\gamma_{VW}t_{VW}}+\left(\frac{1}{24}\gamma_{1(23)}\hh^L\right)_{UWV}(1_U\otimes\gamma_{VW})\nn\\&=\\&-\left(\frac{1}{24}\gamma_{(12)3}\hh^R\right)_{VUW}(\gamma_{UV}\otimes1_W)-\frac{1}{2}(\gamma t)_{\frac{1}{2}\gamma_{UV}t_{UV}\,\boxtimes \,1_W}+(1_W\otimes\gamma_{UV})\left(\frac{1}{24}\gamma_{(12)3}\hh^R\right)_{UVW}\nn
\end{align}
which we rewrite,
\begin{align}
&(\gamma_{VW}\otimes1_U)\gamma_{U(VW)}\hh^L_{UVW}-6\gamma_{U(WV)}t_{1_U\,\boxtimes \,\gamma_{VW}t_{VW}}-\gamma_{U(WV)}\hh^L_{UWV}(1_U\otimes\gamma_{VW})\nn\\&=\\&\gamma_{(VU)W}\hh^R_{VUW}(\gamma_{UV}\otimes1_W)+6\gamma_{(VU)W}t_{\gamma_{UV}t_{UV}\,\boxtimes \,1_W}-(1_W\otimes\,\gamma_{UV})\gamma_{(UV)W}\hh^R_{UVW}\quad.\nn
\end{align}
Now we simultaneously use: the fact that the homotopy components of a pseudonatural transformation splits products \eqref{eqn:dubindex splits prods}, the total symmetry of $t$ and the hexagon axiom that $\gamma$ satisfies,
\begin{equation}
\hh^L_{UVW}-6\R_{UVW}-(1_U\otimes\,\gamma_{WV})\hh^L_{UWV}(1_U\otimes\,\gamma_{VW})=(\gamma_{VU}\otimes1_W)\hh^R_{VUW}(\gamma_{UV}\otimes1_W)+6\L_{UVW}-\hh^R_{UVW}\,.
\end{equation}
We use the definitions of the infinitesimal hexagonators \eqref{eq:inf hexagonators} and the subscript index notation,
\begin{equation}
2\L+\R-6\R-2\L_{132}-\R_{132}=\L_{213}+2\R_{213}+6\L-\L-2\R\quad.
\end{equation}
As in Lemma \ref{lem:totalsym further reduces independence of L/R}, total symmetry of $t$ gives us: $\R_{213}=\L_{132}$, $\R_{132}=\R$ and $\L_{213}=\L$. We thus rewrite the above as 
\begin{equation}
4\L+4\R+4\L_{132}=0
\end{equation}
and this is equivalent to the condition of coherency for a symmetric $t$.
\end{subequations}
\end{proof}
\subsection{The second-order coboundary symmetric syllepsis}
So far, this section has shown that a totally symmetric strict infinitesimal 2-braiding $t$ provides a second-order deformation of the symmetric strict monoidal $\Ch_\bbK^{[-1,0]}$-category $(\CC,\otimes,I,\gamma)$ to a braided pentagonal strictly-unital monoidal $\Ch_{\bbK[h]/(h^3)}^{[-1,0]}$-category \eqref{eq:assbrahex structure of order 3} if and only if $t$ is coherent. 
\begin{rem}\label{rem:h^2 coboundary syllepsis}
We now wish to show that the coboundary symmetric syllepsis $\Sigma:=h\gamma T$ from Remark \ref{rem:coboundary 2-shifted induces syllep} is actually a symmetric syllepsis on the above braided pentagonal strictly-unital monoidal $\Ch_{\bbK[h]/(h^3)}^{[-1,0]}$-category. First let us mention that \eqref{eq:syllepsis is homotopy} and \eqref{eq:syllepsis is quasinatural} still hold to order $h^2$ because of the symmetry of $t$ and because $\Sigma$ has no order $h^2$ term. The order $h^2$ relation of \eqref{eq:left factorisation} reads as 
\begin{subequations}
\begin{equation}\label{eq:h^2 left factorisation}
2\L+\R=-(\L_{231}+2\R_{231})+12t_{13}T_{12}-12T_{13}t_{12}\quad,
\end{equation}
where we used Remark \ref{rem:inverse isomodification} for the inverse $(\H^R_{231})^{-1}$. The order $h^2$ relation of \eqref{eq:right factorisation} reads as 
\begin{equation}\label{eq:h^2 right factorisation}
\L+2\R=-(2\L_{312}+\R_{312})+12t_{13}T_{23}-12T_{13}t_{23}\qquad.
\end{equation}
Lemma \ref{lem:totalsym further reduces independence of L/R} gives us: $\L_{231}=\R,\,\,\R_{312}=\L$ and $\R_{231}=\L_{312}=\L_{132}\,$, and coherency gives us $\L_{132}=-(\L+\R)$ hence \eqref{eq:h^2 left factorisation} and \eqref{eq:h^2 right factorisation} become, respectively:
\begin{equation}\label{eq:h^2 factorisation}
t_{13}T_{12}=T_{13}t_{12}\qquad,\qquad t_{13}T_{23}=T_{13}t_{23}\quad.
\end{equation}
Likewise, the order $h^2$ relation of \eqref{eq:symmetric syllepsis} reads as 
\begin{equation}\label{eq:h^2 symmetry of syllep}
[t,T]=0\quad.
\end{equation}
Using the truncation of the hom-complexes together with the fact that $T:t\Rrightarrow0$ (hence Definition \ref{def:coboundary of mod} allows us to write $\partial T=t$), we have 
\begin{equation}
t_{13}T_{12}=(\partial T_{13})T_{12}=T_{13}\partial T_{12}=T_{13}t_{12}\quad.
\end{equation}
The other relation in \eqref{eq:h^2 factorisation} and the relation \eqref{eq:h^2 symmetry of syllep} are proved similarly.
\end{subequations}
\end{rem}

\section*{Acknowledgments}
I am supported by an EPSRC doctoral studentship. I would like to thank Alexander Schenkel and Robert Laugwitz for: teaching me the general idea of the definition of a braided monoidal bicategory in \cite[Appendix C]{Schommer}, providing helpful remarks as regards the order of material in Subsection \ref{subsec:4T relationators} and motivating the belief that 3-shifted Poisson structures should induce syllepses. I would also like to thank João Faria Martins for a helpful discussion circa beginning of January.

\appendix
\section{Third-order deformation data}\label{app:Third-order data}
Throughout this appendix, we \textit{must} take $\bbK=\bbC$ because the coefficient of the third-order term of the Drinfeld associator series in \cite[Corollary XIX.6.5]{Kassel}, 
\begin{equation}\label{eq:Drinfeld 3rd order with h}
\alpha:\equiv\Phi_\KZ(t_{12},t_{23})\equiv1+\frac{h^2}{24}[t_{12},t_{23}]+\frac{h^3\zeta(3)}{8\pi^3i}\big[t_{12}+t_{23},[t_{12},t_{23}]\big]+\ldots\quad,
\end{equation}
is imaginary and irrational (and possibly transcendental). To simplify the third-order term, we take $h:=2\pi i\hbar$ hence \eqref{eq:Drinfeld 3rd order with h} becomes
\begin{equation}\label{eq:Drinfeld 3rd order with hbar}
\Phi(t_{12},t_{23})\equiv1-\frac{\hbar^2\pi^2}{6}[t_{12},t_{23}]+\hbar^3\zeta(3)\big[[t_{12},t_{23}],t_{12}+t_{23}\big]+\ldots
\end{equation}

\subsection{The pentagonator}\label{appsubsec:pentagonator}
We substitute \eqref{eq:Drinfeld 3rd order with hbar} into \eqref{eq:index pentagonator} and read off the third-order terms,
\begin{align}
\frac{1}{\zeta(3)}(\Pi)_3:\big[[t_{23},t_{34}],t_{23}+t_{34}\big]+&\big[[t_{1(23)},t_{(23)4}],t_{1(23)}+t_{(23)4}\big]+\big[[t_{12},t_{23}],t_{12}+t_{23}\big]\\&\Rrightarrow\big[[t_{12},t_{2(34)}],t_{12}+t_{2(34)}\big]+\big[[t_{(12)3},t_{34}],t_{(12)3}+t_{34}\big]\,.\nn
\end{align}
Let us assume strictness of $t$ and write\footnote{Throughout this subsection, we will make use of the fact that $$[t_{ij},t_{kl}]=0$$for $\{i,j,k,l\}=\{1,2,3,4\}$. Recall that 2 (out of 3, for a symmetric $t$) of these ``disjoint-commuting" relations was demonstrated in the proof of Lemma \ref{lem:pent axiom upheld}.}:
\begin{alignat}{3}
\big[[t_{12},t_{2(34)}],t_{12}+t_{2(34)}\big]&=\big[[t_{12},t_{23}],t_{12}+t_{23}\big]+\big[[t_{12},t_{23}],t_{24}\big]+\big[[t_{12},t_{24}],t_{12}+t_{23}+t_{24}\big]\,&&,\\
\big[[t_{(12)3},t_{34}],t_{(12)3}+t_{34}\big]&=\big[[t_{23},t_{34}],t_{23}+t_{34}\big]+\big[[t_{23},t_{34}],t_{13}\big]+\big[[t_{13},t_{34}],t_{13}+t_{23}+t_{34}\big]\,&&,\\
\big[[t_{1(23)},t_{(23)4}],t_{1(23)}+t_{(23)4}\big]&=\big[[t_{12},t_{24}]+[t_{13},t_{34}],t_{12}+t_{13}+t_{24}+t_{34}\big]\qquad&&.
\end{alignat}
Now we balance terms as usual,
\begin{equation}
\frac{1}{\zeta(3)}(\widehat{\Pi})_3:\big[t_{13},[t_{23},t_{34}]\big]-\big[t_{24},[t_{23},t_{12}]\big]+\big[[t_{13},t_{34}],t_{12}+t_{24}-t_{23}\big]+\big[[t_{12},t_{24}],t_{13}+t_{34}-t_{23}\big]\Rrightarrow0\,.
\end{equation}
With a view to four-term relationators, we make use of strictness of $t$ together with the disjoint-commuting relations to rewrite the above as
\begin{equation}
\big[t_{13},[t_{23},t_{34}]\big]-\big[t_{24},[t_{23},t_{12}]\big]+\big[t_{23},[t_{1(23)},t_{(23)4}]\big]+\big[[t_{13},t_{34}],t_{12}+t_{24}\big]+\big[[t_{12},t_{24}],t_{13}+t_{34}\big]\Rrightarrow0\,.
\end{equation}
\begin{enumerate}
\item[(i)] The first two terms give us 
\begin{align}
\big[t_{13},[t_{23},t_{34}]\big]-\big[t_{24},[t_{23},t_{12}]\big]&=\big[t_{13},[t_{23},t_{(23)4}-t_{24}]\big]-\big[t_{24},[t_{23},t_{1(23)}-t_{13}]\big]\nn\\&=\big[t_{13},\partial\L_{234}\big]-\big[t_{24},\partial\R_{123}\big]-\big[t_{13},[t_{23},t_{24}]\big]+\big[t_{24},[t_{23},t_{13}]\big]\nn\\&=\partial\left(\big[t_{13},\L_{234}\big]-\big[t_{24},\R_{123}\big]\right)\quad,
\end{align}
where we used the Jacobi identity to eliminate the last two terms of the second equality.
\item[(ii)] The third term straightforwardly gives us
\begin{equation}
\big[[t_{23},t_{1(23)}],t_{(23)4}\big]+\big[t_{1(23)},[t_{23},t_{(23)4}]\big]=\partial\left([t_{12}+t_{13},\L_{234}]-[t_{24}+t_{34},\R_{123}]\right)
\end{equation}
\item[(iii)] The fourth and the fifth term $\big[[t_{13},t_{34}],t_{12}+t_{24}\big]+\big[[t_{12},t_{24}],t_{13}+t_{34}\big]$ becomes 
\begin{align}
=&\big[t_{34},[t_{12},t_{13}]\big]+\big[t_{13},[t_{34},t_{24}]\big]-\big[t_{24},[t_{12},t_{13}]\big]-\big[t_{12},[t_{34},t_{24}]\big]\nn\\=&\big[t_{34},[t_{12},t_{(12)3}-t_{23}]\big]+\big[t_{13},[t_{34},t_{2(34)}-t_{23}]\big]-\big[t_{24},[t_{12},t_{(12)3}-t_{23}]\big]-\big[t_{12},[t_{34},t_{2(34)}-t_{23}]\big]\nn\\=&\partial\left(\big[t_{34},\L_{123}\big]+\big[t_{13},\R_{234}\big]-[t_{24},\L_{123}]-[t_{12},\R_{234}]\right)-\big[t_{34},[t_{12},t_{23}]\big]-\big[t_{13},[t_{34},t_{23}]\big]\nn\\&+\big[t_{24},[t_{12},t_{23}]\big]+\big[t_{12},[t_{34},t_{23}]\big]
\end{align}
and the remaining terms collect as
\begin{align}
&\big[t_{24},[t_{12},t_{23}]\big]+\big[t_{12},[t_{34},t_{23}]\big]-\big[t_{34},[t_{12},t_{23}]\big]-\big[t_{13},[t_{34},t_{23}]\big]\nn\\&=\big[t_{13},[t_{23},t_{34}]\big]-\big[t_{24},[t_{23},t_{12}]\big]\nn\\&=\partial\left(\big[t_{13},\L_{234}\big]-\big[t_{24},\R_{123}\big]\right)-\big[t_{13},[t_{23},t_{24}]\big]+\big[t_{24},[t_{23},t_{13}]\big]\nn\\&=\partial\left(\big[t_{13},\L_{234}\big]-\big[t_{24},\R_{123}\big]\right)\quad.
\end{align}
\end{enumerate}
Putting it all together, the pentagonator series (modulo $\hbar^4$) is given by
\begin{equation}\label{eq:third-order of pentagonator}
\Pi=\hbar^3\zeta(3)\left(\big[t_{34}-t_{24},\L_{123}\big]-[3t_{24}+t_{34},\R_{123}]+[t_{12}+3t_{13},\L_{234}]+\big[t_{13}-t_{12},\R_{234}\big]\right)
\end{equation}

\subsection{The right pre-hexagonator}
We compute the third-order term of the right pre-hexagonator \eqref{eq:symstr right pre-hex},
\begin{align}
(\RR)_3:&\zeta(3)\left(\big[[t_{12},t_{13}],t_{1(23)}\big]+\big[[t_{23},t_{12}],t_{12}+t_{23}\big]\right)-\frac{\pi^3i}{6}\left([t_{12},t_{13}]t_{(12)3}+t_{(12)3}[t_{23},t_{12}]+t_{(12)3}^3\right)\nn\\
&\Rrightarrow\zeta(3)\big[[t_{23},t_{13}],t_{(12)3}\big]-\frac{\pi^3i}{6}\left([t_{23},t_{13}]t_{23}+t_{13}[t_{23},t_{13}]+t_{13}^3+t_{23}^3+3t_{13}^2t_{23}+3t_{13}t^2_{23}\right)\,.
\end{align}
Balancing terms, we get
\begin{align}
&\zeta(3)\left(\big[[t_{12},t_{13}],t_{1(23)}\big]-\big[[t_{23},t_{13}],t_{(12)3}\big]+\big[[t_{23},t_{12}],t_{12}+t_{23}\big]\right)\nn\\&-\frac{\pi^3i}{6}\left([t_{23},t_{13}^2]+[t_{23}^2,t_{13}]+[t_{12},t_{13}]t_{(12)3}+t_{(12)3}[t_{23},t_{12}]\right)\Rrightarrow0\quad.
\end{align}
We rewrite this as
\begin{align}
&\zeta(3)\left(\big[t_{(12)3},[t_{23},t_{13}]\big]-\big[t_{1(23)},[t_{12},t_{13}]\big]+\big[t_{23},[t_{12},t_{23}]\big]-\big[t_{12},[t_{23},t_{12}]\big]\right)\nn\\&-\frac{\pi^3i}{6}\left([t_{12}+t_{23},t_{13}]t_{(12)3}+t_{(12)3}[t_{23},t_{1(23)}]\right)\Rrightarrow0\quad.
\end{align}
The terms with coefficient $-\frac{\pi^3i}{6}$ can be simply written as,
\begin{equation}
\left([t_{12},t_{(12)3}]+[t_{23},t_{1(23)}]\right)t_{(12)3}+t_{(12)3}[t_{23},t_{1(23)}]=\left(\partial\L+\partial\R\right)t_{(12)3}+t_{(12)3}\partial\R\quad.
\end{equation}
The coefficient of $\zeta(3)$ takes a bit more work,
\begin{align}
&\big[t_{(12)3},[t_{23},t_{1(23)}-t_{12}]\big]-\big[t_{1(23)},[t_{12},t_{(12)3}-t_{23}]\big]+\big[t_{23},[t_{12},t_{23}]\big]-\big[t_{12},[t_{23},t_{12}]\big]\nn\\&=[t_{(12)3},\partial\R]-\big[t_{(12)3},[t_{23},t_{12}]\big]-[t_{1(23)},\partial\L]+\big[t_{1(23)},[t_{12},t_{23}]\big]+\big[t_{23},[t_{12},t_{23}]\big]-\big[t_{12},[t_{23},t_{12}]\big]\nn\\&=[t_{(12)3},\partial\R]-[t_{1(23)},\partial\L]-\big[[t_{(12)3},t_{23}],t_{12}\big]-\big[t_{23},[t_{(12)3},t_{12}]\big]+\big[[t_{1(23)},t_{12}],t_{23}\big]+\big[t_{12},[t_{1(23)},t_{23}]\big]\nn\\&\qquad+\big[t_{23},[t_{12},t_{23}]\big]-\big[t_{12},[t_{23},t_{12}]\big]\nn\\&=[t_{(12)3},\partial\R]-[t_{1(23)},\partial\L]-\big[2t_{12},[t_{23},t_{1(23)}]\big]+\big[2t_{23},[t_{12},t_{(12)3}]\big]\nn\\&=[t_{(12)3}-2t_{12},\partial\R]+[2t_{23}-t_{1(23)},\partial\L]
\end{align}
In sum, we have the following expression for the right pre-hexagonator series $\RR$ modulo $\hbar^4$,
\begin{equation}\label{eq:3rd order right pre-hex}
\RR=-\frac{\pi^2\hbar^2}{6}(\L+2\R)+\hbar^3\left[\zeta(3)\big([t_{(12)3}-2t_{12},\R]+[2t_{23}-t_{1(23)},\L]\big)-\frac{\pi^3i}{6}\big(\left(\L+\R\right)t_{(12)3}+t_{(12)3}\R\big)\right]\,.
\end{equation}

\subsection{Higher-order coboundary syllepses}
This subsection follows the theme of Remarks \ref{rem:coboundary 2-shifted induces syllep} and \ref{rem:h^2 coboundary syllepsis}.
\begin{defi}
Given a coherent totally symmetric strict $t$ and the relevant monoidal data, 
\begin{equation}
\left(\CC[[\hbar]]\,,\,\otimes\,,\,\alpha:\equiv\Phi(t_{12},t_{23})\,,\,\sigma:\equiv\gamma e^{\pi i\hbar t}\,,\,\H^L:=\gamma_{1(23)}\LL\,,\,\H^R:=\gamma_{(12)3}\RR\right)\quad,
\end{equation}
if we have a syllepsis $\Sigma:=\gamma\Gamma:\gamma e^{\pi i\hbar t}\Rrightarrow\gamma e^{-\pi i\hbar t}$, then we call $\Gamma:e^{\pi i\hbar t}\Rrightarrow e^{-\pi i\hbar t}$ a \textbf{pre-syllepsis}.
\end{defi}
We wish to strip away instances of $\gamma$ in the factorisation axioms \eqref{eq:left factorisation}/\eqref{eq:right factorisation} that a syllepsis must satisfy. We will also do the same for the symmetry property \eqref{eq:symmetric syllepsis}. 
\begin{constr}
Let us treat the right factorisation axiom \eqref{eq:right factorisation},
\begin{equation}
\H^R=\alpha^{-1}_{312}\Sigma_{(12)3}\alpha^{-1}+(\H^L)^{-1}_{312}-\sigma^{-1}_{31}\alpha^{-1}_{132}\Sigma_{23}-\Sigma_{13}\alpha^{-1}_{132}\sigma_{23}\quad.
\end{equation}
We use the formula in Remark \ref{rem:inverse isomodification} to write the inverse of the left hexagonator \eqref{eq:index left hexagonator} as
\begin{equation}
(\H^L)^{-1}=-\alpha^{-1}\sigma_{1(23)}^{-1}\alpha^{-1}_{231}\H^L\sigma^{-1}_{12}\alpha^{-1}_{213}\sigma^{-1}_{13}\quad\implies\quad(\H^L)^{-1}_{312}=-\alpha^{-1}_{312}\sigma_{3(12)}^{-1}\alpha^{-1}\H^L_{312}\sigma^{-1}_{31}\alpha^{-1}_{132}\sigma^{-1}_{32}\,.
\end{equation}
Hence, we arrive at the following index equation
\begin{align}
\RR=\Phi(t_{12},t_{13})\Gamma_{(12)3}\Phi(t_{23},t_{12})&-\Phi(t_{12},t_{13})e^{-\pi i\hbar t_{(12)3}}\Phi(t_{23},t_{12})\RR_{213}e^{-\pi i\hbar t_{13}}\Phi(t_{23},t_{13})e^{-\pi i\hbar t_{23}}\nn\\&-e^{-\pi i\hbar t_{13}}\Phi(t_{23},t_{13})\Gamma_{23}-\Gamma_{13}\Phi(t_{23},t_{13})e^{\pi i\hbar t_{23}}\qquad.\label{eq:index right factorisation}
\end{align}
Analogous arguments show the left factorisation axiom \eqref{eq:left factorisation} to yield 
\begin{align}
\RR_{321}=\Phi(t_{23},t_{13})\Gamma_{1(23)}\Phi(t_{12},t_{23})&-\Phi(t_{23},t_{13})e^{-\pi i\hbar t_{1(23)}}\Phi(t_{12},t_{23})\RR_{231}e^{-\pi i\hbar t_{13}}\Phi(t_{12},t_{13})e^{-\pi i\hbar t_{12}}\nn\\&-e^{-\pi i\hbar t_{13}}\Phi(t_{12},t_{13})\Gamma_{12}-\Gamma_{13}\Phi(t_{12},t_{13})e^{\pi i\hbar t_{12}}\qquad.\label{eq:index left factorisation}
\end{align}
Finally, the symmetry property \eqref{eq:symmetric syllepsis} simply translates as
\begin{equation}\label{eq:index symmetric syllepsis}
\Gamma=e^{-\pi i\hbar t}\Gamma_{21}e^{\pi i\hbar t}=e^{-\pi i\hbar\ad_t}\left(\Gamma_{21}\right)\qquad.
\end{equation}
\end{constr}
\begin{rem}
Recalling the coboundary infinitesimal syllepsis $T:t\Rrightarrow0$ of Remark \ref{rem:coboundary 2-shifted induces syllep}, we consider the candidate pre-syllepsis
\begin{equation}\label{eq:candidate pre-syllepsis}
\Gamma=2\pi i\hbar\sum_{m=0}^\infty\frac{(\pi i\hbar t)^{2m}}{(2m+1)!}T\qquad.
\end{equation}
Remark \ref{rem:coboundary 2-shifted induces syllep} already tells us that $T$ satisfies:
\begin{equation}
T_{1(23)}=T_{12}+T_{13}\qquad,\qquad T_{(12)3}=T_{23}+T_{13}\qquad,\qquad T_{12}=T_{21}\quad.
\end{equation}
In particular, the symmetry of the coboundary infinitesimal syllepsis $T_{12}=T_{21}$ and that of the infinitesimal 2-braiding $t_{12}=t_{21}$ implies $\Gamma_{12}=\Gamma_{21}$ thus simplifying \eqref{eq:index symmetric syllepsis} to
\begin{equation}\label{eq:Gamma invariant under ad of t}
\Gamma=e^{-\pi i\hbar\ad_t}\left(\Gamma\right)\quad.
\end{equation}
Now \eqref{eq:h^2 symmetry of syllep} reduces the all-orders symmetry relation \eqref{eq:Gamma invariant under ad of t} to the tautology $\Gamma=\Gamma$ hence the only relevant relations going forward are the factorisation relations \eqref{eq:index right factorisation}/\eqref{eq:index left factorisation}.
\end{rem}

We now wish to show that
\begin{equation}
\Gamma=2\pi i\hbar T-\frac{\pi^3i\hbar^3}{3}t^2T
\end{equation}
satisfies the factorisation relations \eqref{eq:index right factorisation}/\eqref{eq:index left factorisation} required of a symmetric pre-syllepsis on the braided strictly-unital monoidal $\Ch_{\bbC[\hbar]/(\hbar^4)}^{[-1,0]}$-category $\left(\CC_{\bbC[\hbar]/(\hbar^4)},\otimes,I,\alpha,\Pi,\sigma,\H^L,\H^R\right)$. At order $\hbar^3$, the right factorisation relation \eqref{eq:index right factorisation} reads as
\begin{align}
&\zeta(3)\big([t_{(12)3}-2t_{12},\R]+[2t_{23}-t_{1(23)},\L]\big)-\frac{\pi^3i}{6}\big(\left(\L+\R\right)t_{(12)3}+t_{(12)3}\R\big)\\
&=-\frac{\pi^3i}{3}t^2_{(12)3}T_{(12)3}-\frac{\pi^3i}{3}[t_{12},t_{13}]T_{(12)3}-\frac{\pi^3i}{3}T_{(12)3}[t_{23},t_{12}]\nn\\&\quad-\zeta(3)\big([t_{(12)3}-2t_{12},\R_{213}]+[2t_{13}-t_{2(13)},\L_{213}]\big)+\frac{\pi^3i}{6}\big(\left(\L_{213}+\R_{213}\right)t_{(12)3}+t_{(12)3}\R_{213}\big)\nn\\&\quad-\frac{\pi^3i}{6}\big(\left(\L_{213}+2\R_{213}\right)t_{(12)3}+t_{(12)3}\left(\L_{213}+2\R_{213}\right)\big)\nn\\&\quad+
\frac{\pi^3i}{3}t^2_{23}T_{23}+\frac{\pi^3i}{3}[t_{23},t_{13}]T_{23}+\pi^3it^2_{13}T_{23}+
\frac{\pi^3i}{3}t^2_{13}T_{13}+\frac{\pi^3i}{3}T_{13}[t_{23},t_{13}]+\pi^3iT_{13}t^2_{23}\quad.\nn
\end{align}
Coherency of $t$ gives us $\R_{213}=-(\L+\R)$ and total symmetry gives us $\L_{213}=\L$ which leaves 
\begin{equation}
\frac{\pi^3i}{3}\big(\left(\L+\R\right)t_{(12)3}+t_{(12)3}\R\big)=\frac{\pi^3i}{3}\big([t_{12}+t_{23},t_{13}]T_{(12)3}+T_{(12)3}[t_{23},t_{1(23)}]\big)
\end{equation}
on the RHS. Using the strictness of $t$ and $T$ together with the $2^\mathrm{nd}$-order relations \eqref{eq:h^2 factorisation}/\eqref{eq:h^2 symmetry of syllep} guarantees the above $3^\mathrm{rd}$-order relation. The left factorisation relation \eqref{eq:index left factorisation} is proved likewise.


\end{document}